\newcommand{\lam}{\mathcal{L}}
\newcommand\blfootnote[1]{
  \begingroup
  \renewcommand\thefootnote{}\footnote{#1}
  \addtocounter{footnote}{-1}
  \endgroup
}
\newtheorem{thm}{Theorem}[section]
\newtheorem{theorem}[thm]{Theorem}
\newtheorem{lemma}[thm]{Lemma}
\newtheorem{proposition}[thm]{Proposition}
\newtheorem{cor}[thm]{Corollary}
\newtheorem{prop}[thm]{Proposition}
\theoremstyle{definition}
\newtheorem{definition}[thm]{Definition}
\newtheorem{remark}[thm]{Remark}
\newtheorem{conj}[thm]{Conjecture}
\newtheorem{ex}[thm]{Example}
\title{Finite Dynamical Laminations}
\author{Forrest M. Hilton}
\begin{document}

\maketitle
\blfootnote{This work was supported by a UAB Honors College Presidential Honors Fellowship, and mostly written in 2024. The author's research advisor was Dr. John C. Mayer. Some revisions were made under the advice of Dr. Lex G. Oversteegen.}
\begin{abstract}
	We develop several combinatorial notions about laminations, some with clear implications for parameter space. We introduce a simplified class of laminations called finite dynamical laminations (FDL). In order to count FDL, we introduce sibling portraits, of which we provide a comprehensive counting theorem. We provide a characterization of which periodic polygons appear in invariant laminations. We introduce the pullback tree. The base of the pullback tree is a set of laminations, and we show that those laminations are proper and invariant and all laminations in the base of the pullback tree correspond to a polynomial. We define the generational FDL graph, and it provides combinatorial information about polynomial parameter space.
\end{abstract}
\section{Introduction}

A lamination is a model of a polynomial. Here a polynomial is a complex polynomial. We provide some exposition now, but most background is withheld until needed.

\begin{definition}
	A \emph{lamination} is a closed set of chords of the unit disk that do not cross each other except at the endpoints that additionally includes every point of the circle. A \emph{leaf} is a chord in a lamination joining 2 different points, while a \emph{degenerate leaf} is a point of the circle.
\end{definition}

Our convention is that a degenerate leaf is not a leaf. The equivalence relation of a lamination is the minimal equivalence relation such that both endpoints of every leaf are in the same equivalence class. Many Julia sets of polynomials are homeomorphic to the quotient of the circle with a lamination. Moreover, it is long known how to find laminations that serve as dynamical models of a Julia set.

The angle on the circle is measured in turns, thus $\theta \in [0,1)$. Given that we want to model degree $d$ polynomial, the function $\sigma_d(\theta) = \theta d$ mod $1$ defines the dynamics on the circle. Our notation for angles omits the decimal/radix point as implied since all numbers are less than one. We write numbers in base $d$, and the repeating part of a rational number is offset by ``\_'' instead of the traditional overline. Thus, $0\_001 = 0.0\overline{001}_d$.

We often apply $\sigma_d$ to leaves or arcs of the circle. The leaf from $a$ to $b$ is denoted $\overline{ab}$. The motion of a leaf is according to its endpoints: $\sigma_d(\overline{ab})= \overline{\sigma_d(a)\sigma_d(b)}$. For an arc of the circle, $A$, we define $\sigma_d(A)$ as the arc containing $\sigma_d(\theta)$for all $ \theta \in A$. For regions of the disk bounded by arcs and leaves, the image is the region bounded by the image of each part of the boundary. Since the Julia set is $d$ to 1 invariant under the polynomial, we model it with laminations that are similarly invariant under $\sigma_d$. We provide the most modern notion of an invariant lamination \cite{blokh2012laminationslanguageleaves}:

\begin{definition}
	A lamination $\lam$ is {\em sibling $d$-invariant} or simply {\em $d$-invariant} if:

	\begin{enumerate}

		\item for each $\ell\in\mathcal{L}$ either
		      $\sigma_d(\ell)\in\mathcal{L}$ or $\sigma_d(\ell)$ is a point
		      in $\mathbb{S}$,

		\item for each $\ell\in\mathcal{L}$ there exists a leaf
		      $\ell'\in\mathcal{L}$  such that $\sigma_d(\ell')=\ell$,

		\item \label{disjoint} for each $\ell\in\mathcal{L}$ such that
		      $\sigma_d(\ell)$ is not a point, there exist $\mathbf d$ {\bf
				      disjoint} leaves $\ell_1, \dots, \ell_d$ in $\mathcal{L}$ such
		      that $\ell=\ell_1$ and $\sigma_d(\ell_i) = \sigma_d(\ell)$ for
		      all $i$.

	\end{enumerate}
\end{definition}

Next, we need to connect these laminations with polynomials. The basin of attraction of infinity, $B_\infty$, is the set of points having an unbounded forward orbit under the polynomial. The Julia set is the boundary of the basin of attraction of infinity. Assume for the moment that the Julia set is connected. The polynomial restricted to its basin of attraction of infinity is conjugate to $z^d$ restricted to the complement of the unit disk. We define $\Psi$ as the conjugating function. To clarify, we define $\Psi$ such that the following diagram commutes, $\Psi$ is a homeomorphism, and $\lim_{z\to\infty} \frac{\Psi(z)}{z} = 1$.
\begin{center}

	\includegraphics[width=2in]{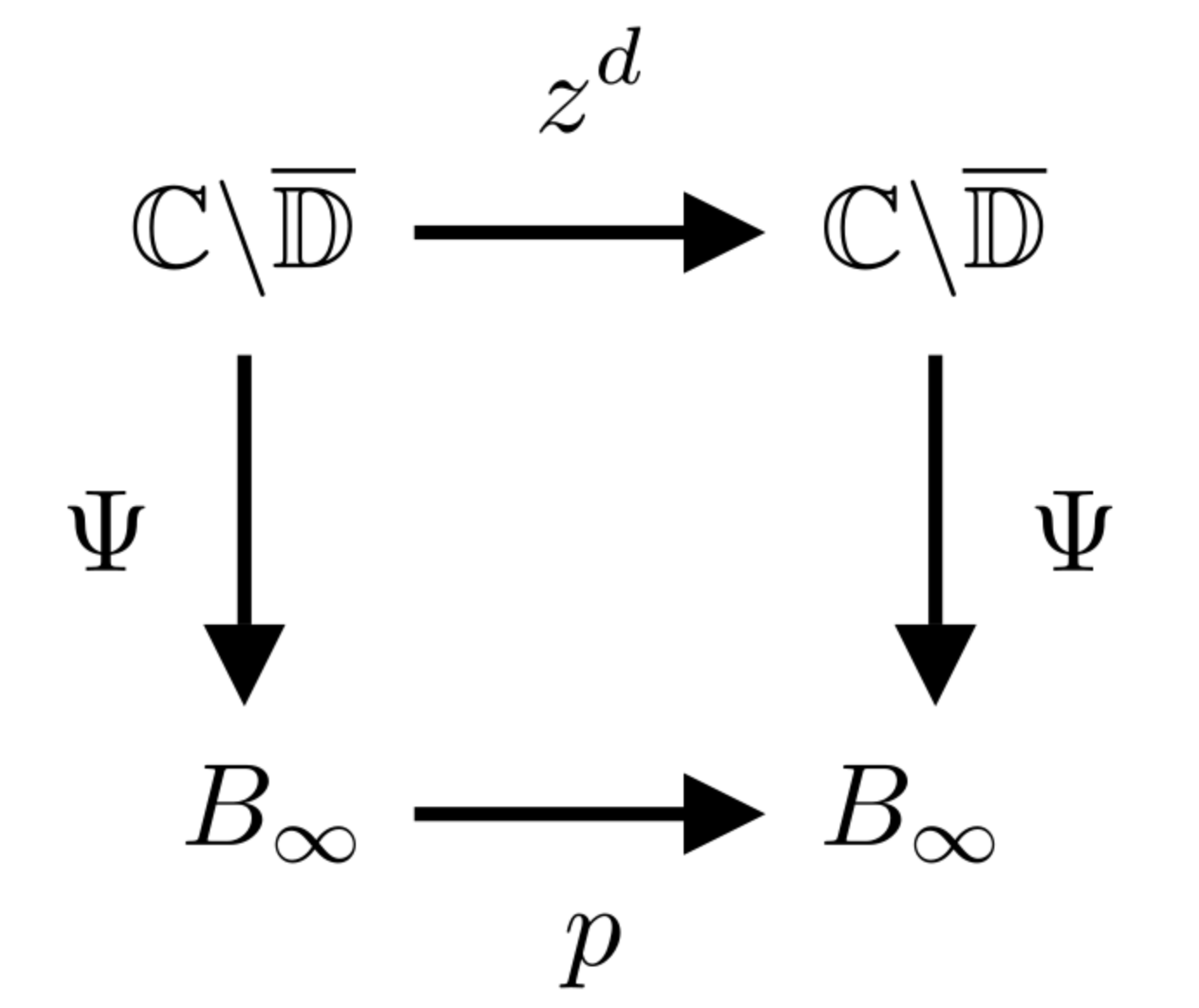}
\end{center}

The dynamical ray is the image of the straight ray under $\Psi$. The dynamic ray lands at a point in the Julia set. The lamination of a polynomial has the equivalence relation such that two angles are in the same class iff their dynamical rays land at the same point. This explanation is illustrated in a 5-minute video \cite{Hilton_Sirna_2023}. Dropping the assumption that the Julia set is connected and locally connected, a dynamical ray can be defined based on a potential function around the filled Julia set \cite{Orsay}, but it is no longer the case that all rays land, and it is harder to associate laminations with polynomials.

Laminations have been used to develop an exceedingly powerful and detailed model of the Mandelbrot set. However, it is much harder in higher degrees. In the case of cubics, there is uncertainty about the shape of the set of polynomials associated with the empty lamination. Indeed, the sheer dimension of the set of polynomials makes it difficult to understand the relationships between the polynomials of invariant laminations. Therefore, we introduce a set of simple laminations with a straightforward connection to parameter space, and we aim for a less detailed description of parameter space.

This document explores the uses of finite laminations for studying polynomials. \cref{CCP} introduces the terminology for discussing such laminations and proves which finite laminations are part of an invariant lamination. In \cref{CSP}, we count sibling portraits, which we introduced in \cref{CCP}. In \cref{FDL}, we introduce Finite Dynamical Lamination, or FDL, a definition that helps us extend combinatorial questions and helps us study the structure of parameter space. \cref{Limit} discusses the limits of sequences of FDL, and \cref{Realized} proves that all FDL are realized by a polynomial. \cref{Loops} finally shows how to assemble graphs that represent parts of parameter space.

\section{The Critical Chords Problem}\label{CCP}

Given a finite set of disjoint, periodic polygons, when does there exist a $\sigma_d$-invariant lamination that contains them? From \cite{thu09}, it is known that the question is the same as whether there is a set of $d-1$ disjoint critical leaves for that set of polygons. This section is dedicated to proving the existence of those critical cords when they exist.

\begin{definition}
	Given a gap, its \emph{basis} is its intersection with the circle, and its \emph{holes} are the circle arcs complementary to its basis.
\end{definition}

\begin{definition}
	A subset of the circle is \emph{critical} if it maps non-injectively by $\sigma_d$. A set in the disk (a gap or chord) is \emph{critical} if its basis is critical.
\end{definition}

A lamination may be called \emph{forward invariant}, meaning that the lamination contains its $\sigma_d$-image. The next theorem is well known. The following statement is proposition II.4.5 of \cite{thu09}.

\begin{theorem}\label{criticalcords}
	Given $d-1$ critical leaves compatible with a forward invariant lamination, there exists an invariant lamination containing the original that is also compatible with the critical chords.
\end{theorem}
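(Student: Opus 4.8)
The plan is a pullback construction in the style of Thurston. Write $C = c_1 \cup \cdots \cup c_{d-1}$ for the union of the given critical chords and $\mathcal{L}_0$ for the forward invariant lamination they are compatible with; I would first treat the generic case in which the $c_i$ are pairwise disjoint with $2(d-1)$ distinct endpoints. Then $\mathbb{D}\setminus C$ has exactly $d$ components $R_1,\dots,R_d$, and a standard length count shows $\overline{R_i}\cap\mathbb{S}$ is a union of closed arcs of total length $1/d$ on which $\sigma_d$ is, off the finitely many endpoints of the $c_j$, a cyclic-order-preserving bijection onto $\mathbb{S}$; let $\tau_i\colon\mathbb{S}\to\overline{R_i}\cap\mathbb{S}$ be the resulting inverse branch, with an arbitrary but fixed choice at the ambiguous points. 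For a nondegenerate leaf $\ell=\overline{uv}$ put $\tau_i(\ell)=\overline{\tau_i(u)\tau_i(v)}$; since $R_i$ lies on one side of each $c_j$, the chord $\tau_i(\ell)$ crosses no $c_j$, so $\tau_i(\ell)\subseteq\overline{R_i}$, and $\sigma_d(\tau_i(\ell))=\ell$ by construction.

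Next I would define the pullback operator $P(\mathcal{L})=\mathcal{L}\cup\bigcup_{i=1}^{d}\{\tau_i(\ell):\ell\in\mathcal{L}\text{ nondegenerate}\}$ on crossing-free families compatible with $C$, and verify the crucial closure property: $P(\mathcal{L})$ is again crossing-free and compatible with $C$. Two pullbacks through different regions do not cross because they lie in $\overline{R_i}$ and $\overline{R_j}$, which have disjoint interiors; two pullbacks through the same region do not cross because $\tau_i$ preserves cyclic order; and no pullback crosses $C$ by the previous paragraph. Then I would iterate, setting $\mathcal{L}_{n+1}=P(\mathcal{L}_n)$ from the given $\mathcal{L}_0$, and take $\mathcal{L}_\infty$ to be the topological closure of $\bigcup_n\mathcal{L}_n$ together with all degenerate leaves. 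Since crossing is an open condition, $\mathcal{L}_\infty$ is a lamination; it contains $\mathcal{L}_0$, and every leaf of it lies in some $\overline{R_i}$, so $\mathcal{L}_\infty$ is compatible with $C$.

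The remaining work is to check the three conditions of sibling $d$-invariance. Forward invariance follows by induction: $\mathcal{L}_0$ is forward invariant by hypothesis, any leaf added by $P$ maps to a leaf already present, and taking closures preserves this. For conditions (2) and (3), I would first take $\ell\in\bigcup_n\mathcal{L}_n$ with nondegenerate image $m=\sigma_d(\ell)$; since $\ell$ is compatible with $C$ it lies in a single $\overline{R_i}$, which forces $\ell=\tau_i(m)$, so $\tau_1(m),\dots,\tau_d(m)\in\mathcal{L}_\infty$ are $d$ non-crossing leaves all mapping to $m$ and including $\ell$ (they are nondegenerate because $m$ is). For a limit leaf $\ell=\lim_k\ell^{(k)}$ with $\ell^{(k)}\in\mathcal{L}_{n_k}$, I would pass to a subsequence on which the region index of $\ell^{(k)}$ is constant and each family $\tau_1(\sigma_d(\ell^{(k)})),\dots,\tau_d(\sigma_d(\ell^{(k)}))$ converges, using compactness of the leaf space; the limits give $d$ non-crossing siblings of $\ell$ in $\mathcal{L}_\infty$. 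This yields (2) and (3) and finishes the generic case.

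The step I expect to be the main obstacle is the non-generic configuration, where some $c_j$ share an endpoint (or some $c_j$ is degenerate): then $\mathbb{D}\setminus C$ has fewer than $d$ components and $\sigma_d$ has degree $>1$ on some of them, so not all inverse branches $\tau_i$ exist. I would handle this by first adjoining the finitely many forced critical chords needed to close up each partial all-critical gap, after which $\mathbb{D}\setminus C$ again has $d$ univalent components and the argument runs verbatim; the delicate point is checking that these adjoined chords stay compatible with $\mathcal{L}_0$. An alternative is to perturb $C$ into generic position, apply the first part, and pass to a limit of the resulting laminations, verifying the limit still contains $\mathcal{L}_0$ and is compatible with the original $C$. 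Either route reduces to the same combinatorial core, namely that pulling a non-crossing family back through the $d$ univalent branches of $\sigma_d$ determined by a complete critical collection produces a non-crossing family.
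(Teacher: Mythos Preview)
The paper does not prove this theorem: it states it as well known, citing Thurston's proposition~II.4.5 and remarking that a written proof has been promised elsewhere. There is thus no in-paper argument to compare yours against; your branched-pullback construction is exactly the classical approach that the citation points to.

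Your outline is correct in spirit, with two points that deserve tightening. First, when verifying that $P(\mathcal L)$ is crossing-free you handle pullback--versus--pullback and pullback--versus--$C$, but not pullback--versus--old-leaf-of-$\mathcal L_0$. This omitted case is precisely where the forward-invariance hypothesis enters: any $m\in\mathcal L_0$ lies in some $\overline{R_j}$ by compatibility with $C$, and since $\sigma_d(m)\in\mathcal L_0$ one has $m=\tau_j(\sigma_d(m))$, so $m$ is itself a pullback and the case collapses to the ones you already treated. Without this observation the hypothesis is never used. Second, condition~(3) of sibling invariance requires the $d$ siblings to be \emph{disjoint}, not merely non-crossing as you write; an ``arbitrary but fixed choice'' of $\tau_i$ at the images of the critical endpoints does not guarantee this, since two branches could then send such a point to the same preimage. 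The standard fix is a half-open convention assigning each endpoint of each $c_j$ to exactly one region, so that $\tau_1,\dots,\tau_d$ genuinely partition every fibre of $\sigma_d$. Even with that convention one must still say a word about why disjointness survives for limit leaves whose endpoints fall on critical-chord endpoints; it does, but this is the one place where the bookkeeping is genuinely delicate. Your handling of the non-generic case by completing $C$ to a full critical collection is the usual reduction and is fine.
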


\begin{definition}
	A \emph{round gap} is the closure of a component of the complement of a lamination that has arcs of the circle in its boundary. The empty lamination has one round gap.
\end{definition}

\begin{definition}
	A \emph{polygon} is the convex hull of a finite, non-singular equivalence class of the circle.
\end{definition}

A \emph{gap} is meant to be a component of the complement of the union of a lamination with the unit disk. The \emph{gaps} of a finite (one with finitely many leaves) lamination are defined as its polygons and round gaps. Lone leaves are polygons, and the term \emph{gap} always includes them.

\begin{definition}\label{degree}
	A gap, $G_i$, is of \emph{degree} $d_i$ if $\sigma_d|_{Bd(G)}$ is a positively oriented covering map of degree $d_i$. If the image of $G_i$ is a point then $d_i$ is the size of $G_i$'s basis. When the image is the leaf from $a$ to $b$, then having degree $d_i$ means that there are $2d_i$ vertices of the polygons and that the polygon's vertices alternate between the preimages of $a$ and $b$.
\end{definition}

Since we mostly think about polygons of a lamination, as opposed to its leaves, and those polygons have siblings, we give a name to a set of sibling polygons. Since a round gap with a degree is like the whole disk, this definition can be with reference to a round gap or the whole disk. We define sibling portrait in a way that is inconsistent with the existing notion of sibling collection: a sibling portrait \emph{of} a polygon $P$ is typically not composed of $P$'s siblings.

\begin{definition}\label{sibling}
	Given a round gap $R_i$ of degree $d_i$ and a polygon $P$ contained in $\sigma_d(R_i)$, a \emph{sibling portrait} is a set of disjoint polygons inside $R_i$ that map to $P$ and which together have $d_i$ sides mapping to each side of $P$.
\end{definition}

Since invariant laminations have siblings for every polygon, the question of whether we can create an invariant lamination comes down to whether we can find places for the siblings. That is, each polygon must be in some sibling portrait that could be added to the lamination. We work to show when this is possible. The next few proofs rely on the order of vertices in a polygon. This is the counterclockwise order of the vertices. When indexing the vertices of an $n$-gon, the subscript is implied to be modulo $n$.

\begin{lemma}\label{covering}
	A sibling portrait touches all the preimages of the original polygon's vertices, and all polygons of a sibling portrait have an equal number of vertices mapping to any particular vertex of the original polygon.
\end{lemma}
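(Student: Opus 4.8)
The plan is to exploit the rigidity of $\sigma_d$ as a cyclic‑order‑preserving covering map of the circle, which forces the boundary of each polygon in a sibling portrait to cover $\partial P$ in a controlled way.

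Fix notation: let $P$ have vertices $v_0,\dots,v_{n-1}$ in counterclockwise order, so its sides are $\overline{v_jv_{j+1}}$ with indices mod $n$, and let $Q_1,\dots,Q_m$ be the polygons of the portrait, each contained in $R_i$ and mapping onto $P$. I would first analyze a single $Q_k$. Reading its vertices counterclockwise, their $\sigma_d$‑images form a weakly monotone cyclic sequence of vertices of $P$, since $\sigma_d$ preserves the cyclic order of the circle; moreover consecutive vertices of $Q_k$ are joined by a side, whose image must be an actual side of $P$ rather than a point or a diagonal (this is the content of ``$Q_k$ maps to $P$'', read via \cref{degree}). Hence the image sequence neither repeats nor skips a vertex, so it is exactly $v_0,v_1,\dots,v_{n-1}$ traversed some positive integer number of times $d_k$. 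Consequently $Q_k$ has precisely $d_k$ vertices over each vertex of $P$ — a count independent of the chosen vertex — and likewise $d_k$ sides over each side of $P$; in particular $d_k$ is the degree of $Q_k$ in the sense of \cref{degree}. This already establishes the second assertion of the lemma.

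For the first assertion I would sum over the portrait. By \cref{sibling} the $Q_k$ together supply $d_i$ sides over each side of $P$, so $\sum_k d_k=d_i$. Therefore the portrait has, in total, $\sum_k d_k=d_i$ vertices over any fixed vertex $v_j$, and these $d_i$ points are pairwise distinct, since the $Q_k$ are disjoint and the vertices within one polygon are distinct; they all lie on the basis of $R_i$. On the other hand $\sigma_d$ restricted to the basis of $R_i$ is a covering of degree $d_i$ onto the basis of $\sigma_d(R_i)$, and $v_j$ lies on that basis, being a circle point of $P\subseteq\sigma_d(R_i)$, so $v_j$ has exactly $d_i$ preimages in $R_i$. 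Hence the $d_i$ portrait‑vertices over $v_j$ are \emph{all} the preimages of $v_j$ lying in $R_i$; letting $j$ range over the vertices of $P$ completes the argument.

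The step I expect to be the crux is the assertion, used above, that $\sigma_d|_{\partial Q_k}$ is a positively oriented covering of $\partial P$ — equivalently, that a side of $Q_k$ can neither collapse to a point nor be carried onto a diagonal of $P$. This must be extracted from the meaning of ``maps to $P$'' in \cref{sibling} (interpreted through \cref{degree}) combined with order‑preservation of $\sigma_d$; once it is granted, the remainder is the bookkeeping above. The degenerate case in which $P$ is a single leaf should be handled separately using the alternating‑endpoint description of \cref{degree}, but the argument is the same with ``vertex'' meaning an endpoint of $P$ and the two orientations of the leaf playing the role of its ``sides''.
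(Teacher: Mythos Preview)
Your overall strategy—first show each $Q_k$ covers $\partial P$ with some degree $d_k$, then sum the $d_k$ to account for all $d_i$ preimages—is a sensible organization, and you correctly flag the crux: one needs to know that no side of a $Q_k$ collapses to a point or lands on a diagonal of $P$. The gap is that your proposed resolution of the crux does not go through. \cref{sibling} only demands that the $Q_k$ \emph{collectively} supply $d_i$ sides over each side of $P$; it does not assert that \emph{every} side of each $Q_k$ maps to a side of $P$. And \cref{degree} is a definition of what it means for a gap to have a degree, not a theorem that portrait polygons possess one. The weak cyclic monotonicity you invoke is genuine (it is the content of the Remark following the lemma), but monotonicity together with surjectivity onto the vertex set of $P$ still permits vertex sequences such as $v_0,v_2,v_3,v_0,v_1,v_2,v_3$ over a square $P$, producing a side over the diagonal $\overline{v_0v_2}$.

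The paper closes exactly this gap, but by running the dependency in the opposite direction. There are $d_i$ preimages of each vertex $x_k$ in $R_i$; by definition the portrait contains $d_i$ sides over $\overline{x_{k-1}x_k}$ and $d_i$ over $\overline{x_kx_{k+1}}$, each needing a preimage of $x_k$ as an endpoint; and disjointness forces any preimage of $x_k$ to meet at most two sides of a single polygon. The pigeonhole then says every preimage of $x_k$ is used, and both sides incident to it go to $\overline{x_{k-1}x_k}$ and $\overline{x_kx_{k+1}}$. This simultaneously yields the first assertion \emph{and} excludes diagonals and critical sides—only after which the walk around $Q_k$ is forced to cycle through $x_1,\dots,x_n$ in order. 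In short, the global count supplies the crux you need for the local statement; your argument assumes the local statement in order to reach the global one, and so leaves the key step unsupported.
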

\begin{proof}
	Since polygons are disjoint even at their endpoints, each preimage vertex is, at most, part of two sides of one polygon. If the original polygon has an order of vertices like $x_1x_2x_3\dots x_n$ then each $x_k$ must be used the maximal number of times in preimages of $\overline{x_{k-1}x_k}$ and $\overline{x_kx_{k+1}}$. If we have a 2-gon, then we have an order $ab$ and in that case, each vertex must be used in two sides of a polygon that map to the two sides of the original polygon. Consider a walk around the sides of a polygon. We cannot afford to visit vertices in an order other than that of the original polygon. This excludes not only leaves whose images are interior leaves but also critical leaves. Moreover, we can only return to our starting place if we are at the end of a cycle.
\end{proof}

\begin{remark}
	The preimages of the vertices of a polygon are evenly spaced throughout the circle in the same repeating order as the polygon. (To see this, consider pulling back the polygon according to an all-critical d-gon; a half-open arc under any side of the d-gon maps monotone onto the circle.) For this reason, there is always at least one sibling portrait that can be formed by using all the vertices in one polygon.
	If a polygon maps with a degree (with positive orientation), then its siblings are in the same repeating order as itself. \end{remark}

This condition was first noted to be necessary in \cite{blokh2012laminationslanguageleaves}:

\begin{lemma}\label{iffpositive}
	Given a polygon, $P$, that maps as a covering map, it divides its siblings into even groups iff it maps with positive orientation. In this case, there exists a sibling portrait containing $P$ in the disc.
\end{lemma}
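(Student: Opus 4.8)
The plan is to reduce everything to the combinatorics of the finite set $\sigma_d^{-1}(V)$, where $V=\{v_1,\dots,v_n\}$ is the counterclockwise vertex set of $Q:=\sigma_d(P)$. By the preceding remark these $nd$ points, read counterclockwise, carry the labels $v_1,v_2,\dots,v_n$ repeated exactly $d$ times; fix an indexing $0,1,\dots,nd-1$ so that the point with index $t$ is a preimage of $v_{1+(t\bmod n)}$. The vertices of $P$ form a subset $S$ of these $nd$ points. Since $P$ maps as a covering map onto $Q$, each side of $P$ must map onto a side of $Q$ (a side mapping to a diagonal or to a point would keep $\sigma_d(\partial P)$ from being all of $\partial Q$, or would destroy the local-homeomorphism property), so consecutive elements of $S$ carry labels adjacent in the cyclic order on $V$ --- this also recovers the exclusion of critical leaves noted in \cref{covering}. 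A covering map of the circle is either orientation-preserving or orientation-reversing and nothing else, and I will check that ``positive orientation'' in the sense of \cref{degree} is exactly the case in which, proceeding counterclockwise, consecutive elements of $S$ always step from $v_j$ to $v_{j+1}$; equivalently, the index gap between consecutive elements of $S$ is $\equiv 1\pmod n$. The only alternative, the orientation-reversing case, is the one in which every such gap is $\equiv -1\pmod n$.

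With this in hand the ``iff'' is a short count. For a gap $g$ between consecutive elements of $S$, the hole of $P$ lying between them contains the $g-1$ preimage vertices sitting in the intervening indices, and these occupy $g-1$ consecutive index positions. In the positive case $g-1$ is a multiple of $n$, so that block of leftover vertices contains each element of $V$ the same number of times; thus the holes of $P$ cut $\sigma_d^{-1}(V)\setminus S$ into groups each of which is a whole number of copies of $V$ --- which is what it means for $P$ to divide its siblings into even groups. In the orientation-reversing case $g-1\equiv n-2\pmod n$, and a direct look at the labels shows the block in the hole bounded by $P$-vertices labeled $v_j$ and $v_{j-1}$ is deficient in precisely those two labels, hence is never a whole number of copies of $V$; so $P$ does not divide its siblings into even groups. (When $n=2$ there is no genuine orientation distinction and both sides of the equivalence hold automatically; I will remark on this degenerate case.)

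For the final sentence, assume $P$ is positively oriented. Inside each hole of $P$ the leftover preimage vertices occur in consecutive blocks of $n$, each block carrying $v_1,\dots,v_n$ once in counterclockwise order; let $P_1,P_2,\dots$ be the convex hulls of these blocks, each a positively-oriented degree-$1$ preimage of $Q$. Blocks within a single hole lie in pairwise disjoint subarcs and distinct holes are disjoint, so $P$ together with all the $P_k$ is a family of pairwise disjoint polygons; their degrees add to $d_i+(d-d_i)=d$, so there are $d$ sides mapping to each side of $Q$ (and, as \cref{covering} requires, every preimage vertex is used). Taking $R_i$ to be the whole disk in \cref{sibling}, this family is a sibling portrait of $Q$ that contains $P$.

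The step I expect to be the real work is the middle of the first paragraph: arguing carefully that ``covering map'' forces sides to map onto sides, and pinning the orientation dichotomy to the congruences $g\equiv\pm1\pmod n$ so that the label count in the second paragraph lines up exactly with the phrase ``divides its siblings into even groups'' --- including a clean treatment of the degenerate $n=2$ case. Once the labeled-preimage picture from the preceding remark is set up, the remaining counting and the construction of the sibling portrait are routine.
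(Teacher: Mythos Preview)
Your proposal is correct and follows essentially the same route as the paper: both arguments label the $nd$ preimage vertices by the cyclically repeating word $v_1v_2\cdots v_n$, observe that a covering forces consecutive vertices of $P$ to carry adjacent labels (hence all gaps are $\equiv 1$ or all $\equiv -1 \pmod n$), and then count labels in each hole to see that the positive case gives whole cycles while the reversing case leaves two labels short. Your write-up is a bit more explicit than the paper's in three places --- the modular arithmetic on the index gaps, the concrete construction of the remaining one-to-one sibling polygons from consecutive $n$-blocks, and the observation that $n=2$ collapses the dichotomy --- but these are refinements of presentation rather than a different idea.
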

\begin{proof}
	Label the vertices of the original polygon, $\sigma_d(P)$, as $x_1x_2x_3\dots x_n$. Since the $\sigma_d(P)$ has a leaf from each $x_k$ to each $x_{k+1}$, $P$ must also. Therefore, as we walk around the leaves of $P$, we find vertices in a, possibly repeating, order of either $x_1x_2x_3\dots x_n$ or $x_nx_{n-1}x_{n-2} \dots x_1$. Since a polygon is a convex hull, the order of its vertices that we get by walking its leaves is the same as the order that they appear on the circle.

	Suppose that we start drawing $P$ one leaf at a time starting with $x_1$ and draw the leaves in CCW, counterclockwise, order. In all of $R_j$, we have the preimages of the vertices in order like $x_1x_2x_3\dots x_nx_1x_2x_3\dots x_n\dots$. Suppose that we then connect from $x_k$ to $x_{k-1}$ while moving CCW by making $P$ orientation reversing. Then the open arc under that leaf would contain $x_{k+1}$ and $x_{k-2}$, the start and end, respectively. Using the repeating order of vertices in the arc, we see that it contains fewer $x_k$ vertices than $x_{k+1}$. Thus, we cannot form complete groups of vertices as needed in the previous lemma.

	Alternately, if $P$ is orientation preserving, then under the side from $x_k$ to $x_{k+1}$, the first letter in the open arc is $x_{k+1}$ and the last letter in the open arc is $x_k$. Thus, under that side, there is some whole number of cycles.
\end{proof}

The following lemma is the contribution of Dr. Oversteegen. The proof does not require that $\overline{xy}$ is non-crossing with $\overline{ab}$, but it does require that they not be siblings.
\begin{lemma}\label{LexLemma}
	Given a leaf $\overline{ab}$ and a leaf $\overline{xy}$ such that the image $\sigma_d(\overline{xy})$ is disjoint from $\sigma_d(\overline{ab})$, either side of $\overline{xy}$ has the same number of siblings of $a$ as it does of $b$.
\end{lemma}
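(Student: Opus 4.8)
Here a sibling of the point $a$ is a point of the fibre $\sigma_d^{-1}(\sigma_d(a))$ (so there are $d$ of them, including $a$ itself), and similarly for $b$. Put $p=\sigma_d(a)$ and $q=\sigma_d(b)$, so the siblings of $a$ are the $d$ preimages of $p$, the siblings of $b$ the $d$ preimages of $q$, and $\sigma_d(\overline{ab})=\overline{pq}$, $\sigma_d(\overline{xy})=\overline{\sigma_d(x)\,\sigma_d(y)}$. The hypothesis that these two images are disjoint says precisely that $p$ and $q$ avoid both $\sigma_d(x)$ and $\sigma_d(y)$ and, when $\sigma_d(\overline{xy})$ is nondegenerate, lie in the same one of its two complementary open arcs. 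So the lemma reduces to this: for a fixed open arc $A$ bounded by $x$ and $y$, the quantity $\#(\sigma_d^{-1}(t)\cap A)$, as a function of $t\notin\{\sigma_d(x),\sigma_d(y)\}$, depends only on which side of $\sigma_d(\overline{xy})$ the point $t$ lies on; evaluating at $t=p$ and $t=q$ then gives the claim for that arc, and the same for the other arc.

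To establish the reduction, take $A$ to be the counterclockwise arc from $x$ to $y$, of length $\lambda$, and cut $A$ into consecutive counterclockwise pieces of length $1/d$ starting at $x$: there are $m=\lfloor d\lambda\rfloor$ full pieces followed by one shorter final piece reaching $y$ (and no final piece when $d\lambda$ is an integer). Because $\sigma_d$ is orientation preserving and multiplies length by $d$, each full piece wraps once around the entire circle, with its two endpoints as its only points over $\sigma_d(x)$; hence for every $t\ne\sigma_d(x)$ it contains exactly one preimage of $t$, and that preimage lies in the open arc $A$. The final piece maps homeomorphically onto the counterclockwise arc from $\sigma_d(x)$ to $\sigma_d(y)$, which is one of the two open arcs determined by $\sigma_d(\overline{xy})$; so for $t\notin\{\sigma_d(x),\sigma_d(y)\}$ it contributes a preimage lying in $A$ exactly when $t$ lies on that side of $\sigma_d(\overline{xy})$. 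Adding up, for such $t$ the count $\#(\sigma_d^{-1}(t)\cap A)$ equals $m$ or $m+1$ according only to the side of $\sigma_d(\overline{xy})$ containing $t$, with $m$ independent of $t$. The disjointness hypothesis places $p$ and $q$ off $\{\sigma_d(x),\sigma_d(y)\}$ and on a common side, so the two counts agree: $A$ contains as many siblings of $a$ as of $b$. Running the same argument on the other open arc (or noting that the two open arcs together exhaust all $d$ siblings of $a$ and all $d$ of $b$, none of which sits at $x$ or $y$) finishes the proof. The degenerate configurations are immediate: if $\sigma_d(\overline{xy})$ is a point there is no final piece and every $t\ne\sigma_d(x)$ has exactly $m$ preimages in $A$; if $\sigma_d(\overline{ab})$ is a point then $p=q$ and ``siblings of $a$'' and ``siblings of $b$'' name the same set.

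The only delicate point is the middle bookkeeping: checking that each length-$1/d$ piece really places exactly one preimage of $t$ into the \emph{open} arc $A$ for every $t\ne\sigma_d(x)$ — the shared endpoints of the pieces are all preimages of $\sigma_d(x)$, which is exactly why that single value must be set aside — and that the leftover piece's image is a full side of $\sigma_d(\overline{xy})$. Everything else is formal. The same fact can also be seen purely combinatorially: the $2d$ preimages of $p$ and $q$ occur around the circle in strictly alternating order $p,q,p,q,\dots$ (they are evenly spaced and repeat the cyclic order of the pair $\{p,q\}$, as in the remark after \cref{covering}), so along each arc cut by $\overline{xy}$ the siblings encountered alternate in kind, and disjointness of the images is precisely the condition forcing the first and last sibling met on each arc to be of opposite kinds, hence equal counts; I would present the winding version above since it is quicker to make rigorous. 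Note that no use is made of $\overline{xy}$ and $\overline{ab}$ being noncrossing, matching the remark preceding the statement, and disjointness of the images is in fact slightly more than needed — it already forbids $\overline{xy}$ from being a sibling of $\overline{ab}$.
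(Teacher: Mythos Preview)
Your proof is correct, and your winding/degree argument is a genuinely different route from the paper's. The paper argues purely through the cyclic order of the $4d$ siblings of $a,b,x,y$: these repeat around the circle in the same pattern as the images $\sigma_d(a),\sigma_d(b),\sigma_d(x),\sigma_d(y)$, so one places $x$ (without loss of generality) in an $a$--$b$ interval and observes that if $y$ sat in a $b$--$a$ interval the repeating pattern would be $axby$, forcing $\sigma_d(\overline{xy})$ to cross $\sigma_d(\overline{ab})$; the disjointness hypothesis therefore pins $y$ to an $a$--$b$ interval as well, and the alternating pattern gives equal counts. This is exactly the ``purely combinatorial'' alternative you sketch in your final paragraph.

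What your primary argument buys is an explicit formula: you actually compute $\#(\sigma_d^{-1}(t)\cap A)$ as $m$ or $m+1$ with $m=\lfloor d\lambda\rfloor$, rather than merely showing the two counts agree, and the degenerate cases ($\sigma_d(\overline{xy})$ or $\sigma_d(\overline{ab})$ a point) fall out without extra words. The paper's version, by contrast, stays within the same ``repeating order of vertices'' idiom used throughout \cref{covering} and \cref{iffpositive}, so it meshes more seamlessly with the surrounding arguments. Both are short; yours is arguably easier to make airtight, as you note.
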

\begin{proof}
	Consider that the siblings of $a$, $b$, $x$, $y$ are evenly spaced throughout the circle in a fixed order. We label the siblings of these 4 points without distinguishing one sibling from another. If the $x$ or $y$ is equal to some sibling of $a$ or $b$, then their images obviously cannot be disjoint. Assume without loss of generality that $x$ is in some arc bounded by $a$ and then $b$. If $y$ is in an interval bounded by a sibling of first $b$ and then $a$, then the repeating order of the 4 points is $axby \dots$. A half open arc starting and ending with $a$ will map monotone onto the circle. Thus, the image of $\sigma_d(\overline{xy})$ crosses $\sigma_d(\overline{ab})$.
\end{proof}

\begin{lemma}\label{addsiblings}
	Given a disjoint, finite, forward invariant set of orientation preserving polygons, there exists a lamination containing those polygons such that all polygons are part of a sibling portrait in the disk.

	Or, more generally, given a disjoint, finite set of orientation preserving polygons such that for every pair of polygons, $P_1$ and $P_2$, either $\sigma_d(P_1)=\sigma_d(P_2)$ or $\sigma_d(P_1)$ is disjoint from $\sigma_d(P_2)$; then there exists a lamination containing those polygons such that all polygons are part of a sibling portrait in the disk.
\end{lemma}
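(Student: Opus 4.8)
The plan is to prove the second (more general) statement, from which the first follows: for a disjoint forward-invariant family the image of each polygon lies inside one of the polygons of the family, so any two images are equal or disjoint (if two distinct images share only a vertex, a short separate argument, or a mild strengthening of \cref{LexLemma}, is needed). Write $\mathcal O=\{P_1,\dots,P_k\}$ for the given polygons and, for each distinct image polygon $Q$, set $\mathcal P_Q=\{P_i:\sigma_d(P_i)=Q\}$; by hypothesis the members of $\mathcal P_Q$ are disjoint and each maps onto $Q$ preserving orientation. Since the conclusion only asks that each $P_i$ lie inside \emph{some} sibling portrait, it is enough to enlarge each $\mathcal P_Q$ to a sibling portrait $S_Q$ of $Q$ in the disk so that $\bigcup_Q S_Q$ is pairwise non-crossing; adjoining the degenerate leaves to this finite family then produces the lamination.

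Fix an ordering $Q^{(1)},\dots,Q^{(m)}$ of the distinct image polygons and build the $S_Q$ one class at a time, writing $S^{(s)}$ for $S_{Q^{(s)}}$ and keeping the running figure $\mathcal O\cup S^{(1)}\cup\dots\cup S^{(s-1)}$ pairwise non-crossing. To add the completing polygons of class $Q^{(s)}=x_1x_2\cdots x_n$, recall from the remark after \cref{covering} that the preimages of $x_1,\dots,x_n$ are evenly spaced around the circle in the repeating pattern $(x_1\cdots x_n)^d$. The crucial claim is that for every boundary chord $\ell$ of every gap $G$ of the running figure, the side of $\ell$ containing $G$ carries a \emph{balanced} collection of these preimage vertices --- equally many over each $x_j$. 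If the polygon with side $\ell$ maps to $Q^{(s)}$ (in the running figure, only the polygons of $\mathcal P_{Q^{(s)}}\subseteq\mathcal O$ do), then the side of $\ell$ containing $G$ is the side lying ``under'' $\ell$ in the sense of the proof of \cref{iffpositive}, which that proof shows contains a whole number of complete cycles of the pattern, and is therefore balanced. Otherwise $\sigma_d(\ell)$ lies in some $Q^{(t)}$ with $t\neq s$, and $Q^{(t)}$ is disjoint from $Q^{(s)}$ because distinct images are disjoint; so for any preimage leaf $\overline{ab}$ of a side of $Q^{(s)}$ the leaves $\overline{ab}$ and $\ell$ have disjoint images and are not siblings, and \cref{LexLemma} gives equally many siblings of $a$ and of $b$ on each side of $\ell$, which, as the chosen side of $Q^{(s)}$ varies, forces balance over all $x_j$. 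Granting the claim: the arc directly behind each boundary chord of $G$ then also carries a balanced set (the whole circle carries $d$ of each $x_j$), so these balanced ``skips'' make the arcs of $\overline G$ line up as a single stretch of the pattern, and the preimage vertices of $Q^{(s)}$ in $\overline G$ that are not already used split, read around $\partial G$, into a whole number of complete cycles. Joining each such cycle into a convex $n$-gon inside $G$ produces positively oriented preimages of $Q^{(s)}$ that are pairwise disjoint and contained in $G$, hence cross nothing outside $G$. Doing this in every gap and adjoining the result to $\mathcal P_{Q^{(s)}}$ gives $S^{(s)}$; counting (the $d$ complete cycles of $Q^{(s)}$-preimage vertices are partitioned between $\mathcal P_{Q^{(s)}}$ and the new polygons) shows $S^{(s)}$ has $d$ sides over each side of $Q^{(s)}$, so it is a genuine sibling portrait, and by construction it does not cross the previous figure.

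After all $m$ classes are processed, $\bigcup_s S^{(s)}$ is the required family. The main obstacle is the balance claim and its use: the disjoint-images hypothesis is exactly what lets \cref{LexLemma} apply across image classes, while \cref{iffpositive}'s cycle count covers a chord mapping into $Q^{(s)}$ --- and then there is the fussy-but-routine plane geometry needed to check that (i) the balanced, whole-cycle pieces lying on the several arcs of $\overline G$ really concatenate into complete cycles, and (ii) the convex hulls of those cycles stay inside $G$ and are pairwise disjoint. A genuinely separate but easier case is that of polygons whose image is a leaf (or a point) rather than an $n$-gon with $n\geq 3$: there ``orientation preserving'' is automatic and \cref{iffpositive} is replaced by the straightforward count when the image is a leaf. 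Finally, if the statement is meant to require the \emph{added} polygons (and the image polygons $Q^{(s)}$) to lie in sibling portraits as well, one iterates the construction, which terminates because the family is finite.
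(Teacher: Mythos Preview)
Your proof is correct and takes essentially the same approach as the paper's: both arguments hinge on showing that each hole of the relevant round gap carries a balanced set of preimage vertices---invoking \cref{iffpositive} when the bounding leaf belongs to a polygon with the same image and \cref{LexLemma} otherwise---and then concluding that the open arcs carry complete cycles in positive order, from which a new orientation-preserving sibling can be built. The only organizational difference is that the paper adds one sibling polygon at a time (around a single unused preimage vertex $a'$), whereas you process an entire image class $Q^{(s)}$ at once; this does not change the substance of the argument, and the ``fussy-but-routine'' concatenation step you flag is handled at the same level of rigor in the paper.
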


\begin{proof}
	As long as there is a polygon, $P$, left that is not part of a sibling portrait, we can add a polygon that is its sibling. Consider a sibling $a'$ of a vertex of $P$. $a'$ is on the boundary of a round gap, call it $R$. Every sibling of a vertex $P$ is either in an open arc of $R$ or in the closure of one of its holes. These holes fall into one of 2 categories: either they are bounded by a leaf that is part of a polygon sibling to $P$, or they are not. In the first case \cref{iffpositive} applies to that polygon; thus, on the side of the polygon containing $R$, there is a complete set of vertices. Using \cref{LexLemma}, we can make the same conclusion about the other type of hole. Each closure of a hole contains an equal number of siblings of each vertex/letter. Since each vertex has the same number of siblings in the whole disk, each vertex has the same number of siblings in the union of the open arcs of $R$.

	The vertices in the open arcs of $R$ are in the same positive order as $P$. Supposing the opposite, then some vertices are skipped. But since they are in positive order in the disk, some hole must contain the skipped vertices. In order to skip a vertex, you have to separate it without putting the other vertices with it. Thus, some closure of a hole of $R$ contains an uneven number of vertices in contradiction to the argument above.

	Since those vertices are together in one gap and positively oriented, we can form a new polygon that is also positively oriented.
\end{proof}

\begin{lemma}\label{freecriticality}
	\mbox{}
	\begin{enumerate}[i)]
		\item
		      If $R_j$ is a round gap with of degree $d_j$, and $P$ is a polygon contained in $\sigma_d(R_j)$, then the round gaps resulting from inserting a sibling portrait into $R_j$ each have a degree.
		\item Consider the set of gaps (including polygons) $\{G_i\}$ formed when inserting that sibling portrait into $R_j$, and their degrees ${d_i}$. Then $$\sum_i(d_i-1) = d_j-1$$.
		\item Moreover, given a lamination that consists of finitely many disjoint polygons such that each polygon is part of a sibling portrait in the whole disk, all gaps have a degree, and the equation above holds. In this case $d=d_j$.
	\end{enumerate}
\end{lemma}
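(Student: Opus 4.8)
The plan is to prove the three parts in order, bootstrapping the general statement (iii) from the single-step statements (i) and (ii).

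\medskip

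\noindent\textbf{Part (i).} Suppose we insert a sibling portrait consisting of disjoint polygons $Q_1,\dots,Q_m$ into the round gap $R_j$, all mapping to $P \subseteq \sigma_d(R_j)$. I would first argue that each $Q_k$ has a degree: by \cref{iffpositive} and the Remark, each $Q_k$ is positively oriented and maps $\operatorname{Bd}(Q_k)$ as a positively oriented covering map onto $\operatorname{Bd}(P)$, so it has a well-defined degree $d_k$ (the number of vertices of $Q_k$ over each vertex of $P$, by \cref{covering}). The remaining new gaps are the round gaps created by cutting $R_j$ along the $Q_k$'s. To see each such round gap $R'$ has a degree, observe that its basis is a union of arcs of $R_j$'s basis together with arcs of the $Q_k$'s bases; walking its boundary, $\sigma_d$ restricted to $\operatorname{Bd}(R')$ is a local homeomorphism onto a circle (because it is so on each piece, by \cref{covering} the orderings match up, and the pieces glue at preimage vertices of $P$ which are used the maximal number of times), hence a positively oriented covering map of some degree.

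\medskip

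\noindent\textbf{Part (ii).} The cleanest route is a counting argument via the Riemann--Hurwitz-style formula for branched covers of the circle, i.e.\ just counting preimages. The map $\sigma_d|_{\operatorname{Bd}(R_j)}$ is degree $d_j$; after inserting the sibling portrait, $R_j$ is partitioned into the gaps $\{G_i\}$ with degrees $d_i$. I would fix a generic point $\theta$ in the interior of an edge of $\sigma_d(R_j)$ lying on $\operatorname{Bd}(P)$ (or, if $P$ is a single leaf or if $\sigma_d(R_j)$ has no relevant structure, a generic circle point inside a hole), count its $\sigma_d$-preimages lying in the closed region $R_j$ in two ways: globally it contributes $d_j$ (with appropriate care that these preimages land in $R_j$), and locally each $G_i$ contributes exactly $d_i$ of them. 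That gives $\sum_i d_i = d_j$ only after accounting for the preimages that are shared vertices on the leaves of the $Q_k$'s — each such shared leaf-vertex is counted in two gaps. A clean way to avoid double-counting bookkeeping is the Euler-characteristic argument: the number of new round gaps plus polygons, their total edge count, and the total vertex count of the portrait satisfy a relation forcing $\sum_i(d_i-1)=d_j-1$. Concretely: if $P$ has $n$ sides and the portrait has $N$ total leaves, then $N = n\,d_j$ by the definition of sibling portrait; counting the polygons $Q_k$ (with $\sum_k d_k\, n$ leaves among them, i.e.\ $\sum_k d_k = $ total leaves in polygons over $n$) and the round gaps, and using that each leaf borders exactly two gaps while each arc of $R_j$'s basis borders exactly one, a short bookkeeping identity yields the formula. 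I would present this as the main computation.

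\medskip

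\noindent\textbf{Part (iii).} This follows by induction on the number of leaves. Start from the empty lamination, whose unique round gap is the whole disk, of degree $d$; this is the base case with the trivial identity $d-1=d-1$. Given a finite lamination of disjoint polygons each in a sibling portrait in the whole disk, order the polygons so that we build up the lamination one sibling portrait at a time: whenever we add a sibling portrait, all of its polygons lie in a single round gap $R_j$ of the partial lamination (they are disjoint from all previously-added leaves and lie inside the complementary region containing them), and they form a sibling portrait for that round gap's degree $d_j$ by the already-established orientation facts. Applying (i) and (ii) at each step, every gap retains a degree and the quantity $\sum_i(d_i-1)$ over all current gaps is invariant, equal to $d-1$ throughout. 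The one point needing care — and the step I expect to be the genuine obstacle — is verifying that a ``sibling portrait in the whole disk'' really restricts to a sibling portrait of the appropriate round gap at the moment we insert it, i.e.\ that the polygons of one original sibling portrait are never separated from each other by leaves of another sibling portrait added earlier. This is exactly the kind of ``vertices stay together in one gap'' argument used in the proof of \cref{addsiblings}, via \cref{LexLemma} and the even-group property, so I would invoke that machinery to close the gap.
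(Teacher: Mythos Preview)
Your inductive scaffolding for (iii) matches the paper's exactly, and you are actually more careful than the paper about one point: you flag that a sibling portrait ``in the whole disk'' must restrict to a sibling portrait in each intermediate round gap. This does not need the full \cref{addsiblings} machinery, though---once you know the intermediate round gap $R_j$ has degree $d_j$, it contains exactly $d_j$ preimages of each vertex of $P$, and since the disjoint polygons of the global portrait use all $d$ preimages, the ones landing in $R_j$ use exactly those $d_j$ and hence form a sibling portrait there.

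The real gap is in part (i). Saying ``$\sigma_d$ on $\operatorname{Bd}(R')$ is a local homeomorphism onto a circle because the orderings match up'' is not a proof: you have not identified \emph{which} circle, and a local homeomorphism between circles need not be a covering unless you control the image. The paper's argument is much more concrete. It builds the portrait one polygon at a time, tracking the cyclic labels $x_1x_2\cdots x_n$ of the preimage vertices, and shows that every leaf in the boundary of a new round gap is an $\overline{x_kx_{k+1}}$ \emph{for the same fixed $k$}, and every boundary arc runs from an $x_{k+1}$ to an $x_k$. Hence all the leaves are siblings and all the arcs are siblings, so the round gap covers a single complementary region of $P$ inside $\sigma_d(R_j)$ with degree equal to the number of polygons touching it. You need something at this level of specificity; the ``pieces glue'' sentence does not get there.

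Your part (ii) is a genuinely different route from the paper. The paper argues hierarchically: pick a first polygon, observe that under each of its sides the excess degrees of the subordinate gaps count exactly the extra vertex-cycles under that side, and sum. Your Euler-characteristic idea can be made to work, but you have not executed it, and it secretly depends on the structural fact from (i). Concretely: once you know each new round gap of degree $e_\ell$ is bordered by exactly $e_\ell$ polygon sides, you get $\sum_\ell e_\ell = n d_j$ (total polygon sides); combined with $\sum_k d_k = d_j$ (from the sibling-portrait definition) and $m + r - 1 = n d_j$ (the dual adjacency graph of the $m$ polygons and $r$ round gaps is a tree with one edge per polygon side), the identity $\sum_i(d_i - 1) = d_j - 1$ drops out. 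That is a clean argument, arguably cleaner than the paper's, but ``a short bookkeeping identity yields the formula'' is not it---you have to actually write down these three counts and combine them, and the first one is exactly the content of (i) done properly.
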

\begin{proof}
	\mbox{}
	\begin{enumerate}[i)]
		\item
		      The new polygons are already guaranteed to have a degree by \cref{iffpositive}, so consider only the round gaps created by inserting a sibling portrait into $R_j$. Consider an original polygon with some order, say, $x_1x_2x_3\dots x_n$. In all of $R_j$, that same order repeats. By the previous lemma, up to starting point and number of cycles, all polygons of the sibling portrait will have an order like $x_1x_2x_3\dots x_n$. While we construct a sibling portrait, we add one polygon at a time. Each polygon divides what remains of the circle into arcs. If the arc is of degree one, then of course, we have everything we could want. If we have a critical arc, then there are some preimages of the original vertices littered in the arc.

		      By \cref{iffpositive} after some ``first'' polygon is inserted, a critical arc created must have, up to starting point, the same repeating order as $R_j$. Let the start of the arc be $x_k$. Thus, its end is $x_{k+1}$. Moreover, the first preimage of a vertex appearing in the open arc is also $x_{k+1}$. We can also note that the sub-arc at the beginning and ending of the critical arc are siblings as well as being consecutive in the circular order of the arc. We then add a polygon starting with $x_{k+1}$ since that is the first preimage vertex of the open arc, and thus the last point of the polygon is $x_k$ by the polygon's circular order. In this way, as we remove from the arc by adding polygons until there are no critical arcs, what remains is a number (at least 2) of sibling arcs of the circle. These arcs and leaves of the first and subsequent polygons form the boundary of a round gap. From the parent polygon, we take the leaf $\overline{x_kx_{k+1}}$, and from every child polygon, we also take the leaf $\overline{x_kx_{k+1}}$. Since the arcs in between the leaves of this gap all have the same endpoints, they are also siblings. Therefore, the round gap that is formed has some degree equal to the number of polygons in its boundary.

		\item
		      Fix some sibling portrait. Once again, choose a ``first'' polygon and focus on one side. The degree of the round gap immediately under the first polygon is one greater than the number of polygons immediately under that side of the first polygon. The degree minus 1 of a polygon is the number of extra sets of vertices that it touches. Under each polygon below the first polygon, the subsequent round gaps have degrees that are one greater than the number of polygons immediately below them. According to this counting, each polygon adds degree to the round gap above them. The total $\sum_i(d_i-1)$ of all the gaps under one side of the first polygon is the number of sets of vertices under that side of the polygon. Since the first polygon takes up one set of vertices besides its own excess degree, the total over all sides and its own excess degree is one less because of that polygon.

		\item
		      Consider working up to the lamination inductively, but one sibling portrait at a time. The statement is true of the empty lamination. At each step while inserting a sibling portrait, in-between other sibling portraits, we can see it as inserting a sibling portrait into a round gap with, say, degree $d_j$. Since a round gap with a degree is like the disk, the equation holds for the new gaps inside the round gap. Since the new terms of the sum are equal to what they replace, the equation holds.
	\end{enumerate}
\end{proof}

The formula above implies something comparable to the Riemann-Hurwitz relation found in \cite{alma994567553503176} when it is applied to the polygons and the round gaps of the lamination. That is the criticality trapped in the polygons plus the criticality free in the round gaps adds to $d-1$.

After forming the sibling portraits, we can place $d-1$ critical chords (without forming a polygon out of them) into gaps in at least one way. Any point in the basis of a critical gap could be the starting point of such a concatenation of critical chords. These critical chords separate the circle into $d$ branches of the inverse, which by \cref{criticalcords} creates an invariant lamination containing the original set of polygons.

\begin{theorem}\label{existsinvariant}
	Given a forward invariant set of disjoint polygons, they are in at least one invariant lamination iff all the polygons map forward as positively oriented covering maps.
\end{theorem}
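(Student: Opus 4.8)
The statement is an equivalence, so I would prove the two implications separately. The forward-building direction ($\Leftarrow$) is essentially a synthesis of the lemmas already assembled in this section, while the necessity direction ($\Rightarrow$) carries the real combinatorial content, and I expect it to be the main obstacle.

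For ($\Leftarrow$), suppose the disjoint set $\{P_i\}$ is forward invariant and each $P_i$ maps as a positively oriented covering map. Forward invariance puts every $\sigma_d(P_i)$ back in the set, and since all members are pairwise disjoint, for any two polygons the images $\sigma_d(P_i)$ and $\sigma_d(P_j)$ are either equal or disjoint; hence the general hypothesis of \cref{addsiblings} is met, and applying it produces a finite lamination $\lam_0 \supseteq \{P_i\}$ in which every polygon lies in a sibling portrait in the disk. This $\lam_0$ is forward invariant, since each of its leaves is a side of some $P_i$ whose image is a side of $\sigma_d(P_i) \in \{P_i\} \subseteq \lam_0$. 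By \cref{freecriticality}(iii) every gap of $\lam_0$ carries a degree and $\sum_i (d_i - 1) = d - 1$, so, exactly as described in the paragraph preceding this theorem, one may thread $d-1$ disjoint critical chords through the critical gaps of $\lam_0$ without completing a polygon; these chords are non-crossing with $\lam_0$ because they sit inside its gaps. Then \cref{criticalcords} upgrades $\lam_0$ to an invariant lamination $\lam \supseteq \lam_0 \supseteq \{P_i\}$ compatible with those critical chords, which is what we wanted.

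For ($\Rightarrow$), suppose $\{P_i\}$ lies in an invariant lamination $\lam$ and fix one polygon $P = P_i$ with vertices $v_1 \cdots v_n$ in counterclockwise order. Each side $\overline{v_k v_{k+1}}$ is a leaf of $\lam$, so by sibling invariance its image is a leaf or a point and it has $d$ pairwise disjoint siblings in $\lam$; chaining over consecutive sides shows $\sigma_d(\{v_k\})$ lies in one $\lam$-class, whose convex hull is the image $Q := \sigma_d(P)$ (the cases where $Q$ degenerates to a leaf or a point being handled directly). I would first rule out that any side $\overline{v_k v_{k+1}}$ is critical: all $d$ disjoint siblings of each side of $Q$ are present in $\lam$, so by the vertex-counting bookkeeping in the proofs of \cref{covering} and \cref{LexLemma} the preimage vertices of $Q$ must be hit with equal multiplicity around the circle, whereas a critical side of $P$ would separate off an arc with a multiplicity defect exactly as in the ``critical leaves excluded'' step of \cref{covering} --- a contradiction. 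Hence $\sigma_d|_{\partial P}$ is a covering map of $\partial Q$ of some degree $d_i \ge 1$. Finally, $P$, together with a sibling portrait assembled from the siblings furnished by sibling invariance (using the evenly-spaced-preimages remark after \cref{covering}), divides its siblings into even groups, so \cref{iffpositive} forces $P$ to map with positive orientation. Since $P$ was arbitrary, this finishes the direction.

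The crux is the necessity direction, and within it the step that a polygon in a sibling-invariant lamination maps as an honest positively oriented covering map: \cref{iffpositive} is stated with ``maps as a covering map'' already assumed and only yields the existence of a sibling portrait as a one-way conclusion, so it cannot simply be cited as a black box here. The work is to re-run the open-arc and vertex-multiplicity counting from the proofs of \cref{covering}, \cref{LexLemma}, and \cref{iffpositive} directly against the $d$ disjoint siblings that sibling invariance supplies, rather than against a pre-built sibling portrait, and to dispatch the degenerate cases where $\sigma_d(P)$ collapses to a leaf or a point. By contrast, the ($\Leftarrow$) direction is bookkeeping over results already established.
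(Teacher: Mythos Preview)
Your $(\Leftarrow)$ direction is essentially the paper's argument, which is presented in the paragraph immediately preceding the theorem statement rather than in a separate proof environment: apply \cref{addsiblings}, then \cref{freecriticality}(iii), thread $d-1$ critical chords through the critical gaps, and invoke \cref{criticalcords}. One slip: after applying \cref{addsiblings}, the lamination $\lam_0$ contains not just the original $P_i$ but also the freshly added sibling polygons, so your sentence ``each of its leaves is a side of some $P_i$'' is false as written. Forward invariance of $\lam_0$ still holds, but for the reason that every added sibling has the \emph{same} image as the $P_i$ it is a sibling of, and that image is among the original polygons by hypothesis.

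For $(\Rightarrow)$ the paper does not argue at all: necessity is simply attributed to \cite{blokh2012laminationslanguageleaves} in the sentence introducing \cref{iffpositive} (``This condition was first noted to be necessary in \dots''). Your sketch therefore goes well beyond what the paper provides, and your instinct that \cref{iffpositive} cannot be cited as a black box is correct---it presupposes that $P$ already maps as a covering and only produces a sibling portrait as output, not as input. What you are really re-deriving is the theorem from that reference that finite gaps of a sibling $d$-invariant lamination either collapse or map as positively oriented covers; the leaf-wise sibling axiom does not hand you a sibling \emph{portrait} of $P$ directly, only $d$ disjoint siblings of each individual side, and assembling those into the even-group count needed for the \cref{iffpositive} argument is exactly the work. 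So relative to the paper your emphasis is inverted: $(\Leftarrow)$ is the section's contribution and $(\Rightarrow)$ is a citation, whereas you treat $(\Rightarrow)$ as the crux.
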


It is further asked whether the critical chords can be chosen to touch the original polygons. Consider sliding a critical chord to one side or the other until it can't slide any further without hitting one of the original polygons. Eventually it will stop as long as the lamination is non-empty.

Another question is whether there exists an invariant lamination that contains every periodic orbit in some list and no more periodic polygons. This is sometimes not the case. For one example, in degree 3, take $L$ to be the rotational triangles of \_001 and \_112. The leaf from \_0 to \_1 is forced. In the original lamination, the round gap where the leaf is forced is not critical. For an example in degree 2, take $L$ to be the orbit of the leaf from \_000101 to \_101000. A rotational triangle is forced in the non-critical part of the critical gap.

\begin{conj}
	Given a disjoint set of positively oriented periodic polygons, there exists an invariant lamination such that all the additional periodic polygons are of lower period than the originals.
\end{conj}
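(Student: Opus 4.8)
By \cref{existsinvariant} the given polygons lie in at least one invariant lamination, produced concretely by the construction preceding that theorem: extend to a forward invariant collection, complete every polygon to a sibling portrait via \cref{addsiblings}, insert $d-1$ critical chords into the critical gaps (possible by the degree count in \cref{freecriticality}), and pull back via \cref{criticalcords}. The plan is to run this construction with carefully chosen critical chords and to prove that the resulting pullback lamination $\lam$ --- the one obtained by repeatedly adjoining sibling preimages of the data and taking the closure --- has the property that every periodic gap of $\lam$ is either in the forward orbit of one of the original polygons or has strictly smaller period than the largest period occurring among them. This reduces the conjecture to a bound on the periods of periodic gaps in a pullback lamination.

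The first step is to choose the $d-1$ critical chords \emph{tamely}. Using the sliding argument in the paragraph after \cref{existsinvariant}, push each critical chord to one side until it abuts a polygon of the forward invariant collection; its abutting endpoint is then a vertex of a polygon that maps into the orbit of an original, and its other endpoint has the same image, so after at most one step the forward orbit of every critical-chord endpoint lies in the union of the (finitely many, finite) orbits of the original polygons. (If the geometry obstructs the slide before contact, choose instead a non-eventually-periodic angle in the relevant critical arc, which is a nondegenerate interval.) With such critical chords fixed, every leaf of $\lam$ arises from an original leaf or a critical chord by finitely many sibling pullbacks, and carries an \emph{itinerary} recording, at each step, which of the $d$ branches cut out by the critical chords was used. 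A periodic gap $Q$ not in the orbit of an original, together with its cycle $Q,\sigma_d(Q),\dots$, forces a periodic itinerary, and the heart of the argument is to bound the length of any such periodic itinerary by the largest original period. The available tools are the order-preservation facts of \cref{covering,iffpositive} (which fix exactly which vertices a pulled-back polygon must touch and in which cyclic order), \cref{LexLemma} (which controls the side of a pulled-back leaf relative to the originals), and the degree bookkeeping of \cref{freecriticality} (which limits how many genuinely new round gaps the critical chords create).

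The hard part will be precisely this bound: ruling out that the mutual interaction of the original polygons under repeated pullback spontaneously builds a long cycle of new gaps, even when the critical chords are tame. The two examples preceding the conjecture show that something new \emph{is} forced in general (a fixed leaf when $d=3$; a period-$3$ rotational triangle when $d=2$), so the claim concerns the \emph{period} of what is forced, not the absence of forcing. I would pursue two complementary routes. The direct, combinatorial route: show that a periodic gap of $\lam$ whose whole orbit is not among the originals must be separated from itself, somewhere along its orbit, by a leaf lying in the orbit of an original or of a tame critical chord; then tameness together with \cref{freecriticality} should force its period to divide the least common multiple of the original periods, and to be strictly smaller, since a full-length cycle would have to reuse the original leaves and hence coincide with an original. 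The softer, genericity route: the configurations of $d-1$ disjoint critical chords compatible with the sibling-completed originals form a nonempty polytope of positive dimension; for each candidate periodic polygon $R$ of period at least the threshold, the set of configurations whose pullback contains $R$ should be nowhere dense; and since there are only countably many such $R$, a Baire-category argument yields a configuration forcing none. The obstacle common to both routes --- and the reason this remains a conjecture --- is making the pullback mechanism quantitative enough to see that ``$R$ is forced'' is a genuinely exceptional condition; I expect the combinatorial route to be the one that closes the problem, with the case $d\ge 3$ (several interacting critical chords) being where the bookkeeping is most delicate.
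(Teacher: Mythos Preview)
The statement you are attempting to prove is presented in the paper as a \emph{conjecture}, not a theorem: the paper gives no proof and offers no proof sketch beyond the two motivating examples immediately preceding it. So there is no ``paper's own proof'' to compare against, and any complete argument would be a new result.

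Your proposal is explicitly a strategy outline rather than a proof, and you correctly identify the missing step yourself: neither route closes. A few concrete issues worth flagging. In the combinatorial route, the claim that the forced period must divide the lcm of the original periods and hence be strictly smaller is not sound as stated: if the originals have periods $2$ and $3$, the lcm is $6$, and nothing you have said rules out a forced polygon of period $6$, which divides the lcm yet exceeds every original period. The ``full-length cycle would coincide with an original'' clause only applies when all originals share the same period. In the genericity route, you assert that the configurations forcing a given periodic $R$ are nowhere dense, but the two examples preceding the conjecture show that forcing can happen on \emph{open} sets of critical-chord placements (the forced leaf or triangle appears in every invariant lamination containing the data, regardless of which compatible critical chords are chosen), so nowhere-density fails outright for those $R$. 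The conjecture is about the \emph{period} of what is unavoidably forced, and a Baire argument over critical-chord choices cannot see that distinction.

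In short: your write-up is a reasonable survey of the obvious attack lines and their obstacles, which is appropriate for an open problem, but it should not be labeled a proof.
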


Now consider generalizing \cref{existsinvariant} by (temporarily) expanding the definitions involved. First recall that a lamination is defined to include all points of the circle. A forward invariant lamination could contain a wandering triangle or a critical leaf with a pre-periodic endpoint or any number of other things. A finite forward invariant set has finitely many leaves (degenerate leaves don't count).

\begin{theorem}
	Given a finite forward invariant lamination, it is a subset of at least one invariant lamination iff each polygon in the forward invariant set either has a point as its image or maps as a positively oriented covering map.
\end{theorem}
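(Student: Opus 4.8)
\emph{Plan.} The statement strengthens \cref{existsinvariant} in two ways: the forward invariant set may now contain leaves that are not sides of polygons, and it may contain \emph{critical} polygons, whose image is a point or a leaf. I would prove the two implications separately, recycling the machinery of \cref{addsiblings}, \cref{freecriticality}, and \cref{criticalcords}.

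\emph{Necessity.} Suppose $\mathcal{L}$ lies in an invariant lamination $\mathcal{L}'$, and let $P$ be a polygon of $\mathcal{L}$ with at least three vertices (a lone leaf automatically satisfies the dichotomy, since it maps to a point or injectively onto a leaf). The first step is to observe that $P$ is itself a gap of $\mathcal{L}'$: every side of $P$ is a leaf of $\mathcal{L}\subseteq\mathcal{L}'$, and because leaves of a lamination do not cross, no leaf of $\mathcal{L}'$ can meet the interior of $P$---such a leaf would be a diagonal of $P$ crossing a side---so the interior of $P$ is a component of the complement of $\mathcal{L}'$. (The $\mathcal{L}'$-class of the vertices of $P$ may properly contain the $\mathcal{L}$-class, with further leaves attached at the vertices, but this does not affect the argument, since those leaves bound other gaps.) The second step is to invoke the standard fact that in a sibling $d$-invariant lamination every finite-sided gap maps either to a point or onto its image as a positively oriented covering map; this is gap invariance in the sense of \cite{thu09}, and in the finite-sided case it can also be read off from \cref{iffpositive} and the sibling structure. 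Applied to $P$, this is exactly the stated dichotomy.

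\emph{Sufficiency.} Here I mimic the proof of \cref{existsinvariant}. By \cref{criticalcords} it suffices to produce $d-1$ critical chords that are pairwise non-crossing, cross no leaf of $\mathcal{L}$, and together form no polygon. As $\mathcal{L}$ is finite and forward invariant it has finitely many leaves and finitely many gaps, and the hypothesis forces each \emph{critical} gap to be of one of three types: a polygon with a degenerate image, whose $k$ sides are themselves critical chords (retain a path of $k-1$ of them); a polygon of degree $m\ge 2$ whose image is a leaf or a larger polygon, inside which one may draw the $m-1$ chords joining successive preimages of a fixed vertex of the image among the vertices of $P$, none of which crosses a side of $P$; or a round gap of degree $d_i$, inside which $d_i-1$ critical chords may be concatenated as in the discussion preceding \cref{existsinvariant}. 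A critical leaf of $\mathcal{L}$ that is not a side of a polygon is already such a chord. Chords placed in distinct gaps cannot cross, and those within a single gap were chosen to form a path, so the whole collection forms no polygon. The number of chords produced is $\sum(d_i-1)$ over the critical gaps and critical leaves of $\mathcal{L}$, and this equals $d-1$ by the Riemann--Hurwitz identity behind \cref{freecriticality} and the discussion following it. Then \cref{criticalcords} delivers an invariant lamination containing $\mathcal{L}$.

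\emph{Main obstacle.} The crux is that last count: that for \emph{every} finite forward invariant lamination whose polygons map to points or as positively oriented covering maps, all gaps and critical leaves carry a well-defined degree and $\sum(d_i-1)=d-1$. I would establish it along the lines of \cref{freecriticality}(iii): first bring $\mathcal{L}$ to a configuration in which each positively oriented covering polygon lies in a sibling portrait by adapting \cref{addsiblings}, then pull the whole picture back through an auxiliary all-critical $d$-gon so that every side of every gap and every critical leaf acquires a local degree, and finally run the gap-by-gap count exactly as in \cref{freecriticality}(iii). The delicate point when adapting \cref{addsiblings} is a non-polygon leaf of $\mathcal{L}$ abutting a polygon whose siblings are being placed: for such a leaf whose image is disjoint from the polygon, \cref{LexLemma} furnishes the required ``equal numbers of siblings on each side'' statement, while a leaf whose image merely shares a vertex with the polygon needs a small separate argument. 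A secondary and routine check---done just as in the paragraph after \cref{existsinvariant}---is that the critical chords inside the critical polygons and round gaps can all be selected simultaneously non-crossing and polygon-free.
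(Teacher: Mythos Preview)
Your sufficiency argument is the paper's, unpacked. The paper's proof is a single paragraph: reinterpret ``polygon'' so that a point of the circle counts as one (with no sides), observe that under this reading a sibling portrait of a point may consist of points together with critical leaves, and then declare that every lemma of \cref{CCP}---in particular \cref{addsiblings} and \cref{freecriticality}---survives with at most an added case, so that the proof of \cref{existsinvariant} goes through verbatim. Your explicit placement of the $d-1$ critical chords inside the three types of critical gap, together with your ``main obstacle'' paragraph proposing to adapt \cref{addsiblings} and \cref{freecriticality}, is precisely this reinterpretation carried out in detail rather than asserted. Both routes terminate at \cref{criticalcords} via the same degree count $\sum_i(d_i-1)=d-1$; you are simply more explicit about which lemmas need the extra case and why.

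There is one genuine slip in your necessity argument. The claim that a leaf of $\mathcal{L}'$ meeting the interior of $P$ ``would be a diagonal of $P$ crossing a side'' is false: a diagonal of $P$ shares endpoints with two sides but crosses none of them, so nothing forbids $\mathcal{L}'$ from containing such a diagonal, and in that case $P$ is \emph{not} a gap of $\mathcal{L}'$ but a union of smaller gaps. The conclusion you want still follows---each sub-gap maps with positive orientation by the standard gap-invariance fact you cite, and an easy gluing shows $P$ then does too---but your argument as written does not reach it. A cleaner route bypasses gaps of $\mathcal{L}'$ entirely: apply the sibling condition of $\mathcal{L}'$ to a single side of $P$ and read off positive orientation from \cref{iffpositive}.
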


\begin{proof}
	Take the interpretation that a point of the circle counts as a polygon. Of course, it has no sides, so when we talk about ``all sides'', we make a vacuously true statement. Without changing the definition of sibling portrait (\cref{sibling}), the sibling portrait of a point can be a set of points. It could also include a critical leaf. Thus, a critical leaf is always part of a sibling portrait in a lamination. The definition of degree (\cref{degree}) plays nicely with this rereading of the term polygon. All the lemmas of the paper hold just as well with, at worst, an added case in the proof. \cref{criticalcords} requires the stipulation that a critical chord is compatible with a lamination that contains it. This interpretation is traditional under the convention that \emph{compatible} means the union is a lamination.

\end{proof}
\begin{conj}
	The word finite in the theorem above is unnecessary.
\end{conj}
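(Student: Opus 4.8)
The plan is to rerun the proof of the preceding theorem essentially unchanged, the only ingredients that were stated for finite objects being \cref{addsiblings} and the counting lemma \cref{freecriticality}; I would upgrade both to the countable setting and then invoke \cref{criticalcords}, which is already stated for an arbitrary forward invariant lamination. The ``only if'' direction needs no change whatsoever: neither the proof of \cref{covering} nor that of \cref{iffpositive} uses finiteness, so inside any invariant lamination a polygon whose image is not a point must map as a positively oriented covering map in order to carry a sibling portrait, and a polygon with a point image is never an obstruction.

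For ``if'', begin with the observation that a lamination has at most countably many polygons, since distinct inscribed polygons have disjoint interiors of positive area; enumerate the polygons of $\mathcal{L}$ as $P_1,P_2,\dots$. I would then add sibling portraits one polygon at a time using the mechanism in the proof of \cref{addsiblings}, so that after stage $k$ we have a lamination $\mathcal{M}_k\supseteq\mathcal{L}$ in which $P_1,\dots,P_k$ each lie in a sibling portrait, and set $\mathcal{L}_1=\overline{\bigcup_k\mathcal{M}_k}$. Since ``two leaves cross'' is an open condition, the leaves and polygons appearing in $\mathcal{L}_1$ are pairwise non-crossing, so $\mathcal{L}_1$ is a lamination, and it is forward invariant because the image of every inserted sibling is some $P_k\in\mathcal{L}$ and forward invariance passes to Hausdorff limits. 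The count $\sum_i(d_i-1)=d-1$ over the gaps of $\mathcal{L}_1$ is a Riemann--Hurwitz-type identity (as the paper already notes for the finite case) and does not need finiteness, so all but finitely many gaps have degree one, the criticality is finitely supported, and, exactly as in the finite case, $d-1$ disjoint critical chords can be placed into the critical gaps of $\mathcal{L}_1$ without forming a polygon. A final application of \cref{criticalcords} to $\mathcal{L}_1$ together with those $d-1$ chords produces an invariant lamination containing $\mathcal{L}$, completing the argument.

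I expect two related points to be the real work. First, the sibling-adding step of \cref{addsiblings} leans on \cref{LexLemma}, whose hypothesis is that the relevant image leaves are \emph{disjoint}; but while the polygons of a forward invariant \emph{set of polygons} have images that are pairwise disjoint or equal, the polygons of a forward invariant \emph{lamination} can have images that share a vertex or nest (because $\sigma_d(P)$ need not be a gap of $\mathcal{L}$), so one needs either a strengthening of \cref{LexLemma} tolerating image leaves that meet at an endpoint, or a more careful ``first polygon'' induction, to keep the hole-by-hole vertex count valid. Second, the passage to the closure $\mathcal{L}_1$ can create a polygon $Q=\lim Q_n$ out of inserted siblings with $\sigma_d(Q_n)=P_{k(n)}\to P_\infty\in\mathcal{L}$, hence $\sigma_d(Q)=P_\infty$, and one must check that $Q$ fits the sibling portrait already chosen for $P_\infty$ and that only finitely many further rounds are needed; the tool here is that the preimages of the vertices of a polygon sit in a fixed, evenly spaced, repeating circular order (the Remark following \cref{covering}), so siblings of nearby polygons are forced to be parallel. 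If these two points cannot be pushed through in full generality, the fallback would be to add the hypothesis that $\mathcal{L}$ is the closure of an increasing union of finite forward invariant laminations, salvaging the conjecture in that weaker form.
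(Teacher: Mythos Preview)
The paper records this statement as a \emph{conjecture} and offers no proof of it, so there is nothing to compare your attempt against. What you have written is therefore not a comparison exercise but an attempted resolution of an open question in the paper, and it should be read as such.

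Your outline has a gap you did not flag. You assert that a lamination has at most countably many polygons because ``distinct inscribed polygons have disjoint interiors of positive area.'' That is true for $n$-gons with $n\ge 3$, but the paper explicitly counts lone leaves as polygons (2-gons), and a chord has no two-dimensional interior. Forward invariant laminations under $\sigma_d$ can, and typically do, carry uncountably many leaves---any invariant lamination with a Cantor set of leaves is already forward invariant---so the enumeration $P_1,P_2,\dots$ on which your whole ``if'' direction rests is not available in general. The inductive sibling-adding procedure of \cref{addsiblings} is a one-polygon-at-a-time process and does not obviously survive transfinite or uncountable versions; you would need a genuinely different mechanism (for instance, producing the $d-1$ critical chords directly from the hypothesis, without first completing every polygon to a sibling portrait).

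The two difficulties you \emph{do} identify are real and, on their own, already leave the argument incomplete: the disjoint-image hypothesis of \cref{LexLemma} is not guaranteed once $\sigma_d(P)$ need not be a gap of $\mathcal{L}$, and passing to the closure $\mathcal{L}_1$ may produce new limit polygons whose sibling portraits have not been arranged. Your proposed fallback---restricting to laminations that are closures of increasing unions of finite forward invariant laminations---sidesteps the uncountability issue as well, but it is a strictly weaker statement than the conjecture.
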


\section{Counting Sibling Portraits}\label{CSP}

Given a round gap $R$ of degree $i$ and an $n$-gon, $P$, contained in $\sigma_d(R)$, let $F(i,n)$ denote the number of sibling portraits, and let $f(i,n)$ denote the number of sibling portraits with all polygons one-to-one. We observe a bijection with the full rooted $n$-ary trees, which are counted in \cite{rusu2021raney}. Though it is a function in two variables, that does not stop us from providing OEIS, \cite{OEIS}, references: $f(i,2)$ = A000108$(i)$, $f(i,3)$ = A001764$(i)$, $f(i,4)$ = A002293$(i)$, $f(i,5)$ = A002294$(i)$. These are the Fuss–Catalan numbers.

\begin{theorem}\label{firstcount}
	$$ f(i,n) = \frac{{ni \choose i}}{(n-1)i+1}$$
\end{theorem}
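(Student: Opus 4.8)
The plan is to identify $f(i,n)$ with the number of a certain family of noncrossing partitions, and then to invoke the Fuss--Catalan/Raney count via the bijection with full rooted $n$-ary trees hinted at above.

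First I would fix the combinatorial data. After collapsing the holes of $R$ (which changes neither the circular order of the relevant points nor which configurations of disjoint polygons occur), $\sigma_d|_{\partial R}$ pulls the $n$ vertices of $P$ back to $ni$ points arranged around the circle in $i$ repetitions of the cyclic order of $P$'s vertices (this is the Remark following \cref{covering}); number them $1,\dots,ni$ counterclockwise. I claim that one-to-one sibling portraits of $P$ in $R$ are exactly the noncrossing partitions of $\{1,\dots,ni\}$ into $i$ blocks of size $n$. For the forward direction: by \cref{covering} such a portrait touches all $ni$ points; by \cref{sibling} it consists of exactly $i$ polygons (each one-to-one $n$-gon contributes one side over each side of $P$); the polygons are disjoint; and each, being one-to-one and positively oriented (\cref{iffpositive}), uses one preimage of each vertex of $P$. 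Conversely, any noncrossing partition of $[ni]$ into blocks of size $n$ already has this shape: each arc between two circularly consecutive elements of a block contains only complete blocks, hence a multiple of $n$ points, so consecutive block elements differ by $1\bmod n$; thus every block is a transversal of the $n$ vertex-classes in the correct cyclic order, and its convex hull is a positively oriented $n$-gon mapping one-to-one onto $P$, the whole collection being a valid sibling portrait. Hence $f(i,n)$ counts noncrossing partitions of $[ni]$ into $i$ blocks of size $n$.

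Next I would set up the bijection with full rooted $n$-ary trees with $i$ internal nodes. Given such a partition, let $B$ be the block containing the point $1$; the $n$ vertices of $\mathrm{conv}(B)$ split the remaining $n(i-1)$ points into $n$ arcs $A_1,\dots,A_n$ (the arc $A_k$ lying under the $k$-th side of $\mathrm{conv}(B)$), with $|A_k|=n m_k$ and $\sum_k m_k = i-1$ (the analogue of this relation for the degrees of the gaps created inside $R$ is \cref{freecriticality}). By noncrossing-ness every other block lies entirely inside some $A_k$, so the partition restricts on each $A_k$ to a noncrossing partition into blocks of size $n$, which by induction corresponds to a full $n$-ary tree $T_k$ with $m_k$ internal nodes; attaching a root with ordered subtrees $T_1,\dots,T_n$ produces a full $n$-ary tree with $1+\sum_k m_k=i$ internal nodes, and the construction is inverted by peeling off the root. (The empty partition, $i=0$, corresponds to the one-vertex tree.) Finally, the number of full rooted $n$-ary trees with $i$ internal nodes --- equivalently with $(n-1)i+1$ leaves and $ni+1$ nodes --- is the Raney/Fuss--Catalan number $\frac{1}{(n-1)i+1}\binom{ni}{i}$ by \cite{rusu2021raney}, which is the asserted formula.

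I expect the main obstacle to be the first step: establishing precisely that one-to-one sibling portraits coincide with noncrossing size-$n$-block partitions. The forward inclusion is routine bookkeeping against \cref{sibling}, \cref{covering}, and \cref{iffpositive}, but the converse (realizability) is where something must be checked --- namely that ``noncrossing and all blocks of size $n$'' already forces the ``one preimage of each vertex, in positive cyclic order'' structure, which is the $\bmod\,n$ argument above and is exactly what makes the count depend only on $i$ and $n$. Once that identification is secured, the tree recursion and the Fuss--Catalan evaluation are standard and available in \cite{rusu2021raney}; the only care still required is to notice that the recursion genuinely terminates, which it does because we split along the $n$ sides of the distinguished first polygon (so each of the $n$ subproblems involves strictly fewer points) rather than merely ``removing'' that polygon.
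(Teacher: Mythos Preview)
Your argument is correct and follows essentially the same route as the paper: both establish a bijection between one-to-one sibling portraits and full rooted $n$-ary trees with $i$ internal nodes (your ``block containing $1$ splits the remaining points into $n$ arcs'' is exactly the paper's ``first CCW polygon divides its region into $n$ smaller regions''), and then invoke the Fuss--Catalan/Raney count from \cite{rusu2021raney}. Your intermediate passage through noncrossing partitions of $[ni]$ into size-$n$ blocks, together with the $\bmod\ n$ argument verifying that every such partition is realizable as a sibling portrait, is a welcome elaboration of a point the paper leaves to the figure, but it does not change the underlying strategy.
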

\begin{proof}
	One-to-one sibling portraits of an $n$ sided polygon in a degree $i$ round gap are in bijection with full, rooted, $n$-ary trees with $i$ internal vertices. Polygons correspond to internal vertices, and the trees are made according to the principle that the first CCW new polygon divides the region that it is in into $n$ smaller regions. In the bijection pictured in \cref{countfig1}, a depth-first traversal of the internal vertices in the tree would correspond to a CCW ordering of the polygons based on their first ray. Such trees are known to be counted by the closed-form formula above \cite{rusu2021raney}.

	\begin{figure}[ht]
		\includegraphics[width=6in]{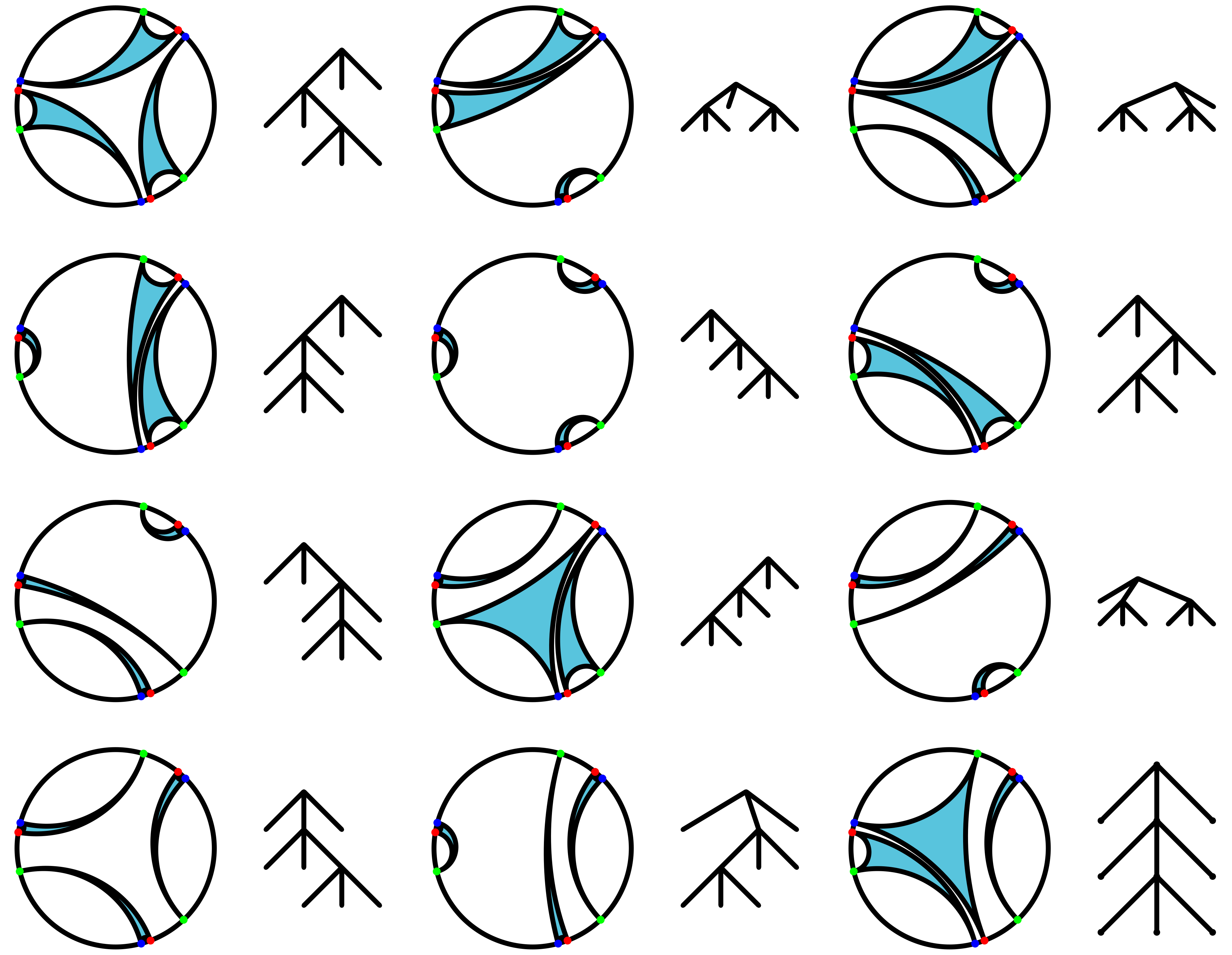} \\
		\caption{The bijection considered in the proof of \cref{firstcount} in the case of a degree three round gap with a triangle in its image.}
		\label{countfig1}

	\end{figure}
\end{proof}

\begin{theorem}\label{count2}
	$$F(i,n) = f(i,n+1)$$
\end{theorem}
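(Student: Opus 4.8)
The plan is to pass to ordinary generating functions. Put $G_n(x)=\sum_{i\ge 0}F(i,n)\,x^i$ and $B_{n+1}(x)=\sum_{i\ge 0}f(i,n+1)\,x^i$, with the conventions $F(0,n)=f(0,n+1)=1$ (the empty portrait; the single-leaf tree). By the bijection in the proof of \cref{firstcount}, $f(i,n+1)$ is the number of full rooted $(n+1)$-ary trees with $i$ internal vertices, and decomposing such a tree at its root into its $n+1$ ordered subtrees gives the functional equation $B_{n+1}=1+xB_{n+1}^{\,n+1}$. Since $Y=1+xY^{\,n+1}$ has a unique solution in $\mathbb{Q}[[x]]$ with constant term $1$ (the coefficient of $x^m$ on the right involves only the coefficients of $x^0,\dots,x^{m-1}$ of $Y$, so all coefficients are forced), it suffices to show that $G_n$ satisfies the same equation; then $G_n=B_{n+1}$, hence $F(i,n)=f(i,n+1)$.

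To get a recursion for $F(i,n)$, I would take a nonempty sibling portrait of an $n$-gon $P=x_1\cdots x_n$ inside a degree-$i$ round gap $R$ and single out its \emph{first polygon} $\Pi$ — the one whose first ray is smallest in the CCW order on first rays used in the proof of \cref{firstcount} — of some degree $k\ge 1$, so that $\Pi$ is an $nk$-gon cutting $R$ into $nk$ lunes $L_1,\dots,L_{nk}$ listed in CCW order (between the consecutive $\Pi$-vertices $v_s$ and $v_{s+1}$). The part of the portrait lying in $L_s$ is a sibling portrait of $P$ in $L_s$ of some degree $e_s\ge 0$ (degree $0$ meaning empty), and since the total degree of a portrait is the sum of the degrees of its polygons, $k+\sum_s e_s=i$, i.e.\ $\sum_s e_s=i-k$. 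The heart of the argument is that $\Pi$ is \emph{recovered} from the tuple $(e_1,\dots,e_{nk})$: by \cref{covering} the portion of the portrait in $L_s$ occupies exactly $n e_s$ of the preimages of $P$'s vertices, and these lie on the open boundary arc of $L_s$ strictly between $v_s$ and $v_{s+1}$; since the preimages of $P$'s vertices sit on $\partial R$ uniformly in the repeating cyclic order of $P$, the vertices of $\Pi$ are pinned to the cumulative positions dictated by $(e_s)$. Conversely, any tuple of nonnegative integers summing to $i-k$ places these vertices, and the resulting $\Pi$ is a genuine positively oriented degree-$k$ polygon over $P$ (its vertices run through a full cycle of $x_1,\dots,x_n$ exactly $k$ times, in order) whose lunes are round gaps of the prescribed degrees, with the sub-portraits in the distinct lunes free and non-crossing with $\Pi$ and with one another.

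Granting this, the number of portraits with first polygon of degree $k$ equals $\sum_{e_1+\cdots+e_{nk}=\,i-k}\prod_s F(e_s,n)=[x^{\,i-k}]\,G_n^{\,nk}$, so summing over $k\ge 1$ and adding the empty portrait yields $G_n=1+\sum_{k\ge 1}x^k G_n^{\,nk}=1+xG_n^{\,n}/(1-xG_n^{\,n})$, which rearranges to $G_n=1+xG_n^{\,n+1}$, as required. (Alternatively, one can avoid generating functions and turn the decomposition into an explicit bijection: encode a degree-$k$ polygon as a chain of $k$ nodes, each node getting $n$ of the lunes as its first $n$ subtrees and the next chain-node — or a leaf at the chain's end — as its $(n+1)$st subtree; this matches sibling portraits of an $n$-gon in a degree-$i$ round gap with full rooted $(n+1)$-ary trees on $i$ internal vertices, which by \cref{firstcount} also enumerate the one-to-one sibling portraits of an $(n+1)$-gon.)

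I expect the real work to be the recovery claim in the second paragraph: the careful bookkeeping that a lune carrying a degree-$e_s$ sub-portrait hugs exactly $n e_s$ preimage-vertices, that such a lune is an honest round gap of degree $e_s$ whose image still contains $P$ (which fails precisely when $e_s=0$), and that orientations never obstruct running the construction backwards. This is the same kind of vertex-counting and orientation argument already used in the proofs of \cref{iffpositive}, \cref{addsiblings}, and \cref{freecriticality}, so I expect it to go through, but it is where the care is needed.
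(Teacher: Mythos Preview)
Your argument is correct, but the method is quite different from the paper's. The paper builds an explicit bijection $B$ directly between the two families of portraits: starting from a one-to-one portrait of an $(n+1)$-gon, it fixes one vertex-color $X$, transitively merges polygons that are adjacent across an $X$-vertex, and then deletes all $X$-vertices and takes convex hulls; the inverse is obtained by re-inserting the unique admissible $X$-vertices and splitting. No trees or generating functions appear.

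Your route instead leverages \cref{firstcount}: you decompose an arbitrary portrait at its first polygon $\Pi$ of degree $k$, read off the lune degrees $(e_1,\dots,e_{nk})$, and use the vertex-counting of \cref{covering}/\cref{iffpositive} to see that $\Pi$ is uniquely recoverable from this tuple while the sub-portraits are independent. Summing the resulting geometric series gives the same functional equation $G_n=1+xG_n^{\,n+1}$ that characterizes $B_{n+1}$, and uniqueness of power-series solutions finishes it. The parenthetical ``chain of $k$ nodes'' bijection you sketch at the end is in fact a bijective reading of this same recursion, and it is also different from the paper's color-deletion map.

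What each approach buys: the paper's bijection is self-contained and does not rely on \cref{firstcount} or on the Fuss--Catalan identity; it also makes visible why the extra vertex-color behaves like a ``separator'' that records where higher-degree polygons were glued. Your approach reuses existing machinery (the tree encoding and the orientation/vertex bookkeeping already proved in \cref{iffpositive} and \cref{freecriticality}), and the generating-function step is clean once the first-polygon decomposition is in hand. The honest work, as you note, is the recovery claim and the identification of the lune sub-portrait count with $F(e_s,n)$; your outline of why this holds (the $ne_s$ preimage vertices in the open arc of $L_s$ sit in the correct cyclic order, with the $e_s=0$ case handled by the convention $F(0,n)=1$) is right.
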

\begin{proof}
	Let $B$ be the following function from the one-to-one portraits for $i$ and $n+1$ to the complete set of sibling portraits for $i$ and $n$. Take a sibling portrait from the domain. It is essentially a way of connecting $n+1$ colors, each with $i$ points into disjoint polygons. Selecting a color arbitrarily, $X$. Collect the polygons into groups by transitively adding the polygon that appears after the $X$ vertex of any polygon. The output sibling portrait is a set of convex hulls of the sets of vertices other than $X$ from all the polygons in each of the groups.

	\begin{figure}[ht]
		\includegraphics[width=6in]{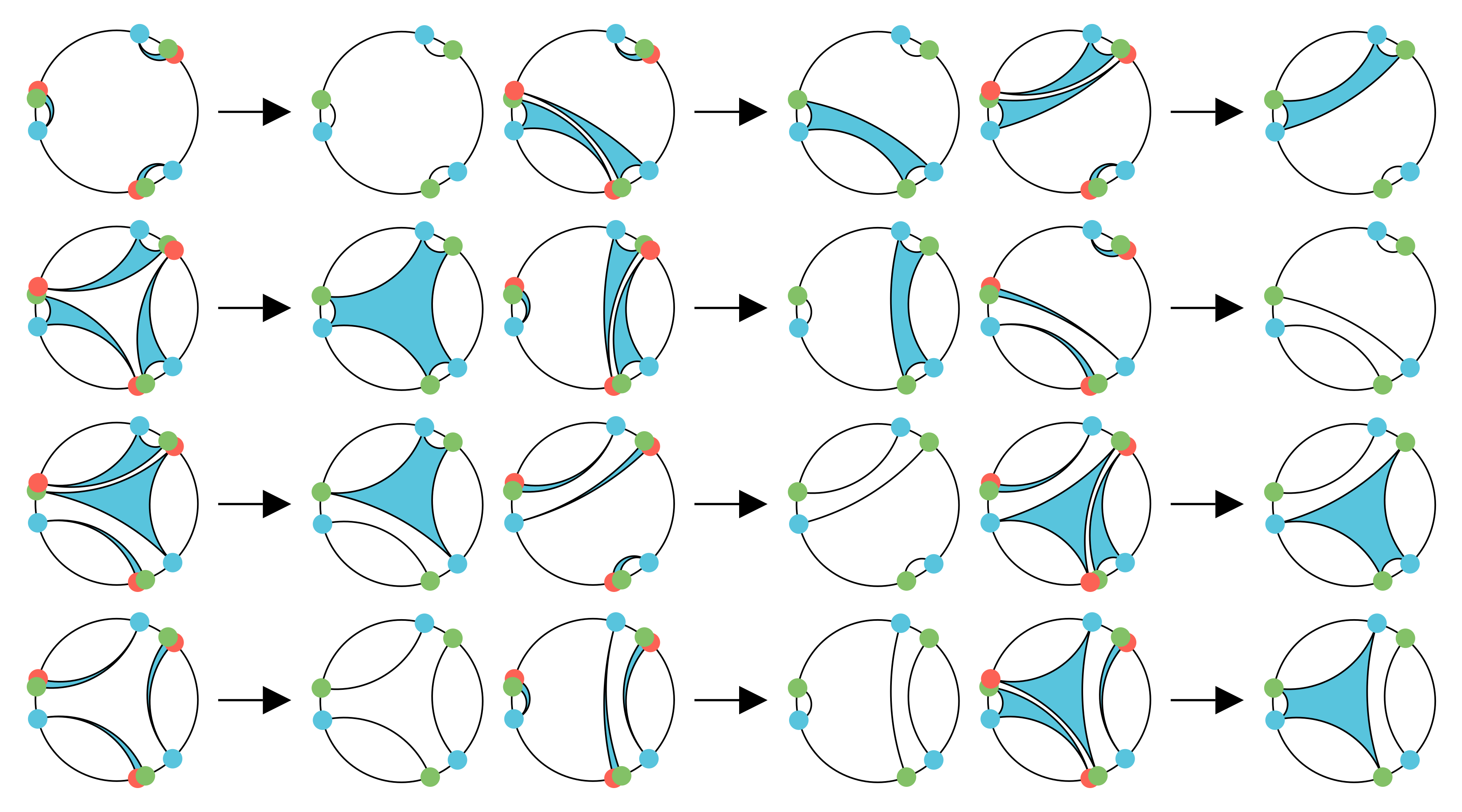} \\
		\caption{The bijection considered in the proof of \cref{count2} where the domain is the case of a degree three round gap with a triangle in its image.}
	\end{figure}

	$B$ has the ascribed range because at neither step could it cause polygons to cross. It first merges adjacent polygons. Next, it removes vertices, which is replacing a polygon with a smaller polygon (the convex hull of a subset of the vertices); therefore, it will still be a lamination.

	Without loss of generality, assume that the domain is the sibling portraits of a polygon with vertices in the positional order $abcd$. Let the range be sibling portraits of polygons of $abc$, and we correspond the vertices of each in the way suggested by the labeling, and of course $X=d$.

	We form the inverse of $B$. By \cref{covering,iffpositive}, all the polygons of a sibling portrait map as a positively oriented covering map. Consider a polygon from an element of the range. It is in the form $abcabcabc\dots$ in positional order. In the whole circle there are multiple $d$ points that we could add, but we are forced to add the ones appearing right before the $a$ vertices of this polygon. (Expanding the polygon in that way cannot cause things to cross since it is the $d$ directly before the $a$). Next we need to break this into polygons, and the only way to do this consecutively is ending with $d$. Suppose by way of contradiction that there is some $a$ vertex that we are allowed to connect a $d$ vertex to other than the $a$ preceding it, and finish that polygon as $P_1$. Consider an arc of the circle partitioned by that polygon and the vertices in that arc. At least one arc will have a sequence of vertices that does not start with $a$, and the set of polygons formed in that arc will not be placed with $P_1$ by $B$. Thus, there is exactly one sibling portrait of the domain for each of the range.
\end{proof}

\section{Finite Dynamic Laminations}\label{FDL}

\begin{definition}

	A lamination $\mathcal L$ is called a \emph{Finite Dynamical Lamination} or an \emph{FDL}, in degree $d$ if it is non-empty and satisfies these properties:
	\begin{enumerate}
		\item $\lam$ has finitely many leaves;
		\item for each leaf $\overline{ab}\in\mathcal{L}$, $\sigma_d(a) \neq \sigma_d(b)$;

		\item for each leaf $\ell\in\mathcal{L}$, $\sigma_d(\ell)\in\mathcal{L}$ (no critical leaves);

		\item there is a whole number, $n \geq 0$, such that each non-periodic leaf $\ell$ has a preimage in $\lam$ iff $\sigma^{n-1}_d(\ell)$ is periodic, and each periodic leaf has a non-periodic preimage in $\lam$ iff $n>0$;
		\item for each non-periodic leaf $\ell\in\mathcal{L}$, there exist $\mathbf d$ {\bf
				      disjoint} leaves $\ell_1, \dots, \ell_d$ in $\mathcal{L}$ such
		      that $\ell=\ell_1$ and $\sigma_d(\ell_i) = \sigma_d(\ell)$ for
		      all $i$;
		\item for each leaf $\ell$, $\ell\in\mathcal{L}$ iff $\ell$ is on the boundary of a convex hull of an equivalence class of $\lam$;
		\item the leaves of any periodic equivalence class of $\lam$ map as a covering with positive orientation.
	\end{enumerate}
\end{definition}

Given that the endpoints of all the leaves of such a lamination are eventually periodic, those endpoints are rational angles. Regarding the following definition, \emph{land} is taken to have the same meaning as in \cite{Orsay}, which establishes that a rational ray will always either land or crash into a critical point. We will show that the set defined is a non-empty set (\cref{realized}).

\begin{definition}
	The \emph{co-landing locus} of an FDL is the set of polynomials under which two dynamic rays land at the same point if their angles share an equivalence class of the FDL.
\end{definition}

\begin{proposition}
	Consider a polynomial with a lamination, $\mathcal L$. The polynomial is in the co-landing locus of an FDL, $\mathcal L'$, iff the equivalence relation of $\mathcal L$ is finer than the equivalence relation of $\mathcal L'$.
\end{proposition}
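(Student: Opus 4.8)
The proposition is at bottom an unwinding of the two definitions involved, and that is the plan: translate each side of the biconditional into a statement about which pairs of angles get identified, and observe that the two statements coincide. Write $\sim$ for the equivalence relation of $\mathcal{L}$ and $\sim'$ for that of $\mathcal{L}'$. By the defining property of the lamination of a polynomial, $\alpha\sim\beta$ holds exactly when the dynamic rays of $P$ at $\alpha$ and $\beta$ land at a common point; and, by definition, $P$ lies in the co-landing locus of $\mathcal{L}'$ precisely when, for every pair $\alpha,\beta$ with $\alpha\sim'\beta$, the dynamic rays of $P$ at $\alpha$ and $\beta$ land at a common point. Thus the proposition asserts the equivalence of ``$P$ in the co-landing locus of $\mathcal{L}'$'' with ``$\alpha\sim'\beta\Rightarrow\alpha\sim\beta$ for all angles $\alpha,\beta$'', and the latter is exactly what the proposition calls the equivalence relation of $\mathcal{L}$ being finer than that of $\mathcal{L}'$ (each $\sim'$-class lying inside a single $\sim$-class). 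It is also worth recording, using transitivity of the ``land at a common point'' relation together with the fact that $\sim'$ is generated by the leaf-endpoint pairs of $\mathcal{L}'$, that membership in the co-landing locus can equivalently be tested on just the finitely many leaves of $\mathcal{L}'$.

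The one step that is not purely formal --- and which I expect to be the main, though modest, obstacle --- is to guarantee that every dynamic ray being compared actually lands, so that the conditions above are meaningful for a completely arbitrary polynomial $P$ (recall that for a polynomial with disconnected Julia set not all dynamic rays land). First I would observe that every leaf of the FDL $\mathcal{L}'$ has a finite forward $\sigma_d$-orbit: $\mathcal{L}'$ has only finitely many leaves and, by its ``no critical leaves'' axiom, $\sigma_d$ carries leaves of $\mathcal{L}'$ into $\mathcal{L}'$, so the orbit stays in a finite set. Consequently the endpoints of every leaf of $\mathcal{L}'$ are eventually periodic under $\sigma_d$, hence rational, so whenever $\alpha\sim'\beta$ the angles $\alpha$ and $\beta$ are rational; by the cited result of \cite{Orsay} that every rational dynamic ray lands, both rays in question land, for any polynomial $P$. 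This is precisely the remark that already makes the co-landing locus well defined.

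With landing in hand, both implications are immediate. For the forward direction, assume $P$ lies in the co-landing locus of $\mathcal{L}'$ and let $\alpha\sim'\beta$; by definition the rays of $P$ at $\alpha$ and $\beta$ land at a common point, and by the defining property of $\mathcal{L}$ this is exactly $\alpha\sim\beta$. As $\alpha,\beta$ were an arbitrary $\sim'$-equivalent pair, every $\sim'$-class lies in a single $\sim$-class, i.e.\ the equivalence relation of $\mathcal{L}$ is finer than that of $\mathcal{L}'$. Conversely, assume the equivalence relation of $\mathcal{L}$ is finer than that of $\mathcal{L}'$ and let $\alpha,\beta$ share a $\sim'$-class; then $\alpha\sim\beta$, so unwinding the definition of $\mathcal{L}$ once more, the rays of $P$ at $\alpha$ and $\beta$ land at a common point. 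Since $\alpha,\beta$ ranged over an arbitrary $\sim'$-equivalent pair, $P$ lies in the co-landing locus of $\mathcal{L}'$, completing the equivalence.
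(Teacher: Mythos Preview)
The paper states this proposition without proof, treating it as an immediate consequence of the definitions of ``co-landing locus'' and ``the lamination of a polynomial''; your definitional unwinding is exactly the argument the paper leaves implicit, and it is correct.

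One small remark: your middle paragraph, on the landing of rational rays, is more than the proposition strictly requires. The hypothesis is that the polynomial already \emph{has} a lamination $\mathcal L$, and the paper has defined the equivalence relation of $\mathcal L$ precisely as the co-landing relation of the polynomial's dynamic rays; so for such a polynomial the translation ``$\alpha\sim\beta$ iff the rays at $\alpha$ and $\beta$ co-land'' is given, not something to be secured. The rational-ray landing result from \cite{Orsay} is what the paper invokes to make the co-landing locus itself well defined for \emph{arbitrary} polynomials (including those without a lamination), which is a slightly different issue. Your argument is not wrong for including it, just a bit more than needed here.
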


The following sections first introduce the pullback tree, and then later the generational FDL graph. The latter is perhaps the more illuminating structure for understanding parameter space, but its vertices are those from a single level of the former.

It is possible to surjectively parameterize the set of FDL, with a natural number, $n$, a set of periodic leaves, $P$, and a pullback scheme. We take a slice of the set of FDL by specifying $P$.

\begin{definition}
	Given a set of periodic leaves constituting an FDL, $P$, the \emph{pullback tree} is defined as the graph where
	\begin{enumerate}
		\item All FDL having exactly $P$ as its periodic leaves are the vertices.
		\item Two vertices (laminations) are joined by an edge iff one is the image of the other, and they are different.
	\end{enumerate}
\end{definition}

The pullback tree is connected since all added leaves are preperiodic. The pullback tree is a tree, since any loop would require some lamination to have two images or that some lamination eventually, but not immediately, maps to itself. The latter case is impossible because each lamination must include its image as a subset. Therefore, we additionally impose that:
\begin{enumerate}
	\setcounter{enumi}{2}
	\item The root of the pullback tree is the lamination that is its own image.
\end{enumerate}

With this stipulation, we establish the meaning of the terms parent and child. We see that all FDL have children from \cref{existsinvariant}.

\begin{figure}[ht]
	\includegraphics[width=\linewidth]{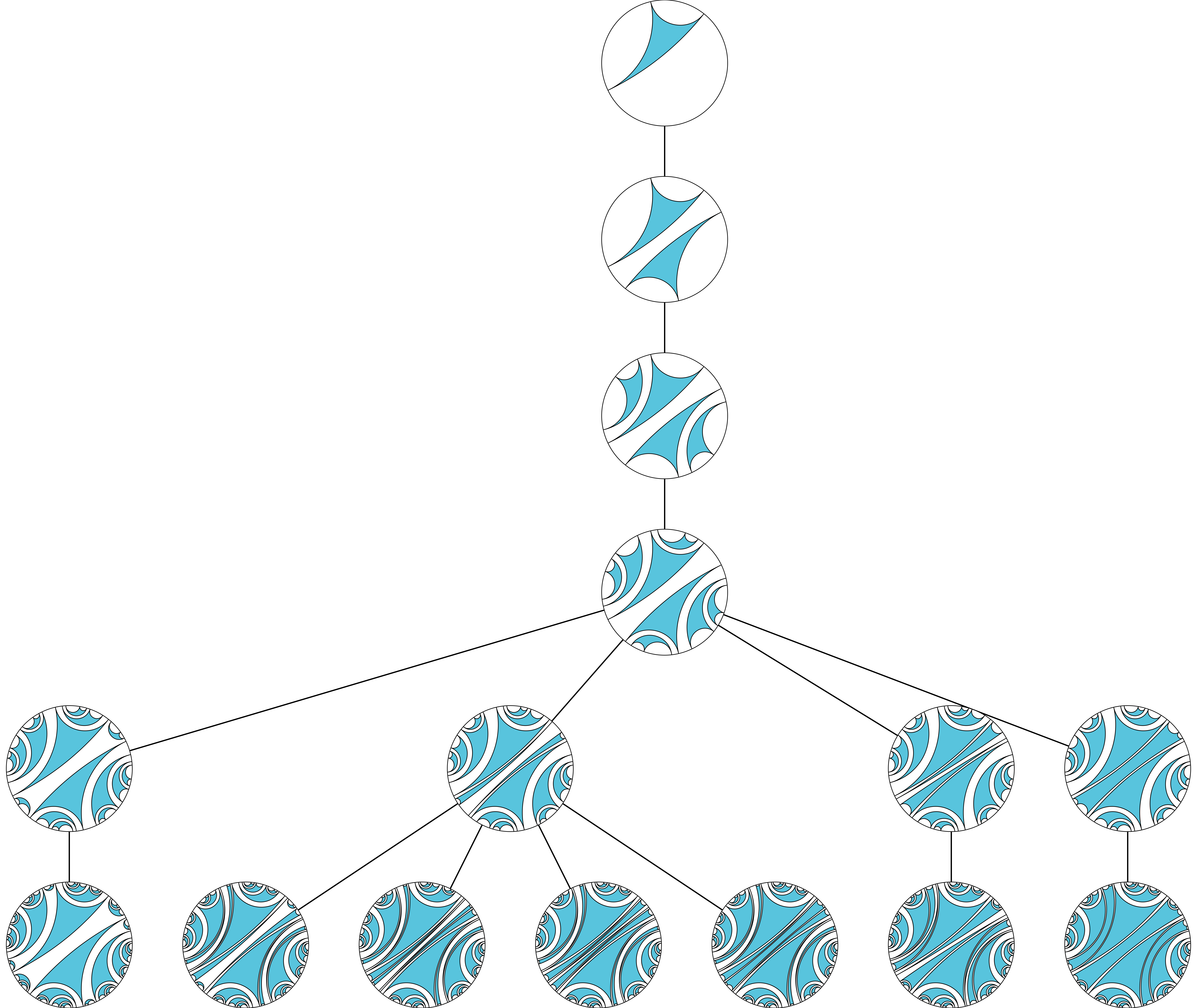}
	\caption{The pullback tree starting with the polygon with vertices \{$\_001$,$\_010$,$\_100$\} up to level 5, also known as the rabbit pullback tree.}
\end{figure}

\section{What are the limit sets of FDL?}\label{Limit}

The goal of this section is to establish some facts about the limit of a sequence of FDL. Nothing good follows from the definition of FDL. The set of finite dynamical laminations is not closed and contains none of its limits. The term ``limit lamination'' has an existing meaning\cite{blokh2015combinatorialmandelbrotsetquotient}, so the limit of a sequence of FDL will be referred to as an ``end lamination.'' First, we situate FDL inside a larger set of laminations. Then after considering how bad sequences of FDL are in general, we impose a fairly obvious restriction and find that such sequences limit to reasonably nice laminations.

Of course, we wish to classify these sequences by how nice their limits are. For that purpose, we summarize the definitions of a few terms that we use sparsely. The most important term is \emph{q-lamination}, which is defined in \cite{blokh2012laminationslanguageleaves} as a lamination having an equivalent relation that is \emph{laminational} with the added convention that a leaf is in the lamination iff it is in the boundary of the convex hull of an equivalence class. \emph{Laminational} is also defined in \cite{blokh2012laminationslanguageleaves} and refers to an equivalence relation that meets several conditions that are each necessary for the lamination to be realized by a polynomial in the sense contemplated here. One such condition is that the equivalence classes are finite. These terms generally presuppose that the lamination is invariant. Unclean is defined in Thurston as having 3 leaves meet at a point. Unclean laminations are not q-laminations, but they might be \emph{proper}, in which case their equivalence relations are laminational. When we say that a lamination is not proper, we mean that it has infinite equivalence classes. To summarize, an unclean sibling lamination can be cleaned into a q-lamination iff it is proper.

In our examples, which are quadratic, we give a few more specific labels, mainly from \cite{blokh2015combinatorialmandelbrotsetquotient}. We say that a lamination is hyperbolic if there is a critical gap with uncountably many sides. If there is an uncountable gap that returns with degree 1, then we say that it and the lamination are \emph{Siegel}. We call a gap caterpillar if it has countably many leaves. In accordance with folklore, we say that a lamination is sub-hyperbolic if there is a finite equivalence class that is critical and preperiodic.

\begin{definition}
	The distance between two leaves is the sum of distances on the circle between the first endpoints of the leaves and the second endpoints of the leaves (we decide which endpoint is first in such a way that it minimizes the distance). The distance between two laminations, $\mathcal A$ and $\mathcal B$, is the Hausdorff distance from the set of leaves and degenerate leaves of $\mathcal A$ to the set of leaves and degenerate leaves of $\mathcal B$ taking the distance leaf-wise.
\end{definition}

The inclusion of the degenerate leaves in the distance computation is harmless, but seemingly optional, and is not used to any particular advantage. The perceived benefit of this distance metric is that the distance from one FDL to its child is at most the length of the longest leaf in their complement. In fact, we can conjugate the round gap that we are placing the sibling portrait inside with a circle, and up to a constant multiplier, the distance from the original lamination to the one containing the sibling portrait will be the same as from the circle to the conjugated copy of the sibling portrait.

\begin{lemma}\label{compact metric space}
	The set of laminations is a compact metric space.
\end{lemma}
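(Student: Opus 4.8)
The plan is to realize the set of laminations as a closed subset of a hyperspace that is already known to be compact. Write $\mathbb{S}$ for the unit circle and let $X$ be the set of all chords of the closed disk, degenerate ones included, equipped with the leaf-wise distance of the statement, namely $d_X(\overline{ab},\overline{cd}) = \min\{d_{\mathbb S}(a,c)+d_{\mathbb S}(b,d),\, d_{\mathbb S}(a,d)+d_{\mathbb S}(b,c)\}$, where $d_{\mathbb S}$ is arclength distance on the circle. A short check shows $d_X$ is a genuine metric: symmetry and positivity are immediate, and the triangle inequality follows from the triangle inequality on $\mathbb S$ after composing the endpoint-pairings that realize each of the three terms. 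The map $\mathbb S\times\mathbb S \to (X,d_X)$, $(a,b)\mapsto\overline{ab}$, is surjective and $1$-Lipschitz for the $\ell^1$ product metric, so $X$ is a continuous image of a compact space, hence compact. By the standard hyperspace theorem — the Hausdorff metric $d_H$ makes the collection $\mathcal K(X)$ of nonempty closed subsets of a compact metric space into a compact metric space, proved by a diagonal/selection argument in the spirit of the Blaschke selection theorem — $(\mathcal K(X),d_H)$ is a compact metric space. Since every lamination (as a subset of $X$) already contains all degenerate leaves, the distance defined on laminations is exactly the restriction of $d_H$ to those members of $\mathcal K(X)$ that are laminations; so it suffices to prove the set $\Lambda$ of laminations is a \emph{closed} subset of $\mathcal K(X)$, as a closed subspace of a compact metric space is itself a compact metric space.

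I would then characterize $\Lambda$ inside $\mathcal K(X)$ by two conditions: $K\in\Lambda$ iff (i) $K\supseteq D$, where $D\subseteq X$ is the set of degenerate leaves, and (ii) no two chords of $K$ meet in the open disk. Condition (i) cuts out a closed set: if $K_n\to K$ in $d_H$ with $D\subseteq K_n$ for all $n$, then for each $\delta\in D$ we have $d_X(\delta,K)\le d_H(K_n,K)\to 0$, so $\delta$ lies in the closed set $K$. The whole content is therefore to show condition (ii) is also closed under Hausdorff limits.

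For (ii) the key observation is that ``$\ell$ and $\ell'$ meet in the open disk'' is an \emph{open} condition on the pair $(\ell,\ell')\in X\times X$: two chords meet in the interior precisely when their four endpoints are pairwise distinct and interleave in the cyclic order on $\mathbb S$, a strict combinatorial condition on four points that persists under small perturbation. Granting this, suppose $K_n\to K$ with each $K_n$ a lamination but $K$ containing two distinct leaves $\ell,\ell'$ meeting in the open disk. Hausdorff convergence supplies $\ell_n,\ell_n'\in K_n$ with $\ell_n\to\ell$ and $\ell_n'\to\ell'$; since $\ell\ne\ell'$ we have $\ell_n\ne\ell_n'$ for large $n$, and by openness of the crossing condition $\ell_n$ and $\ell_n'$ meet in the open disk for large $n$, contradicting that $K_n$ is a lamination. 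Hence (ii) is closed, so $\Lambda$ is closed in $\mathcal K(X)$ and $(\Lambda,d_H)$ is a compact metric space. The main obstacle is exactly this last paragraph: pinning down the meaning of ``cross'' so that it is manifestly an open condition, and ensuring the approximating leaves can be taken distinct. The compactness of $X$ and of $\mathcal K(X)$, and the metric axioms for $d_X$, are routine and are where the remaining hypotheses get used.
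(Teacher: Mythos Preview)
Your proof is correct and follows the same line as the paper's: both reduce compactness to showing that the set of laminations is closed inside the Hausdorff hyperspace of closed sets of chords, and both establish closedness by arguing that ``two leaves cross'' is an open condition (the paper via an $\varepsilon/3$ estimate on the overlap of the subtended arcs, you via openness of strict cyclic interleaving of the four endpoints). Your treatment is considerably more explicit than the paper's brief version, which leaves the hyperspace-compactness step and the degenerate-leaves condition implicit.
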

\begin{proof}
	The maximum distance from one lamination to another is at most the greatest distance from one leaf to another, which is $\frac{1}{2}$. As for the closed part of the claim, suppose by contradiction that the limit of a sequence of laminations had a pair of crossing leaves. The arcs subtended by those leaves overlap in an arc of length $\epsilon>0$. Select from the sequence, a lamination such that every leaf of the limit has a leaf of the lamination within $\frac{\epsilon}{3}$. The leaves of the lamination close to the crossing leaves in the limit would have to cross because the arcs subtending them must have an intersection of length at least $\frac{\epsilon}{3}$.
\end{proof}

\noindent
\begin{minipage}[t]{0.55\textwidth}
	\noindent
	The traditional approach to laminations, described in \cite{aziz2023fixedflowers}, is to take a set of critical chords and a rotational polygon and then pull back the polygons according to the critical chords infinitely many times and consider the limit, which is called a pullback lamination. Notably, the critical chord is often not in the end lamination, but is often pictured in blue with the lamination (Pictures showing this were made using \cite{Falcione}.) The canonical lamination is the lamination generated using a canonical critical portrait in the sense of \cite{aziz2023fixedflowers}. We diverge from this approach by trying to focus on the FDL in the sequence instead of the critical chords. In the Appendix, we go so far as to develop the notion of canonical pullback laminations without critical chords.
\end{minipage}%
\hfill
\begin{minipage}[t]{0.4\textwidth}
	\vspace*{0cm}
	\includegraphics[width=0.8\linewidth]{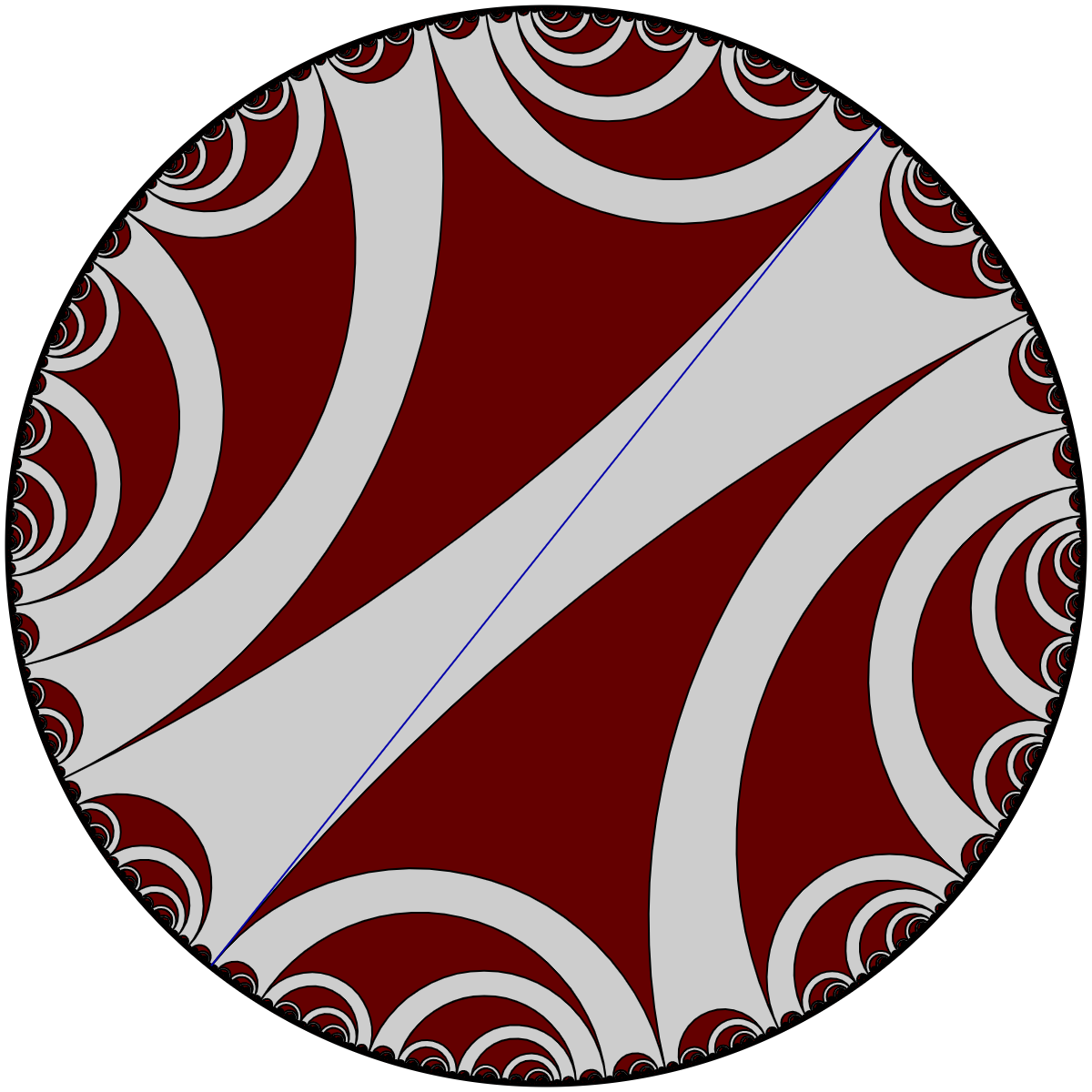}
	\captionof{figure}{The canonical rabbit pullback lamination.}
	\label{fig:canonical}
\end{minipage}

Of course, a sequence of FDL might be a constant sequence or contain a constant subsequence. Suppose it does not. It might oscillate. Suppose it converges to a lamination not in the sequence. Surely then it will be an invariant lamination?

\begin{ex} \label{non-invariant}
	There is a non-constant converging sequence of FDL whose limit is not invariant. Consider the canonical rabbit lamination, and consider the triangles on the boundary of the image of the critical gap. Form a converging sequence of those triangles. Preferably the limit of this sequence should be the singleton at $\_010$.
	Next, for each triangle in the sequence, form the minimal FDL containing the hexagon that maps 2 to 1 onto that triangle. The limit of this sequence of FDL is the rabbit lamination with a critical leaf touching the periodic point. But that critical leaf has no preimage.
\end{ex}

\begin{definition}
	A sequence to the \emph{end of the pullback tree} is an infinite sequence of FDL such that each lamination is the image of the next lamination and such that the first element of the sequence is the root of the tree. The limit of such a sequence is an \emph{end of the pullback tree}, and the set of \emph{ends of the pullback tree} is the \emph{base of the pullback tree}.
\end{definition}

Before we discuss the theory of such sequences, we should contemplate an intermediate kind of sequence of FDL. By that, we mean an element of the closure of the base of the pullback tree. The next example shows this distinction to be meaningful, but how is it a sequence of FDL? By the definition of convergence, such a lamination is the limit of a sequence of FDL. The stipulation on such a converging sequence of FDL is that $\forall \epsilon, \exists N, n>N \Rightarrow$ the distance from the FDL to the nearest element of the base of the pullback tree is less than $\epsilon$. \cref{non-invariant} is not such a sequence, and it could not be. We know from \cref{siblinginvariant} and the statement of compactness in \cite{blokh2015combinatorialmandelbrotsetquotient} that the closure of the base of the pullback tree contains only sibling invariant laminations. But there is nothing else nice to say about those laminations. We know that there are improper laminations in the closure of the base of the pullback tree. \par
\noindent
\begin{minipage}[t]{0.55\textwidth}

	\noindent
	\begin{ex}\label{ex56}
		The base of the pullback tree is not closed. Consider the sequence of invariant laminations such that each one is the invariant lamination containing each FDL from \cref{non-invariant}. The limit of the sequence is pictured and is not proper with a critical leaf touching the periodic point. To see that this is not in the base of the pullback tree, consider that it violates both \cref{proper} and \cref{criticalsides}.

		The base of the rabbit pullback tree contains a lamination with each hexagon or critical leaf whose image is under the minor of the canonical rabbit lamination. (The minor is the image of the longest leaf.)
	\end{ex}
\end{minipage}%
\hfill
\begin{minipage}[t]{0.4\textwidth}

	\vspace*{0cm}
	\includegraphics[width=0.8\linewidth]{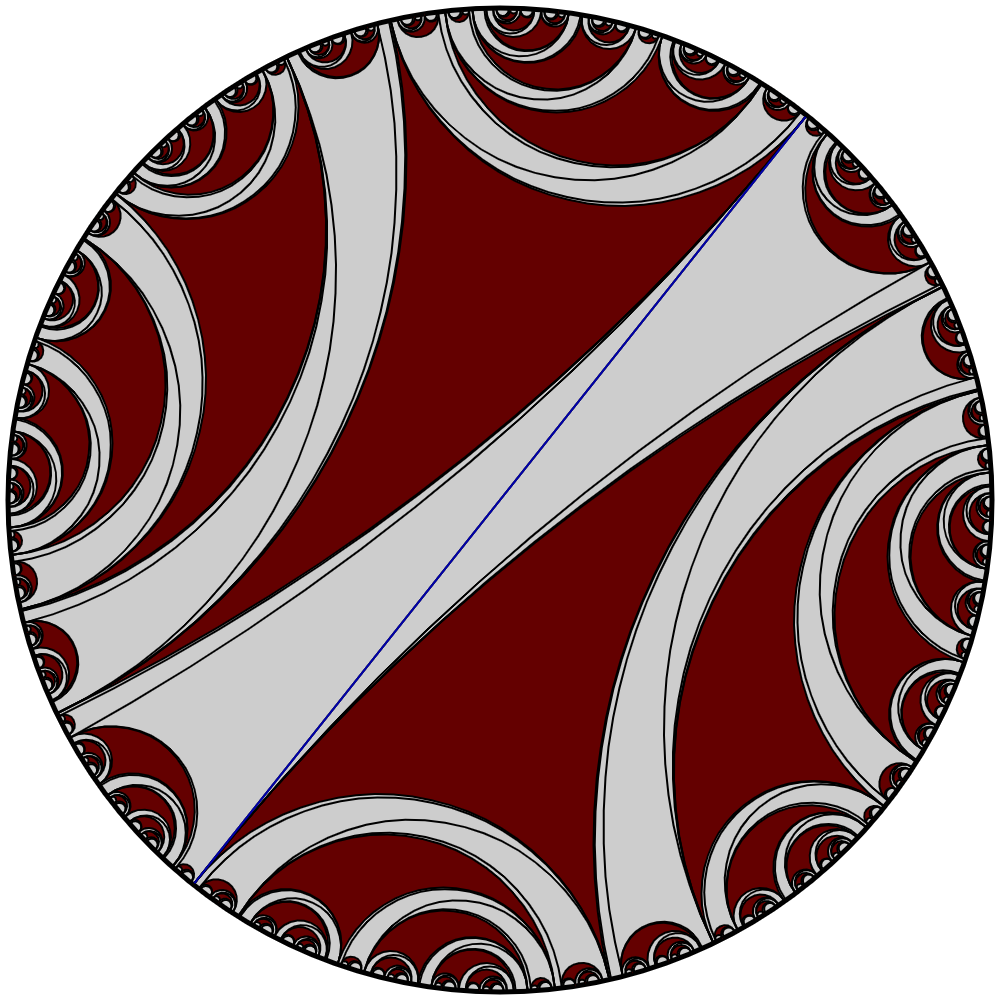}
	\captionof{figure}{\cref{ex56}}
\end{minipage}

\vspace*{0cm}
This shows that there is no lesser restriction on the sequences of FDL that provides reasonably useful laminations in the limit. Thus, we will spend considerable time on sequences to the end of the pullback tree because there is no greater generality to consider.

\begin{lemma}\label{limit contains}
	Any sequence to the end of the pullback tree converges, and the limit is a proper superset of any FDL in the sequence.
\end{lemma}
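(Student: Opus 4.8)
The plan is to realize the sequence as an increasing chain of laminations and show it converges to the closure of its union. Write the given sequence as $\mathcal{L}_0,\mathcal{L}_1,\mathcal{L}_2,\dots$, where $\mathcal{L}_0$ is the root and $\mathcal{L}_n$ is the $\sigma_d$-image of $\mathcal{L}_{n+1}$. First I would record that this is an increasing chain: by condition (3) in the definition of an FDL, $\mathcal{L}_{n+1}$ contains $\sigma_d(\ell)$ for each of its leaves $\ell$ (it has no critical leaves, and by condition (2) each such image is nondegenerate), so the image lamination $\mathcal{L}_n=\sigma_d(\mathcal{L}_{n+1})$ is a subset of $\mathcal{L}_{n+1}$ — this is exactly the ``each lamination contains its image as a subset'' fact used to argue the pullback tree is a tree. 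Since the sequence traces an infinite branch of the pullback tree and only the root is its own image, consecutive laminations past the initial (possibly empty) root-segment are distinct, so the chain is eventually strictly increasing; in particular, for every $m$ there is some $n>m$ with $\mathcal{L}_m\subsetneq\mathcal{L}_n$.

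Next I would prove convergence. Set $\mathcal{L}_\infty:=\overline{\bigcup_n\mathcal{L}_n}$, the closure taken in the compact space of chords of the disk, and claim $\mathcal{L}_n\to\mathcal{L}_\infty$ in the Hausdorff metric. One direction is immediate since $\mathcal{L}_n\subseteq\mathcal{L}_\infty$. For the other, fix $\epsilon>0$; since $\mathcal{L}_\infty$ is compact and $\bigcup_n\mathcal{L}_n$ is dense in it, cover $\mathcal{L}_\infty$ by finitely many $\epsilon$-balls with centers in $\bigcup_n\mathcal{L}_n$, and choose $N$ large enough that all of these centers lie in $\mathcal{L}_N$ (possible because the $\mathcal{L}_n$ increase). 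Then for $n\ge N$ every chord of $\mathcal{L}_\infty$ is within $\epsilon$ of a chord of $\mathcal{L}_n$, so $d_H(\mathcal{L}_n,\mathcal{L}_\infty)\le\epsilon$; this is just the standard fact that an increasing sequence of closed subsets of a compact metric space Hausdorff-converges to the closure of its union. By \cref{compact metric space} the set of laminations is a closed subset of this compact space, so the limit $\mathcal{L}_\infty$ is again a lamination, and it is the limit of the sequence.

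Finally, properness: $\mathcal{L}_\infty\supseteq\bigcup_n\mathcal{L}_n\supseteq\mathcal{L}_m$ for every $m$, so the limit is a superset of each FDL in the sequence; and it is a proper superset because, by the first step, some $\mathcal{L}_n$ with $n>m$ already properly contains $\mathcal{L}_m$ while $\mathcal{L}_n\subseteq\mathcal{L}_\infty$, so $\mathcal{L}_m\neq\mathcal{L}_\infty$.

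I expect the only genuine subtlety to be the convergence step, and within it the point that the limit is honestly a lamination rather than merely a closed set of chords — this is precisely where \cref{compact metric space} is invoked — together with the small amount of bookkeeping needed to exclude the degenerate situation in which the sequence is constantly the root (which is not a sequence ``to the end'' of the tree). The chain being increasing, the Hausdorff estimate, and the proper containment are all routine.
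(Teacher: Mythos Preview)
Your argument is correct and follows essentially the same route as the paper: observe the chain is increasing by forward invariance, take the closure of the union as the limit, and appeal to \cref{compact metric space}. Your write-up is more careful than the paper's---you give an explicit finite-cover estimate for the Hausdorff distance, separate out the proper-containment step, and flag the degenerate constant-root sequence---but these are refinements of the same idea rather than a different approach.
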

\begin{proof}
	By forward invariance, each lamination is a superset of the next. Indeed, the FDL grow with depth. Consider as a candidate limit, the closure of the union of all the FDL in the sequence. Since the FDL grow to eventually contain every element of the union which in turn has a leaf within epsilon of any leaf of the candidate lamination, the candidate is a limit of the sequence. By \cref{compact metric space}, the limit is unique.
\end{proof}

\begin{lemma}\label{siblinginvariant}
	The laminations in base of the pullback tree are sibling-invariant laminations.
\end{lemma}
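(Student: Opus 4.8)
The plan is to fix a sequence $\mathcal L_0\subseteq\mathcal L_1\subseteq\cdots$ to the end of the pullback tree (so $\mathcal L_0$ is the root and $\sigma_d(\mathcal L_{k+1})=\mathcal L_k$ for every $k$) and to check the three clauses of sibling $d$-invariance for its limit $\mathcal L_\infty$. By \cref{limit contains} the sequence converges and $\mathcal L_\infty=\overline{\bigcup_k\mathcal L_k}$, and by \cref{compact metric space} $\mathcal L_\infty$ is a lamination. The two soft clauses fall out of approximation together with continuity of $\ell\mapsto\sigma_d(\ell)$ on the compact space of chords of the disk. Given a genuine leaf $\ell\in\mathcal L_\infty$, write $\ell=\lim_j\ell^{(j)}$ with $\ell^{(j)}$ a leaf of $\mathcal L_{k_j}$ for large $j$. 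Since each $\mathcal L_k$ is an FDL it is forward invariant with no critical leaves, so $\sigma_d(\ell^{(j)})\in\mathcal L_{k_j}\subseteq\mathcal L_\infty$ and hence $\sigma_d(\ell)\in\mathcal L_\infty$ (or is a point), giving clause (1). And since $\sigma_d(\mathcal L_{k_j+1})=\mathcal L_{k_j}$, the leaf $\ell^{(j)}$ has a preimage $m^{(j)}\in\mathcal L_{k_j+1}\subseteq\mathcal L_\infty$; any subsequential limit $m$ of the $m^{(j)}$ lies in $\mathcal L_\infty$ and satisfies $\sigma_d(m)=\ell$, giving clause (2).

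Clause (3)---that a leaf $\ell$ with $\sigma_d(\ell)$ non-degenerate lies in a collection of $d$ pairwise disjoint leaves all mapping to $\sigma_d(\ell)$---is the real content, and I would treat $\ell$ non-periodic and $\ell$ periodic separately. Suppose $\ell$ is not periodic. Because every vertex of the pullback tree has exactly the finitely many leaves of $P$ as its periodic leaves, every leaf $\ell^{(j)}$ of $\mathcal L_{k_j}$ sufficiently close to $\ell$ is non-periodic, so by clause (5) in the definition of FDL each $\ell^{(j)}$ carries a disjoint sibling collection $\ell^{(j)}=\ell^{(j)}_1,\dots,\ell^{(j)}_d\subseteq\mathcal L_{k_j}\subseteq\mathcal L_\infty$. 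Passing to a subsequence along which the combinatorial type is constant, $\ell^{(j)}_i\to\ell_i\in\mathcal L_\infty$ with $\ell_1=\ell$ and $\sigma_d(\ell_i)=\sigma_d(\ell)$ for all $i$. The only thing to verify is that the limit leaves remain distinct and pairwise disjoint; this holds because, writing $\sigma_d(\ell)=\overline{a^*b^*}$, the $2d$ endpoints used by the $\ell^{(j)}_i$ converge to the $d$ preimages of $a^*$ together with the $d$ preimages of $b^*$, and these $2d$ points are pairwise distinct precisely because $a^*\ne b^*$ and $\sigma_d$ is exactly $d$-to-one, so no two of the (non-crossing) limit leaves can coincide or share an endpoint.

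The periodic case is where I expect the real obstacle, since the definition of FDL gives no siblings of a periodic leaf directly. Instead I would produce a disjoint sibling collection of $\ell^*:=\sigma_d(\ell)$ that contains $\ell$ already inside $\mathcal L_1$, hence inside $\mathcal L_\infty$. As $\mathcal L_1$ is an FDL with $n=1$, clause (4) supplies a non-periodic preimage $m\in\mathcal L_1$ of $\ell^*$, and clause (5) supplies $m$ with a disjoint sibling collection $S=\{m=\mu_1,\dots,\mu_d\}\subseteq\mathcal L_1$ of preimages of $\ell^*$. If $\ell\in S$ we are done; otherwise $\{\ell\}\cup S$ would be $d+1$ leaves joining preimages of $a^*$ to preimages of $b^*$, which is impossible once one knows those $d+1$ leaves are pairwise disjoint. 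That disjointness is routine when the equivalence class $Q$ of $\ell$ satisfies $\sigma_d(Q)\ne\ell^*$, that is $\sigma_d(Q)$ has at least three sides: then $\ell$ shares no endpoint with any $\mu_i$ lying on a preperiodic class, and distinct edges of $Q$ with common image $\ell^*$ are non-adjacent, hence disjoint. The delicate configurations---a periodic $2$-gon $Q=\ell$, and a periodic polygon $Q$ of degree $>1$ whose image is a $2$-gon---are the ones I would concentrate on: there ``disjoint sibling collection'' must be read in the $2$-gon sense of \cref{iffpositive}, the abstract existence of the desired collection is guaranteed by \cref{addsiblings} applied to $P$, and it is clause (3) (no critical leaves) together with clause (6) (leaves are edges of equivalence classes) that forces $\mathcal L_1$ to realize it through $\ell$ rather than a competing collection. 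Once clause (3) is established, $\mathcal L_\infty$ is a sibling $d$-invariant lamination.
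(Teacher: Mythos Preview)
For clauses (1), (2), and clause (3) on non-periodic leaves your argument is the paper's: approximate by leaves of the $\mathcal L_k$, invoke clause~(5) of the FDL definition to get sibling collections of the approximants, pass to a subsequence of fixed combinatorial type, and observe that the limit collection stays disjoint because the $2d$ preimages of the endpoints of $\sigma_d(\ell)$ are distinct. The paper simply declares leaves already lying in some $\mathcal L_k$ to be handled ``obviously'' and writes out only the limit-leaf argument; you are right that periodic leaves deserve separate attention, since the FDL axioms grant sibling collections only to non-periodic leaves.

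Your periodic argument is not quite finished, and the strategy ``show $\{\ell\}\cup S$ is $d+1$ pairwise disjoint leaves and invoke pigeonhole'' cannot be completed in the $2$-gon-image case: there $\ell$ genuinely shares endpoints with members of $S$. The clean way through is as follows. Every equivalence class in an FDL is entirely periodic or entirely preperiodic (a class containing a period-$n$ point is setwise fixed by $\sigma_d^n$, which then permutes its finitely many vertices). Hence any $\mu_i\in S$ meeting a vertex of the periodic polygon $Q\ni\ell$ must be a side of $Q$. When $\sigma_d(Q)$ has at least three vertices, the only side of $Q$ through a given preimage of $a^*$ that maps to $\ell^*=\overline{a^*b^*}$ is the adjacent one, and since $S$ uses every preimage of $a^*$ this forces $\ell\in S$ outright. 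When $\sigma_d(Q)$ is a $2$-gon, the sides of $Q$ form two disjoint perfect matchings of its vertex set and $S$ contains exactly one of them; if it is the wrong one, replace those sides by the other matching to obtain a disjoint collection $S'\subseteq\mathcal L_1$ with $\ell\in S'$ (for $Q=\ell$ itself there is only one matching, so $\ell\in S$ immediately). The appeals to \cref{addsiblings} and to the no-critical-leaf clause are not the operative mechanism here.
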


\begin{proof}
	The claims in the definition of sibling invariant obviously apply to any leaf appearing in an FDL and the sequence limiting to this lamination, and by the definition of lamination those claims also apply to all degenerate leaves. Thus consider a limit leaf, and consider our leaf to have length $x>0$. Note that it must be the limit of leaves that are in some of those FDL otherwise it would not appear in the limit. Since $\sigma_d$ is a continuous function, the images of the leaves in the sequence converge to the image of the limit.

	Let $0< \epsilon < \frac{x}{2d}$. Consider the arcs of length $2\epsilon$ around each endpoint of the limit leaf. The sets of preimage arcs of those two arcs are disjoint. Consider a leaf from one of the FDL in the sequence that is within $\epsilon$ of the limit leaf. That leaf has $d$ disjoint leaves mapping to it and all of them are from one arc to another. The number of options for how to connect the arcs is essentially a sibling portrait, thus there are finitely many ways to connect them. Even if further leaves in the sequence have sibling collections that join the arcs differently, by the sheer cardinality of the sequence of sibling collection, they must accumulate somewhere, and the leaves of their limit will also join points from the disjoint arcs, and thus the limit sibling collection will remain disjoint.

\end{proof}

\noindent
\begin{minipage}[t]{0.6\textwidth}
	\begin{ex}
		There exists a lamination in the base that contains a critical leaf. Consider the rabbit and chose an endpoint for the limit critical leaf that is not a preimage of the original periodic vertices and is also not in the boundary of the critical Fatou gap of the canonical rabbit. Moreover, you need to ensure that the chosen critical leaf does not pass though the original periodic triangle but does pass through infinitely many preperiodic leaves of the canonical rabbit, though these may not be exactly sufficient conditions. Using the long-established technique of pulling back relative to a critical chord, this arrangement forces there to be a leaf at the endpoint of the critical chord.

		\cref{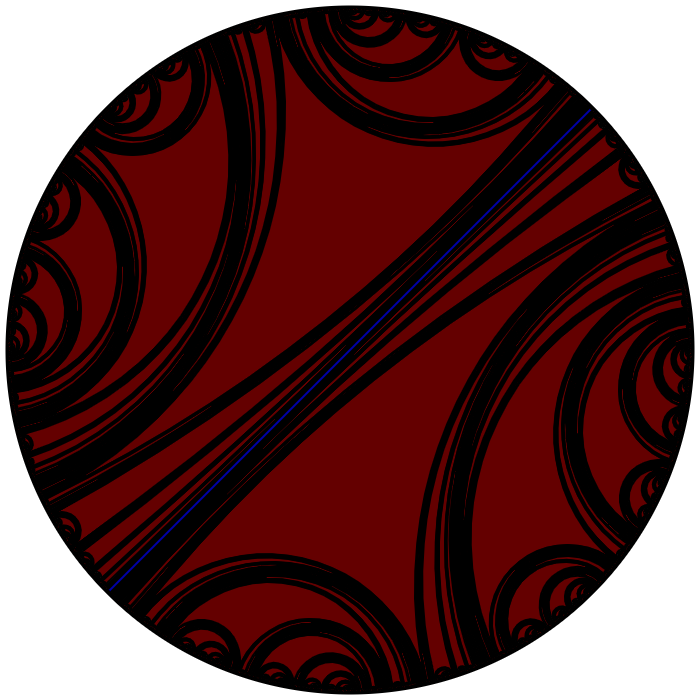} has the critical leaf with angle $\frac{1}{8}$. It is an example of a sub-hyperbolic lamination where the criticality is eventually periodic of a different period than the periodic leaves in the root of the pullback tree. It seems that in this way we could create a critical leaf at a wandering point.

		What we can not do with this technique is force a lamination to have an additional leaf at a point that is in the boundary of the original critical Fatou gap. Thus, it seems that we can not cause a satellite bifurcation.
	\end{ex}
\end{minipage}%
\hfill
\begin{minipage}[t]{0.38\textwidth}
	\vspace*{0cm}
	\includegraphics[width=\linewidth]{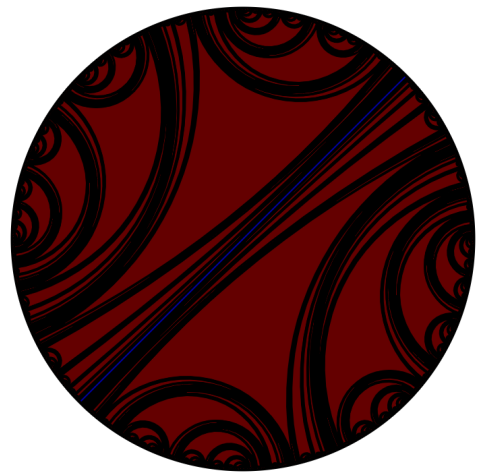}
	\captionof{figure}{The pullback lamination of the $\frac{1}{8}$ critical chord starting with the rabbit.}
	\label{critical_leaf-001.png}
\end{minipage}

\vspace*{1em}

\begin{ex}
	\cref{fignon-q,primitive} are the results of forcing the lamination to place a leaf at a certain point in the manner contemplated above. In \cref{fignon-q}, we force it to place a limit leaf at $0010\_001$. In the FDL of the sequence, there are already 2 other leaves there for a total of 3, making the lamination not clean. However, the lamination is not too bad and amounts to another way to approach the Misiurewicz point.

	\cref{primitive} is the result of forcing the lamination to have a leaf with the endpoint $\_0010$. If you have very good eyes you may be able to discern that the critical gap is a Fatou gap. This is an example of achieving a primitive bifurcation and forcing a periodic leaf to appear. It seems that at many points we can force there to be a leaf, but the lamination chooses what kind of leaf.
	\hfill
	\noindent

\end{ex}

\begin{minipage}[t]{0.48\textwidth}
	\vspace*{0cm}
	\includegraphics[width=\linewidth]{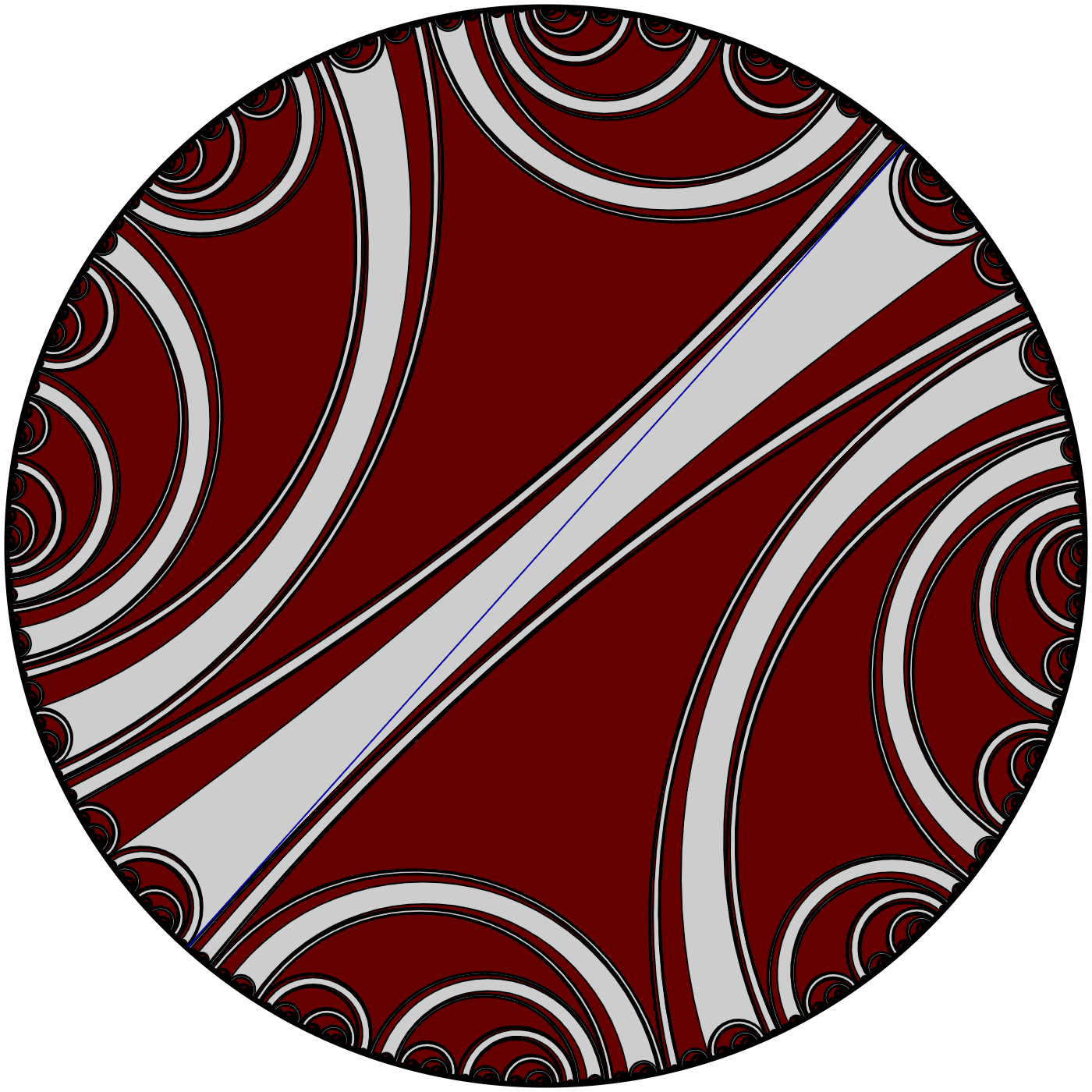}
	\captionof{figure}{An unclean pullback lamination.}
	\label{fignon-q}
\end{minipage}
\begin{minipage}[t]{0.48\textwidth}
	\vspace*{0cm}
	\includegraphics[width=\linewidth]{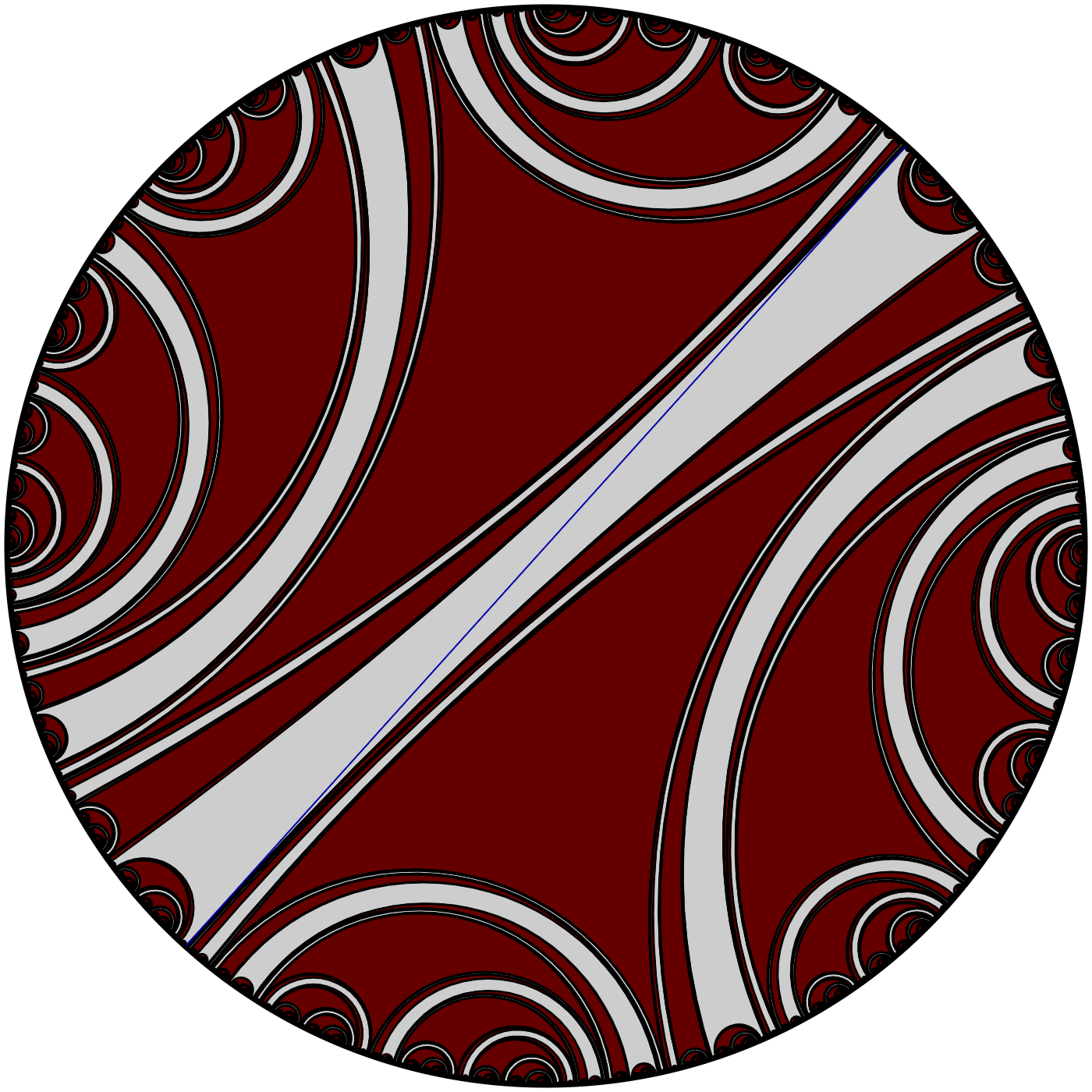}
	\captionof{figure}{A primitive bifurcation of the rabbit.}
	\label{primitive}
\end{minipage}

Consider whether all the equivalence classes are finite. It seems so from these examples, and the last one might give us a hint as to why. If we skip ahead to \cref{criticalsides} restrictions on the critical leaves show that there are no countable/caterpillar gaps according to a theorem in the 2022 version of \cite{bhattacharya2021unicriticallaminations}. So it may seem obvious that the equivalence classes are finite, but there might be more subtle ways to create infinite equivalence classes, and it is probably better to invoke the characterization meant for this question. The following definition and theorem are from \cite{blokh2012laminationslanguageleaves}.

\begin{definition}[Proper lamination]
	Two leaves with a common endpoint $v$ and the same image which is a
	leaf (and not a point) are said to form a \emph{critical wedge} (the
	point $v$ then is said to be its vertex). A lamination $\lam$ is
	\emph{proper} if it contains no critical leaf with periodic endpoint
	and no critical wedge with periodic vertex.
\end{definition}

\begin{thm}\label{t:nowander}
	Let $\lam$ be a proper
	invariant lamination. Then
	the equivalence relation of $\lam$ is an invariant laminational equivalence relation.
\end{thm}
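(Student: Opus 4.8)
The plan is to check the defining conditions of an invariant laminational equivalence relation one at a time, isolating finiteness of equivalence classes as the single nontrivial point. Write $\sim$ for the equivalence relation of $\lam$ and, for a class $\mathbf{g}$, let $\mathrm{Hull}(\mathbf{g})$ be its convex hull. The condition that hulls of distinct classes are disjoint is immediate: two of them can meet only along a chord, and a chord joining points of two different classes would have to cross, or coincide with, a leaf of $\lam$ — impossible by the non-crossing property and the minimality built into the definition of $\sim$. The dynamical conditions (forward image of a class is a class, full preimage of a class is a union of classes, and the sibling/gap-invariance condition) together with closedness of the graph of $\sim$ I would defer: once classes are finite, each is the vertex set of a gap spanned by finitely many leaves, and these conditions follow from sibling $d$-invariance of $\lam$ by pushing those finitely many leaves forward with $\sigma_d$, pulling them back with the sibling property, and taking hulls — with properness doing exactly the job of ruling out the degenerate coincidences (a leaf collapsing to a point, or two leaves forming a critical wedge) that could occur at a periodic vertex and destroy the required bijections.

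So the heart of the matter, and the step I expect to be hardest, is showing every $\sim$-class is finite — the ``no wandering'' content the name of the theorem advertises. I would argue by contradiction. Suppose $\mathbf{g}$ is infinite, so $G := \mathrm{Hull}(\mathbf{g})$ is a gap with infinitely many vertices, hence with infinitely many complementary holes, whose lengths therefore tend to $0$. First I would observe that the forward orbit stays infinite: $\sigma_d$ restricted to $\partial G$ is a covering onto its image of degree at most $d$ (no gap has infinite degree, since the total criticality of an invariant lamination is $d-1$ by the count in \cref{freecriticality}), so $\sigma_d(G)$ again has infinitely many vertices, and inductively so does every $\sigma_d^n(G)$.

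Then I would split into two cases. If the orbit of $G$ is finite, some iterate $G^{\ast}$ is periodic of some period $p$, and $\sigma_d^{p}$ restricted to $\partial G^{\ast}$ is a positively oriented covering of some degree $m$; the plan is to show infiniteness of $G^{\ast}$ is incompatible with properness. If $m=1$ the boundary leaves of $G^{\ast}$ are mutually isolated (a rotational, Siegel-type gap), so any point of $\partial G^{\ast}$ lies on at most one leaf and has a two-element class. If $m\ge 2$, the infinitely many chained leaves spanning $\mathbf{g}$ inside $\partial G^{\ast}$, followed under the return map, must eventually produce either a boundary leaf collapsing to a periodic point or a critical wedge with periodic vertex — both forbidden by properness; making this precise (pinning down where in the orbit the criticality forced by $m\ge 2$ interacts with the chain) is the technical core. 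If instead the orbit of $G$ is infinite, then $G$ is a wandering infinite gap, and here I would invoke the counting argument for wandering gaps in the spirit of Kiwi and of Blokh--Oversteegen: each branching or side-collapse along the orbit of a wandering gap consumes part of the $d-1$ units of criticality, so only finitely many such events occur, whereas producing infinitely many vertices would require arbitrarily much branching along the orbit — so a wandering gap is in fact a finite polygon. Either way $\mathbf{g}$ could not have been infinite.

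With finiteness established I would close the remaining loops. Closedness of $\sim$: if $a_n\sim b_n$ with $a_n\to a$, $b_n\to b$, then by compactness of the space of laminations (\cref{compact metric space}) the gaps $\mathrm{Hull}(\mathbf{g}_n)$ accumulate on a finite gap of $\lam$ whose vertex set contains both $a$ and $b$, so $a\sim b$. The dynamical conditions follow as sketched above from sibling $d$-invariance applied leaf by leaf to the finite spanning sets, properness again being precisely what keeps images and preimages of classes honest at periodic points. The one genuine obstacle in all of this is the finiteness step — concretely the wandering case and the $m\ge 2$ periodic case, where one must isolate the right monovariant and the criticality budget it obeys; the rest is bookkeeping around properness.
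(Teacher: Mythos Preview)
The paper does not prove this theorem. Immediately before the definition of proper lamination it states ``The following definition and theorem are from \cite{blokh2012laminationslanguageleaves},'' and \cref{t:nowander} is then quoted without argument and used as a black box in the proof of \cref{realized}. So there is no proof in the paper to compare your proposal against; the paper's ``proof'' is the citation.

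On its own merits your outline is in the right spirit --- finiteness of classes is indeed the only substantial point, and the periodic/wandering dichotomy with a criticality budget is how the cited source proceeds --- but the $m=1$ subcase is not right as written. You assert that if the return degree is one then ``the boundary leaves of $G^{\ast}$ are mutually isolated (a rotational, Siegel-type gap), so any point of $\partial G^{\ast}$ lies on at most one leaf and has a two-element class.'' This imports the structure theorem for Siegel gaps of an invariant lamination, but $G^{\ast}$ is by hypothesis the convex hull of a single infinite $\sim$-class, not a gap of $\lam$; its vertices are, by the definition of $\sim$, linked by chains of leaves of $\lam$, so you cannot conclude the leaves are isolated --- that would contradict your own setup. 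What actually has to be shown is that such a chain of leaves, cycled by a degree-one return, forces a limit leaf at an accumulation vertex and hence (after iteration) a critical leaf or critical wedge at a periodic point, which is where properness bites. Your $m\ge 2$ and wandering cases you already concede are deferrals to the literature, which is exactly what the paper does for the theorem as a whole.
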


Note that their definition of \emph{invariant laminational equivalence relation}, which we sometimes merely call \emph{laminational}, is similar to their notion of a q-lamination, and requires that classes are finite and map with positive orientation. We have an example where a limit leaf accumulates with polygons at both endpoints. There is no possible issue with this at preperiodic points. So it is important that all limit leaves have non-periodic endpoints. %

\begin{theorem}\label{proper}
	All leaves with a periodic endpoint in a lamination from the base of the pullback tree have both endpoints periodic of the same period. Thus, all laminations in the base of the tree are proper and sibling invariant.
\end{theorem}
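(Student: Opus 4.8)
The plan is to combine \cref{siblinginvariant}, which already yields sibling invariance, with a local expansion analysis at periodic points showing that the periodic part $P$ is the only source of leaves touching a periodic point.

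Fix a lamination $\mathcal L^{\ast}$ in the base, realized by a sequence $P=\mathcal L_0,\ \mathcal L_1,\ \mathcal L_2,\dots$ with $\sigma_d(\mathcal L_{k+1})=\mathcal L_k$; by \cref{limit contains}, $\mathcal L^{\ast}=\overline{\bigcup_k\mathcal L_k}$, and by \cref{siblinginvariant} it is sibling invariant. The first step is a reduction internal to the FDL: \emph{a leaf of any FDL that has a periodic endpoint is a periodic leaf, with both endpoints periodic of that same period.} If $\overline{ab}$ is a leaf of an FDL $\mathcal L$ and $a$ has period $p$, then $b$ lies in the $\mathcal L$-class of $a$, which by conditions (6)--(7) of the FDL definition is the vertex set of a periodic polygon that maps as a positively oriented covering; but a positively oriented covering of a finite cyclic set onto itself (or, going once around the orbit, a composition of such) must have degree $1$, so $\sigma_d^{\,p}$ restricts to a rotation of the class fixing $a$, hence the identity. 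Thus $b$ is periodic of period $p$ and $\overline{ab}$ is periodic, and since every FDL on the pullback tree has exactly $P$ as its periodic leaves, $\overline{ab}\in P$. Consequently it suffices to prove: (a) a periodic leaf of $\mathcal L^{\ast}$ already lies in $P$; and (b) $\mathcal L^{\ast}$ has no non-periodic leaf with a periodic endpoint — since any forward iterate of such a leaf that remains a non-degenerate leaf would, after enough steps, be a periodic leaf with a periodic endpoint, hence in $P$ by (a), making the original leaf a preimage of a $P$-leaf that the pullback process would have introduced into some $\mathcal L_k$, in contradiction with the reduction.

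The crux is the local expansion estimate. Suppose $\ell=\overline{ab}$ is a leaf of $\mathcal L^{\ast}$ with $a$ periodic of period $p$ and $\ell\notin\bigcup_k\mathcal L_k$; set $g=\sigma_d^{\,p}$, so $g(a)=a$ and $g$ is, near $a$, an orientation-preserving expansion by $d^p$. Choose $\ell_n=\overline{a_nb_n}\in\mathcal L_{k_n}$ with $\ell_n\to\ell$, $\ell_n\ne\ell$; after passing to a subsequence each $\ell_n$ is non-periodic (otherwise, $P$ being finite, a constant subsequence would put $\ell$ in $P$), hence by the reduction has no periodic endpoint, so $a_n\ne a$, $a_n$ is non-periodic, and $a_n\to a$. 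Each forward image $\sigma_d^{\,j}(\ell)$ is a limit of $\sigma_d^{\,j}(\ell_n)\in\mathcal L_{k_n-j}$, so it lies in $\mathcal L^{\ast}$ as long as it is non-degenerate, and since the endpoints of $\ell$ are rational this forward orbit is eventually periodic; replacing $p$ by a multiple, I may assume $g(\ell)=\ell$, so both endpoints of $\ell$ are periodic of period $p$ (the precritical case, where a forward image of $\ell$ collapses to a point, is dealt with by the same estimate applied to the short image-leaves accumulating at the corresponding periodic point). Now choose, on the side of $a$ towards which a subsequence of the $a_n$ approaches, a short arc $I$ with endpoint $a$ on which $g$ is an orientation-preserving homeomorphism moving every point strictly away from $a$. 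Since $a_n\to a$, for large $n$ the forward $g$-orbit of $a_n$ starts in $I$ and first leaves $I$ after $j_n$ steps with $j_n\to\infty$; tracking the leaves $g^{j_n}(\ell_n)\in\bigcup_k\mathcal L_k$ and using compactness (\cref{compact metric space}) produces, in $\mathcal L^{\ast}$, a short leaf at $a$ on the $I$-side. But the forward $g$-orbit of $\ell$ is a \emph{finite} fan of leaves at $a$ (its endpoints being eventually periodic), and the leaf of this fan closest to $a$ on the $I$-side is carried by $g$ strictly further from $a$ while staying in the fan, which is absurd unless it is itself $g$-fixed. Running this together with the pullback structure — each FDL on the tree contains the full sibling collection of every preimage it requires, by conditions (4)--(5) — forces the $g$-fixed sub-fan at $a$ to have been introduced somewhere along the tree, hence to lie in $P$; this contradicts $\ell\notin\bigcup_k\mathcal L_k$ and proves (a), and (b) follows as explained above.

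Therefore every leaf of $\mathcal L^{\ast}$ with a periodic endpoint lies in $P$ and has both endpoints periodic of the same period. A critical leaf maps to a point and so is never periodic, and a critical wedge with periodic vertex would require two leaves of $P$ out of one periodic vertex with the same non-degenerate image — impossible, since all leaves of $P$ at a periodic vertex are edges of a single degree-one periodic polygon. Hence $\mathcal L^{\ast}$ is proper, and sibling invariant by \cref{siblinginvariant}. The main obstacle is the third paragraph: ruling out accumulation of preperiodic FDL-leaves at a periodic point. The delicate points are making the ``closest leaf of the fan'' and ``pull the sub-fan back'' steps precise when the orbit of $a$ avoids the vertices of $P$ (so that no leaf of $P$ is a priori available to block the approaching points), and handling the precritical/degenerate-image case; I expect these to need a careful local study of $\sigma_d^{\,p}$ near the whole orbit of $a$ together with the gap structure of $P$ around it.
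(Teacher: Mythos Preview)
Your argument has a genuine circular gap in the third paragraph: you write ``since the endpoints of $\ell$ are rational this forward orbit is eventually periodic,'' but the rationality --- indeed, the periodicity --- of the second endpoint $b$ is precisely what the theorem asserts. Nothing prior establishes it; limits of rational angles need not be rational, so a limit leaf of $\mathcal L^{\ast}$ has no a priori reason to have a rational non-periodic endpoint. Once this step fails, the reduction to $g(\ell)=\ell$ collapses, and with it the ``finite fan'' argument that follows. (The subsequent step is also unclear as written: tracking $g^{j_n}(\ell_n)$ produces leaves whose $a$-side endpoint sits near the far boundary of $I$, not at $a$, so compactness does not obviously yield a leaf \emph{at} $a$.) Your closing paragraph flags the fan argument as delicate, but the rationality assumption is the more serious problem and it occurs earlier.

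The paper avoids this by iterating \emph{backward} through the pullback tree rather than forward under $g$. Given a limit leaf $\overline{xv}$ with $v$ fixed, choose an FDL leaf $\overline{x_0v_0}$ close enough that $d\,(v_0-v)<x_0-v$ and the open arc $(x_0,x]$ contains no fixed point. Because the \emph{next} FDL in the sequence contains a full preimage of $\overline{x_0v_0}$, there is a leaf $\overline{x_1v_1}$ with $v_1\in(v,v_0)$; non-crossing with $\overline{x_0v_0}$ and $\overline{xv}$ forces $x_1\in(x_0,x)$. Iterating gives monotone sequences $v_i\to v$ and $x_i\to y$ with $\sigma_d(x_i)=x_{i-1}$, hence $\sigma_d(y)=y$ by continuity, and the choice of $x_0$ forces $y=x$. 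Thus $x$ is fixed, with no advance assumption on $x$. The idea you are missing is to exploit the defining feature of a sequence to the end of the tree --- that each $\mathcal L_{k+1}$ supplies preimages of leaves in $\mathcal L_k$ --- to manufacture a monotone trapping sequence on the $x$-side, rather than pushing forward and hoping the orbit of $b$ closes up.
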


\begin{proof}
	The fact that the lamination is invariant is \cref{siblinginvariant}. Since an FDL is a set of periodic and preperiodic leaves, we consider a leaf not appearing in any FDL, though we can not assume that no leaf from an FDL in the sequence shares an endpoint with it. We call the limit leaf $\overline{xv}$, where $v$ is the periodic point and $x\neq v$. Without loss of generality, we assume $v$ to be fixed, and we assume that the leaves approaching $\overline{xv}$ are on the CCW side of $v$ and the $CW$ side of $x$.

	Consider a leaf, $\overline{x_0v_0}$ from an FDL that is so close to $\overline{xv}$ that $d*(v_0 - v)<x_0-v$ and open arc from $(x_0,x]$ contains no fixed points. Thus, the image of the arc $(v,v_0)$ expands to include $v_0$, but not enough to include $x_0$. Thus, the next FDL in the sequence has preimage of $\overline{x_0v_0}$, call it $\overline{x_1v_1}$, such that $v_1$ is in the arc $(v,v_0)$. Since $x_1$ is not in the arc $(v,v_0)$, and $\overline{x_1v_1}$ does not cross $\overline{x_0v_0}$ or $\overline{xv}$, $x_1$ is in the arc $(x_0,x)$. Since $\overline{x_1v_1}$ is even closer, we can repeat this process indefinitely to form the sequence $\{\overline{x_iv_i}\}$. By construction $v_i \to v$, and by the monotone convergence theorem, $x_i \to y$. We also see that, $x_{i-1} = \sigma_d(x_{i})$ and $v_{i-1} = \sigma_d(v_{i})$. By the sequential criterion of $\sigma_d$ being a continuous function, $\sigma_d(y) =y$. But since we chose $x_0$ such that there is no fixed point in between $x$ and $x_0$, $y=x$ which is the only fixed point it is allowed to be.

	This argument can just as easily be applied to the CW side of $v$. If $v$ is of period $n$, the conclusion that $\sigma^n_d(y) =y$ would merely be an upper bound on the period of $y$. We can still establish $y=x$ since there are finitely many periodic points given a bound on the period. However, to see that the points are of the same period, one should reverse the argument and find that the period of $x$ is an upper bound of the period of $v$. With that said, the last statement of the theorem is the observation that a periodic leaf is not a critical leaf and a pair of periodic leaves is not a critical wedge.
\end{proof}

\section{Are all FDL realized by a polynomial?}\label{Realized}

Though the next logical step may be to contemplate loops in the closure of the pullback tree, a patient reader may be interested in whether all FDL have polynomials, and we are ready to answer that. After we take a moment to observe the absence of Siegel gaps, we can use Kiwi's theorem \cite{KIWI2004207}. Kiwi defined the term ``$\mathbb{R}$eal lamination'' to refer to laminational equivalence relations. \cite{blokh2012laminationslanguageleaves} states that forward invariance implies backward invariance. Otherwise, the stipulations in the definitions are in one to one correspondence with identical meaning. Thus, we use the term laminational as in \cite{blokh2012laminationslanguageleaves}.

\begin{theorem} %
	An equivalence relation, $\lambda$, of the circle is (in the sense of impressions) the lamination of a polynomial $f$ with
	connected Julia set and without irrationally neutral cycles if and only if $\lambda$ is a laminational equivalence relation without rotation curves.
\end{theorem}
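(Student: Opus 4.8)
This is Kiwi's theorem \cite{KIWI2004207}, which in the rest of this paper we invoke as a black box; the sketch below records how one would reconstruct it. The plan is to prove the two implications separately: the ``only if'' direction, which extracts a laminational equivalence relation from a polynomial, and the ``if'' direction, which realizes an abstract laminational equivalence relation by a polynomial. The first is essentially classical landing theory; the second is where the work lies.

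For ``only if'', fix a polynomial $f$ of degree $d$ with connected Julia set $J$ and no irrationally neutral cycle. First I would establish that every external ray of rational angle lands in $J$: periodic rays land at repelling or parabolic periodic points by the Douady--Hubbard landing theorem (the Cremer and Siegel alternatives are excluded by hypothesis, while parabolic points are permitted), and preperiodic rays then land by pulling back along $f$. I would define $\lambda$ by declaring two angles equivalent exactly when the impressions of the corresponding prime ends meet --- equivalently, since rational rays land, when the rational rays sharing their forward orbits co-land --- and then close up. The next step is to check each axiom of a laminational equivalence relation: $\lambda$ is closed, the convex hulls of classes are pairwise unlinked, classes are finite, $\lambda$ is $\sigma_d$-invariant both forward and backward, and periodic classes map with positive orientation; each of these is read off from the local dynamics at repelling or parabolic landing points together with the fact that $f|_J$ is compatible with $\sigma_d$ as a degree-$d$ branched cover. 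Finally I would show $\lambda$ has no rotation curve: a rotation curve in $\mathbb{S}/\lambda$ is the combinatorial shadow of a periodic Fatou rotation domain or a Cremer cycle, i.e.\ of an irrationally neutral cycle of $f$, which the hypothesis forbids.

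For ``if'', start from a laminational equivalence relation $\lambda$ of $\mathbb{S}$ with no rotation curve; the goal is a polynomial whose impression lamination is exactly $\lambda$. I would (1) pass to the topological Julia set $J_\lambda = \mathbb{S}/\lambda$ and the induced degree-$d$ branched self-covering $f_\lambda$ (the ``topological polynomial''); (2) extract from $\lambda$ a finite combinatorial skeleton --- a Hubbard-tree-type datum together with an invariant critical portrait --- and observe that ``no rotation curve'' is exactly what keeps this skeleton from carrying Siegel/Cremer-type obstructions; (3) realize the skeleton by an actual polynomial, first in the postcritically finite case via Thurston's topological characterization of rational maps (equivalently Poirier's realization theorem for Hubbard trees), and then in general by approximating $\lambda$ by postcritically finite data, realizing each approximant, and passing to a limit in the space of degree-$d$ polynomials; (4) verify that the limiting polynomial has connected Julia set, no irrationally neutral cycle, and impression lamination exactly $\lambda$ and not a strictly coarser relation --- the last point again using that distinct rational rays land at distinct points unless $\lambda$ forces them together.

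The main obstacle is step (3) of the ``if'' direction: realizing an infinite laminational equivalence relation by a genuine polynomial. Thurston's theorem applies only in the postcritically finite regime, so the general case has to be obtained by a compactness argument, and the delicate part is controlling the limit --- ensuring it does not escape the class of polynomials with connected Julia set and no irrationally neutral cycle, and that no collapsing of the combinatorics happens in the limit. This is precisely where the hypothesis that $\lambda$ has no rotation curve does its real work.
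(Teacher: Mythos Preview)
The paper does not prove this statement at all: it is quoted verbatim as Kiwi's theorem and used as a black box, exactly as you say in your first sentence. So there is no ``paper's own proof'' to compare against; your sketch is extra material rather than a competing argument.

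As commentary on the sketch itself: your outline of the two directions is broadly faithful to Kiwi's paper, but step~(3) of the ``if'' direction is not how Kiwi proceeds. He does not approximate by postcritically finite data and invoke Thurston's characterization in the limit; instead he works directly with the combinatorial continuum $\mathbb{S}/\lambda$, builds a suitable critical portrait, and uses perturbation and puzzle techniques (together with Yoccoz-type inequalities) to locate a polynomial with the prescribed rational landing pattern. The ``no rotation curves'' hypothesis is used to guarantee that every periodic class is repelling-type in the topological model, which is what makes the puzzle machinery go through. Your compactness-of-PCF-approximants plan is plausible in spirit but would need a separate argument to rule out degeneration to a polynomial with an irrationally neutral cycle in the limit, and that is nontrivial --- it is essentially the content of the theorem restated. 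If you want to keep a sketch here, it would be more accurate to say that the realization step goes via critical portraits and combinatorial rigidity rather than via Thurston's theorem plus a limit.
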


\begin{lemma} \label{criticalsides}
	If a gap of a lamination in the base of the pullback tree has a critical leaf in its boundary, then each leaf in the boundary of the equivalence class is critical.
\end{lemma}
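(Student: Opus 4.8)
The plan is to reduce the claim to a combinatorial fact about laminational equivalence relations that can be read off \cref{degree}. By \cref{siblinginvariant,proper}, any lamination $\mathcal L$ in the base of the pullback tree is a proper invariant sibling lamination, so \cref{t:nowander} applies and its equivalence relation is laminational; in particular every equivalence class is finite and every gap has a well defined degree. A critical leaf $\overline{ab}$ of $\mathcal L$ lies in the boundary of the polygon $G := \mathrm{hull}(E)$, the convex hull of the class $E := [a] = [b]$ of its identified endpoints, and $a, b$ are consecutive vertices of $G$; the assertion that ``each leaf in the boundary of the equivalence class is critical'' means that every boundary leaf of $G$ is critical. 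When $|E| = 2$ this is vacuous ($G = \overline{ab}$), so I assume $|E| \ge 3$. It then suffices to prove that $\sigma_d(E)$ is a single point, since then every boundary leaf $\overline{xy}$ of $G$ satisfies $\sigma_d(x) = \sigma_d(y)$.

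Here is the argument I would give. Because $\overline{ab}$ is critical and lies on $\partial G$, the map $\sigma_d|_{\partial G}$ sends the whole edge $\overline{ab}$ of $\partial G$ to the single point $\sigma_d(a)$; in other words it collapses a non-degenerate sub-arc of the topological circle $\partial G$. A positively oriented covering map of circles is a local homeomorphism and collapses nothing, so $\sigma_d|_{\partial G}$ is not such a covering onto a polygon. By \cref{degree}, a gap that has a degree falls into exactly one of three types according to its image: the image is a point; the image is a leaf, in which case $G$ has $2k$ vertices alternating between the two preimage classes of the endpoints of that leaf, so consecutive vertices of $\partial G$ have distinct images; or the image is a non-degenerate polygon and $\sigma_d|_{\partial G}$ is a positively oriented covering of some degree $k$, so again consecutive vertices of $\partial G$ (mapping in repeating cyclic order onto the vertices of the image polygon) have distinct images. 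The last two types are incompatible with the critical edge $\overline{ab}$, since they give $\sigma_d(a)\ne\sigma_d(b)$. Since $\mathcal L$ is laminational, $G$ does have a degree; hence the first type holds, $\sigma_d(G)$ is a single point, and we are done.

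The step I expect to be the main obstacle is the assertion that in a laminational equivalence relation every gap genuinely has a degree realized by one of those three pictures — and in particular that the ``image is a leaf'' case really does force the alternating pattern of vertices (equivalently, that a gap carrying a critical leaf in its boundary cannot map onto a non-degenerate polygon). I would take this from \cref{t:nowander} and \cref{degree} themselves. A more self-contained alternative, which I would keep in reserve in case a referee wants it, is to produce a crossing directly: using sibling invariance, take the $d$ disjoint sibling leaves of a boundary leaf of $\mathrm{hull}(\sigma_d(E))$ incident to $\sigma_d(a)$; by \cref{covering} these use all $d$ preimages of $\sigma_d(a)$ exactly once each, so $a$ and $b$ are endpoints of two distinct siblings, and tracing those two leaves forces one of them to separate the collapsing pair $\{a,b\}$ from the rest of $E$, contradicting non-crossing. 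The covering-map formulation is shorter and is the one I would write up.
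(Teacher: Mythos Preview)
Your route is genuinely different from the paper's. The paper never invokes \cref{t:nowander} or the degree trichotomy of \cref{degree}; instead it argues directly from the limit structure: since FDL contain no critical leaves, the critical leaf $\overline{ab}$ is a limit of non-critical FDL leaves $\overline{a_ib_i}$, and by sibling invariance each $\overline{a_ib_i}$ has a sibling $\overline{c_id_i}$ with $c_i$ the sibling of $a_i$ just counterclockwise of $b$. Passing to a limit produces a second critical leaf of $\mathcal L$ attached at $b$; iterating closes up an all-critical polygon through $a,b$. Your argument is shorter and more conceptual, but it leans on \cref{proper} and the external \cref{t:nowander}, while the paper's is self-contained at this point.

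There is, however, a real gap in your reduction. You assert that $a$ and $b$ are consecutive vertices of $G=\mathrm{hull}(E)$, and your covering-map dichotomy uses exactly that. But $\mathcal L$ is only known to be proper, not a q-lamination: the paper exhibits unclean laminations in the base of the pullback tree (see \cref{fignon-q}), so a leaf of $\mathcal L$ can be a \emph{diagonal} of $\mathrm{hull}(E)$. If $\overline{ab}$ were such a diagonal, then $E$ can perfectly well map with degree $k\ge 2$ onto a non-degenerate $\sigma_d(E)$ while $\sigma_d(a)=\sigma_d(b)$ (take $a,b$ a multiple of $|\sigma_d(E)|$ apart in the cyclic order), and your trichotomy no longer forces $\sigma_d(E)$ to be a point. \cref{t:nowander} tells you about the equivalence relation, not about which chords are actually leaves of $\mathcal L$, so it cannot by itself rule this out. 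The paper's limit-and-sibling construction avoids the issue because it manufactures adjacent critical leaves of $\mathcal L$ step by step rather than assuming adjacency of $a$ and $b$ in $E$. If you want to salvage your approach, you would need an independent argument that no critical \emph{diagonal} of a class can occur as a leaf of a lamination in the base of the pullback tree; your reserve sibling-crossing argument is closer to what is needed, but as written it also presupposes that $\overline{ab}$ bounds $\mathrm{hull}(\sigma_d(E))$ non-trivially.
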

\begin{proof}
	Suppose we have the critical leaf $\overline{ab}$. This leaf is the limit of a sequence of leaves from at least one side. We take a sequence of leaves converging from only one side and call it $\overline{a_ib_i}$. Since the side does not really matter, we suppose $a<a_i<b_i<b$. We choose a leaf so close that $a_i-a<\epsilon$ and $b-b_i<\epsilon$. We call the sibling of $a_i$ CCW of $b$ $c_i$, and the CCW endpoint of the sibling of $\overline{a_ib_i}$ that has $c_i$ as its CW endpoint $d_i$. Since $c_i$ is within $\epsilon$ of $b$ and $d_i$ is within $\epsilon$ of some sibling of $a$ and $b$. Form a sequence of such siblings. Since $a$ and $b$ have finitely many siblings, we find that the sequence must accumulate on some critical leaf with a CW endpoint at $b$. Perhaps we have found a sequence that approaches $\overline{ab}$ from the other side, or perhaps we have found that the initial critical leaf must have another to its CCW that touches it. In the latter case, we can find a sequence of them until there is no more room in the circle, and we return to our starting point, $a$.
\end{proof}

\begin{theorem}\label{realized}
	All laminations in the base of the pullback tree satisfy the hypothesis of Kiwi's realization theorem. Moreover, for every FDL, there is a polynomial with a connected Julia set such that for each non-singular equivalence class of the FDL, there is a point in the Julia set where exactly the angles of that equivalence class have rays landing at it.
\end{theorem}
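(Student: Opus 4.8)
The plan is to reduce the statement to Kiwi's theorem in two moves: first handle the base of the pullback tree, then lift the conclusion about the base to the conclusion about every FDL. For the first move, by \cref{t:nowander} combined with \cref{proper} (which already gives that every lamination in the base is proper and sibling invariant), the equivalence relation of any lamination $\mathcal{L}$ in the base is an invariant laminational equivalence relation. So the only hypothesis of Kiwi's realization theorem left to verify is the absence of rotation curves, i.e.\ of invariant Siegel/rotational gaps. Here is where \cref{criticalsides} enters: I would argue that a rotation curve would force a periodic Fatou gap of degree one whose boundary leaves are permuted nontrivially, and then trace back through the pullback construction. Since the root of the pullback tree is an FDL consisting only of periodic leaves mapping as positively oriented coverings (condition 7 of the FDL definition) and the base is obtained as a limit of successive pullbacks, any periodic gap is either a polygon (finite, hence not a rotation curve) or a critical Fatou gap. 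By \cref{criticalsides}, a critical leaf in the boundary of a gap forces all boundary leaves of that equivalence class to be critical, which is incompatible with a degree-one return. I would spell out that this rules out Siegel gaps and Cremer-type rotation curves, so the hypotheses of Kiwi's theorem hold and $\mathcal{L}$ is realized by a polynomial $f$ with connected Julia set, where the rays at angles in a common class of $\mathcal{L}$ co-land.

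For the second move — passing from the base to an arbitrary FDL $\mathcal{F}$ — I would use \cref{existsinvariant} and the structure of the pullback tree. Every FDL $\mathcal{F}$ lies at some finite level of a pullback tree (namely the one rooted at $\mathcal{F}$'s periodic leaves), and by \cref{limit contains} every sequence to the end of that tree through $\mathcal{F}$ converges to a lamination $\mathcal{L}$ in the base with $\mathcal{F}\subseteq\mathcal{L}$ and, crucially, $\mathcal{L}$ proper so its equivalence relation is laminational with finite classes. Now take the polynomial $f$ realizing $\mathcal{L}$ from the first move. Because $\mathcal{F}\subseteq\mathcal{L}$, every equivalence class of $\mathcal{F}$ is contained in an equivalence class of $\mathcal{L}$; but since the classes of $\mathcal{L}$ are finite and the leaves of $\mathcal{F}$ are exactly the boundary leaves of convex hulls of $\mathcal{F}$-classes (condition 6), I need to upgrade "contained in" to "equal to" on the non-singular classes — otherwise the landing pattern of $f$ might identify strictly more angles than $\mathcal{F}$ asks for. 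The cleanest route: choose $\mathcal{L}$ in the base so that its non-periodic leaves are a minimal pullback extension of $\mathcal{F}$; since the sequence to the end adds only preperiodic leaves whose images are already present, and since a lamination refining $\mathcal{F}$ can be chosen to refine it no more than forced, the non-singular classes of $\mathcal{F}$ that are periodic (these are forced to be whole classes of $\mathcal{L}$ by \cref{proper}, as $\mathcal{L}$ adds no periodic leaves beyond $P$) and the finitely many preperiodic classes of $\mathcal{F}$ survive intact. For each such class, the co-landing of exactly those angles in $f$ follows from Kiwi's theorem applied to $\mathcal{L}$ together with the equality of that class in $\mathcal{F}$ and in $\mathcal{L}$.

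The main obstacle I anticipate is precisely this last matching issue: Kiwi's theorem realizes the equivalence relation of $\mathcal{L}$, and I must guarantee that restricting attention to the classes named by $\mathcal{F}$ loses nothing, i.e.\ that no class of $\mathcal{F}$ gets strictly enlarged in $\mathcal{L}$. Periodic classes are controlled by \cref{proper} (the base adds no new periodic leaves, so periodic $\mathcal{F}$-classes are $\mathcal{L}$-classes verbatim). Preperiodic classes of $\mathcal{F}$ are the delicate case, since a pullback could in principle attach extra leaves to an existing preperiodic class. I would address this by observing that condition 4 of the FDL definition pins down exactly which preimages are present at each generation, and that one can run the pullback to the base keeping every preperiodic equivalence class equal to a single $\mathcal{F}$-class rather than a union of several — essentially because the pullback operation refines by adding siblings in round gaps (\cref{addsiblings}), never by fusing existing polygons. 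Making that "never fuses" statement precise, and confirming it is consistent with reaching an element of the base, is the crux; once it is in hand, the theorem follows by assembling the two reductions above.
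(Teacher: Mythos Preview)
Your first move tracks the paper --- \cref{proper} and \cref{t:nowander} yield an invariant laminational equivalence relation, and what remains is the no-rotation-curves condition. But your NR argument is circular: you assert that ``any periodic gap is either a polygon or a critical Fatou gap,'' which already presumes there are no Siegel (degree-one, uncountable, periodic) gaps, the very thing to be shown. The paper does not deduce this from the pullback construction; it instead cites the classification in \cite{blokh2015combinatorialmandelbrotsetquotient} that a Siegel gap must carry a critical leaf on the boundary of some iterate, and only with that external input does \cref{criticalsides} yield a contradiction. Your phrase ``trace back through the pullback construction'' does not supply that missing step, and your use of \cref{criticalsides} (``incompatible with a degree-one return'') presupposes you already have a critical leaf on the boundary, which you have not produced.

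On the second move you are more cautious than the paper, but in a direction the paper does not take. The paper does not try to select the end lamination $\mathcal L$ so that each $\mathcal F$-class survives unenlarged, nor does it argue that pullbacks ``never fuse'' polygons. It simply observes that the vertices of $\mathcal F$ are rational, that rational rays land (Douady--Hubbard), and that by Kiwi's Corollary~1.2 the impression of a landing ray is its landing point --- so the co-landing pattern of the realizing polynomial on rational angles coincides with the equivalence relation of $\mathcal L$. Your concern that an $\mathcal F$-class might sit strictly inside an $\mathcal L$-class is a legitimate reading of the word ``exactly'' and the paper is terse on this point, but your proposed remedy (engineering a non-fusing sequence to the end) is left as an unresolved ``crux'' in your own outline. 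If you want to pursue it, the relevant observation is that every leaf added at a pullback step beyond level $n$ has endpoints of strictly higher preperiod than any $\mathcal F$-vertex, so only limit leaves could cause fusing, and \cref{proper} already constrains those at periodic vertices.
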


\begin{proof}
	By \cref{proper} and \cref{t:nowander}, we have an invariant laminational equivalence relation. The only remaining condition is NR, which stands for ``no rotation curves''. A rotation curve is a periodic, simple closed curve, in the quotient of the equivalence relation with the circle such that the first return is a homeomorphism. Since a simple closed curve can only pass through any cut point of the quotient once, the curve corresponds to a gap of the lamination. Since a curve is uncountable, it must correspond to an uncountable gap of the lamination. Given that the gap is periodic with degree 1, it is classified by \cite{blokh2015combinatorialmandelbrotsetquotient} as a Siegel gap, and it is proven to contain a critical leaf in the boundary of its iterate. By \cref{criticalsides} no such gap exists. Thus, the lamination is realized by a polynomial with a connected Julia set.

	Kiwi's theorem does not show that all rays land in the way described by the lamination but instead that the impression of each ray intersects as described by the lamination. Along with the fact that all FDL have at least one child, this is enough to establish that the co-landing locus of any FDL is non-empty. The vertices of polygons in an FDL are all periodic or preperiodic, which is the same as rational. Using the connectedness of the Julia set, we can see from \cite{Orsay} that the rational rays land. Also from \cite{Orsay}, we know all rational rays land at periodic or preperiodic point in the Julia set. Kiwi makes it clear with his characterization of what it means for a ray to land that the landing point of a ray is in its impression. Thus, his corollary 1.2 applies, and the impression of the ray is its landing point.
\end{proof}

There is one more statement required for an adequate discussion of the polynomials represented.

\begin{lemma}\label{childlemma}
	Given any polynomial, $p$, and an FDL such that the former is in the
	co-landing locus of the latter, some child of the FDL contains $p$ in its
	co-landing locus.
	Alternately, given any laminational equivalence relation, $\lam$, and a FDL $\lam_1$ such that $\lam_1$ is finer than $\lam$, there exists an FDL, $\lam_2$ that is a child of $\lam_1$ and $\lam_2$ is also finer than $\lam$.
\end{lemma}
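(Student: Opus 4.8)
The plan is to prove the ``alternately'' formulation, with a laminational $\lam$; the polynomial version comes out of the identical construction, reading ``$\lam$-equivalence class'' as ``maximal set of angles whose dynamic rays co-land under $p$'' (all the relevant angles being rational, their rays land, by \cite{Orsay}) and replacing appeals to non-crossing of a lamination by the standard fact that rays landing at distinct points subtend disjoint chords. So fix a laminational $\lam$ and an FDL $\lam_1$, with parameter $n$ as in item~4 of the definition, such that $\lam_1$ refines $\lam$. Let $\mathcal F$ be the finite collection of $\lam_1$-classes to be pulled back in forming a child: the periodic classes if $n=0$, and otherwise the $\lam_1$-classes of maximal generation (those whose leaves have no preimage in $\lam_1$). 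By \cref{existsinvariant} a child of $\lam_1$ exists; the content of the lemma is that one may be chosen compatibly with $\lam$.

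First I would build the candidate. Fix $V\in\mathcal F$ and let $C\supseteq V$ be the $\lam$-class of $V$. Since $\lam$ is invariant it is backward invariant, so $\sigma_d^{-1}(C)$ is the disjoint union of the $\lam$-classes $C'$ with $\sigma_d(C')=C$, and $\sigma_d$ carries $\mathrm{conv}(C')$ onto $\mathrm{conv}(C)$ as a positively oriented covering of some degree $d_{C'}$ with $\sum_{C'}d_{C'}=d$. For each such $C'$ put $Q_{V,C'}:=\mathrm{conv}\bigl(\sigma_d^{-1}(V)\cap C'\bigr)$. A routine check — restricting a positively oriented covering of polygons to the preimages of a sub-vertex-set — shows each $Q_{V,C'}$ maps onto $\mathrm{conv}(V)$ as a positively oriented covering of degree $d_{C'}$, that $\sigma_d^{-1}(V)=\bigsqcup_{C'}\bigl(\sigma_d^{-1}(V)\cap C'\bigr)$, and hence that $\{Q_{V,C'}\}_{C'}$ is a sibling portrait of $\mathrm{conv}(V)$. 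Let $\lam_2$ be $\lam_1$ augmented by the boundary leaves of every $Q_{V,C'}$, over $V\in\mathcal F$ and all $C'$. A useful observation is that a periodic polygon of an invariant lamination maps with degree $1$ at every step of its orbit (the product of the step-degrees around a cycle is forced to be $1$); so when $V$ is periodic one of the $Q_{V,C'}$ is the already-present periodic preimage of $\mathrm{conv}(V)$, which we do not re-add. Every other $Q_{V,C'}$ sits on vertices lying in no equivalence class of $\lam_1$ — such a vertex would force $V$ itself to have a preimage in $\lam_1$, contrary to $V\in\mathcal F$ — so distinct new polygons, and new polygons versus old classes, never share a vertex.

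Next I would check that $\lam_2$ is a lamination, refines $\lam$, and is a child of $\lam_1$. Refinement is immediate, every new leaf lying inside some $\mathrm{conv}(C')$. For non-crossing: two new polygons attached to distinct $C'$ lie in disjoint hulls; and a new polygon $Q_{V,C'}$ cannot cross an old leaf $\ell\in\lam_1$, for otherwise, pushing forward by the covering $\sigma_d|_{\mathrm{conv}(C')}$, the leaf $\sigma_d(\ell)$ — a leaf of $\lam_1$ by items~2 and~3, hence a boundary leaf of a $\lam_1$-class — would cross the $\lam_1$-polygon $\mathrm{conv}(V)$, which is impossible. For the FDL axioms of $\lam_2$: finiteness is clear; items~2, 3, 6 follow because each $Q_{V,C'}$ is a positively oriented covering of $\mathrm{conv}(V)$ (no critical edges, edges map to edges, the new classes are honest polygons disjoint from the old); item~5 holds because the $d$ preimages of any side of $\mathrm{conv}(V)$ furnished by the portrait $\{Q_{V,C'}\}_{C'}$ are pairwise disjoint; item~7 is inherited, no periodic class having been altered; and item~4 holds with parameter $n+1$ after the generation bookkeeping (we adjoined exactly one new generation of non-periodic leaves and nothing beyond). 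Finally $\sigma_d(\lam_2)=\lam_1$: the image of every new leaf is a side of some $\mathrm{conv}(V)$, hence in $\lam_1$; conversely every leaf of $\lam_1$ is an image of a leaf of $\lam_2$ — the non-maximal-generation ones already having preimages inside $\lam_1$, the maximal ones now acquiring them among the new leaves — and since the new leaves are genuinely new, $\lam_2\ne\lam_1$. Thus $\lam_2$ is a child of $\lam_1$ that refines $\lam$.

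I expect the main obstacle to be the two ``routine'' claims bundled above: that restricting a positively oriented covering $\mathrm{conv}(C')\to\mathrm{conv}(C)$ to the preimages of $V$'s vertices again gives a positively oriented covering of $\mathrm{conv}(V)$ of the same degree (so that $\{Q_{V,C'}\}_{C'}$ is genuinely a sibling portrait with the right side counts), and its companion, that a new polygon cannot cross an old leaf. Both are where the structure of $\lam$ — invariance, backward invariance, positive orientation of classes — is really used, and together they are what makes the chosen pullback simultaneously compatible with $\lam$ and a bona fide FDL; the remaining verifications are bookkeeping.
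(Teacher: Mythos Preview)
Your approach matches the paper's: for each $V\in\mathcal F$ take the $\lam$-class $C\supseteq V$, and in each $\lam$-preimage $C'$ of $C$ place the polygon $Q_{V,C'}=\mathrm{conv}\bigl(\sigma_d^{-1}(V)\cap C'\bigr)$; then check the seven FDL conditions. Your write-up is far more explicit than the paper's two-sentence sketch, and the axiom bookkeeping is fine.

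The gap is in non-crossing. You treat new polygons lying in distinct hulls $\mathrm{conv}(C')$, and you treat new-versus-old; but you omit the case of two \emph{new} polygons $Q_{V_1,C'}$, $Q_{V_2,C'}$ in the \emph{same} $C'$, arising from distinct $V_1,V_2\in\mathcal F$ that happen to lie in one $\lam$-class $C$. When $d_{C'}\ge 2$ your polygons are linked: if $C$ has cyclic order $a\,b\,c\,d$ with $V_1=\{a,b\}$, $V_2=\{c,d\}$, then a degree-$2$ cover $C'$ has cyclic order $a_1b_1c_1d_1a_2b_2c_2d_2$, and $Q_{V_1,C'}=\{a_1,b_1,a_2,b_2\}$ crosses $Q_{V_2,C'}=\{c_1,d_1,c_2,d_2\}$. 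This is not hypothetical; in degree $3$ take $\lam_1$ to be the depth-$1$ FDL over the fixed leaf $\overline{1/8,\,3/8}$ with $V_1=\{1/24,11/24\}$ and $V_2=\{17/24,19/24\}$, and let $\lam$ merge $V_1\cup V_2$ into one critical class $C$ that itself has a degree-$2$ preimage $C'$ (the remaining unit of criticality in degree $3$ permits this). The remedy is to replace each $Q_{V,C'}$ by $d_{C'}$ smaller polygons, one per sheet of the cover $C'\to C$; then all new polygons are pairwise unlinked and the rest of your argument stands. Incidentally, your pushforward step for new-versus-old is also not quite right --- crossing need not push forward under a degree-$\ge 2$ cover --- but that case is in fact vacuous: a $\lam_1$-leaf inside $\mathrm{conv}(C')$ would force a $\lam_1$-class at depth $n{+}1$, which cannot exist (periodic $\lam$-classes contain only periodic points, so $C'$ is strictly preperiodic of depth $n{+}1$). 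The paper's own proof is brief enough that it never commits explicitly to the choice of $Q_{V,C'}$, so this issue is not visible there.
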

\begin{proof}
	The situation of the polynomial is the same as the situation of the equivalence class partly because all the angles of the vertices in the FDL are rational. Even if some rays do not land for $p$, all rational rays must land. Since rays do not cross, the case of the polynomial is the same as of that of the lamination.

	In the lamination, each equivalence class has a certain number of preimages and those  preimages are unlinked. Pulling back all polygons in accordance with the laminational relation cannot fail to be an FDL. Any finite subset of a lamination is a lamination, and for each of the seven conditions to make a lamination FDL, there is a reason why it is satisfied.

\end{proof}

\section{Loops in the closure of the pullback tree?}\label{Loops}

First let us name two phenomena that may be confused with a loop in the closure of the pullback tree that should not be regarded as such.
\begin{enumerate}
	\item It is obvious from the abundance with which these sequences add periodic leaves that sequences to the end of two different trees can arrive at the same place. One example of this is in degree 3 the tree with the root containing the rotational polygons of \_001 and \_112. At least conjecturally, the base of that tree is equal to the base of the tree having the root containing those polygons and additionally, a leaf from \_0 to \_1. While this phenomenon is interesting, it does not illuminate the structure imposed on parameter space by a single pullback tree.
	\item There is presumably an abundance of sequences in the base of the pullback tree that have the same limit. This phenomenon is not particularly useful in polynomial parameter space: although we can convert those sequences into sequences of FDL, they would not be sequences to the end of the tree, and thus there would be no particular relationship between the co-landing loci of those FDL.
\end{enumerate}

Thus, what we mean by a ``loop in the closure of the pullback tree'' is two sequences to the end of the same pullback tree. In such sequences, the co-landing locus of any FDL is a subset of that of the previous. Consider the first pair of non-equal laminations from some index of both sequences. The co-landing loci of those two laminations should intersect, and the intersection should include a polynomial whose lamination is the limit of those two sequences. Such intersection will create a graph structure in parameter space that we model with a graph of that generation of the tree. We find that these correspond to \emph{critical polygons}, which are polygons that map with degree greater than one.

\begin{prop}
	If two different sequences to the end of the same pullback tree limit to laminations with the same equivalence relation, then the q-lamination of that equivalence relation contains a critical polygon. %
\end{prop}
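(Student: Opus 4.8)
The plan is to contrapose: suppose the common limit lamination $\lam_\infty$ has a $q$-lamination whose equivalence relation contains no critical polygon, and show that the two sequences to the end of the pullback tree must agree at every level, hence be the same sequence. The key structural fact is \cref{freecriticality}: at each stage, passing from an FDL to one of its children amounts to inserting a sibling portrait into one of the finitely many round gaps, and the criticality $d-1$ is distributed among the gaps. If no equivalence class of $\lam_\infty$ is a critical polygon, then in the limit all the ``trapped'' criticality lives in round gaps (Fatou gaps, critical leaves, or critical wedges — but the last two are excluded for laminations in the base by \cref{proper} and \cref{criticalsides}), never inside a polygon. I would first record this: in both sequences, every periodic polygon that ever appears maps one-to-one onto its image, because a degree-$\geq 2$ periodic polygon would survive into $\lam_\infty$ and become a critical polygon of its $q$-lamination.

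Next I would argue that, under this hypothesis, each FDL in a sequence to the end of the tree has a \emph{unique} child that is again on a sequence to the end of the same tree — equivalently, that the pullback is forced. Fix an FDL $\lam_n$ in the sequence and a round gap $R$ that must receive a new polygon (or point) at the next step. The image polygon $P \subset \sigma_d(R)$ is a piece of $\lam_\infty$, so it maps one-to-one; by \cref{covering} and the remark following it, its siblings sit in the unique positive cyclic order, and by \cref{iffpositive} a sibling portrait exists. The point is that if two \emph{distinct} sibling portraits of $P$ inside $R$ were both ``correct'' — i.e. both extendable to sequences with the same limit — then in the limit the two competing choices would have to be reconciled, and the only way a round gap can be carved in two genuinely different ways while the limit collapses them to one equivalence class is for some limiting polygon to absorb several preimage-vertices of $P$ with multiplicity, i.e. to be a critical polygon. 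I would make this precise by comparing the two portraits leaf by leaf: the first place they differ produces, after passing to the limit, either crossing leaves (impossible by \cref{compact metric space}) or a polygon of $\lam_\infty$ containing two vertices that both map to the same vertex of $P$, which is exactly a critical polygon.

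From uniqueness of the forced child at every level, an easy induction on the level $k$ shows the two sequences coincide: at level $0$ both are the root, and if they agree at level $k$ then both level-$(k+1)$ laminations are the unique child on an end-sequence, hence equal. This contradicts the assumption that the two sequences are different (in the sense of the preceding discussion in \cref{Loops}, namely that they differ at some finite level). Therefore $\lam_\infty$'s $q$-lamination does contain a critical polygon.

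The main obstacle is the middle step: turning ``the limit collapses two different sibling portraits'' into ``a critical polygon appears.'' One has to handle that the two sequences need not differ exactly at the stage where $R$ is first filled — they might fill $R$ identically for many steps and diverge deep inside a descendant round gap — so the bookkeeping must track where the two end-sequences first branch and then follow the nested round gaps down to that branch point, using \cref{freecriticality}(ii) to see that the criticality available there is finite and must, in the limit, be accounted for by polygons rather than disappear. A secondary subtlety is ruling out that the two sequences differ only in \emph{preperiodic} leaves with the same periodic ``skeleton'': but \cref{proper} forces all periodic structure in the limit to have periodic endpoints of equal period and \cref{criticalsides} controls the critical leaves, so any genuine discrepancy in the limit must show up as a polygon whose preimages were grouped differently — again a critical polygon in the $q$-lamination.
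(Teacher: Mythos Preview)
Your contrapositive works, but it is a substantial detour around what the paper does in two sentences. The paper argues directly: since the two sequences share the root and eventually differ, there is a first level at which they disagree, giving two distinct children $L_{n+1}\ne L'_{n+1}$ of the same $L_n$; both have equivalence relation finer than the common end relation $\lambda_\infty$, and that alone forces a critical class. Spelling out what the paper leaves implicit: the two children differ in a sibling portrait of some $P$, so some preimage vertex $v$ lies in polygons $Q_1\ne Q_2$ in the two portraits; the $\lambda_\infty$-class of $v$ then contains $Q_1\cup Q_2$, which by \cref{covering} contains two distinct preimages of a single vertex of $P$ and is therefore critical; it is a finite polygon by \cref{proper} and \cref{t:nowander}. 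Your key middle step --- two distinct sibling portraits both compatible with $\lambda_\infty$ force a critical polygon --- is exactly this observation, so the contrapositive wrapper and the uniqueness-of-child induction add nothing beyond the direct argument.

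A few of your side claims are off and should be dropped. Critical leaves are \emph{not} excluded from the base of the pullback tree; the paper exhibits one with endpoint $\frac{1}{8}$, and \cref{criticalsides} only says that a gap bordering a critical leaf has all its boundary leaves critical. The discussion of periodic polygons is moot because every FDL in a single pullback tree has exactly the same periodic leaves by definition. And the worry about divergence ``deep inside a descendant round gap'' dissolves once you simply take the first level at which the two sequences differ. None of these is fatal to your argument, but they indicate that the extra scaffolding is not doing any work.
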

\begin{proof}
	At some point the two sequences of laminations must have non-equal sibling portraits, yet those two non-equal FDL both have equivalence relations finer than the end equivalence relation.

\end{proof}

We saw something like this in \cref{fignon-q}, where one way of seeing it is as a limit to the Misiurewicz point. There is one way to reach the point such that the end lamination is a q-lamination. Alternately, we can start with an FDL that contains the point in the boundary of its co-landing locus and then at each step of the sequences, choose the child FDL that has that point in the boundary. Perhaps the best way to phrase it is in terms of co-landing loci.

\begin{prop}
	Give two FDL on the same level of the pullback tree, they have intersecting co-landing loci iff there is an FDL on that level that is courser than either of them.
\end{prop}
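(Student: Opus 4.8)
The plan is to prove both directions by relating "coarser common refinement" to the existence of a shared polynomial, using the fact that co-landing loci are exactly the polynomials whose own equivalence relations are finer than the FDL (the proposition following \cref{realized}) and that every FDL is realized by a polynomial (\cref{realized}). For the easy direction: if there is an FDL $\lam_0$ on that level coarser than both $\lam_1$ and $\lam_2$, then by \cref{realized} pick a polynomial $p$ realizing $\lam_0$, i.e.\ $p$ lies in the co-landing locus of $\lam_0$; since the equivalence relation of $\lam_0$ is finer than its own (trivially) and $\lam_0$ is coarser than $\lam_1$ and $\lam_2$, the relation of $p$ is finer than both $\lam_1$ and $\lam_2$, so $p$ lies in both co-landing loci, which are therefore nonempty in their intersection.

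For the harder direction, suppose $p$ lies in the co-landing locus of both $\lam_1$ and $\lam_2$. Then the equivalence relation $\lambda_p$ of $p$ is finer than both $\lam_1$ and $\lam_2$. The natural candidate for the coarser common FDL is the finite lamination $\lam_0$ obtained by restricting $\lambda_p$ to the (finitely many) equivalence classes that are forced by the classes of $\lam_1$ (equivalently $\lam_2$) --- concretely, take the join, within $\lambda_p$, of the classes appearing in $\lam_1$ and $\lam_2$, i.e.\ for each periodic or preperiodic class of $\lam_1$ look at its $\lambda_p$-saturation and take convex hulls. First I would argue that $\lam_1$ and $\lam_2$ actually have \emph{the same} periodic leaves: both sit on the same level of the same pullback tree, which by definition fixes the set $P$ of periodic leaves, so the periodic parts already agree, and $\lam_0$ will share that same $P$. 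Then I would check that $\lam_0$, so defined, is a finite sublamination of $\lambda_p$ that is coarser than each of $\lam_1,\lam_2$, that it is forward invariant (a sublamination closed under $\sigma_d$, since the classes of $\lam_1$ and $\lam_2$ are and $\lambda_p$ is invariant), and finally that it satisfies all seven conditions of an FDL with the same pullback depth $n$ --- the argument here should be the same bookkeeping as in \cref{childlemma} and \cref{existsinvariant}, namely that pulling back finitely many positively oriented polygons inside an invariant laminational relation produces an FDL, and no new critical leaves or wedges appear because $\lambda_p$ has none at the relevant points.

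The main obstacle I expect is precisely showing that $\lam_0$ lands on \emph{the same level} of the pullback tree as $\lam_1$ and $\lam_2$, rather than merely being some coarser FDL with the same periodic leaves. "Level" in the pullback tree is measured by the pullback depth $n$ in condition (4) of the FDL definition, so I need that the coarsening operation does not change $n$: the preperiodic leaves of $\lam_0$ are coarsenings of those of $\lam_1$, and coarsening a leaf by $\lambda_p$ cannot move it to a different iterate-distance-to-periodic, since $\lambda_p$ is invariant and its classes are finite, so $\sigma_d^{k}(\lam_0$-leaf$)$ is periodic exactly when the corresponding $\lam_1$-leaf's image is. I would make this precise by tracking, for each class, the least $k$ with $\sigma_d^k$ of it periodic and observing it is preserved under saturation by an invariant relation. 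A secondary subtlety is that $p$ might have non-landing rays, but since all vertices involved are rational and all rational rays land (\cite{Orsay}), exactly as in \cref{childlemma}, the combinatorics of $\lambda_p$ at the relevant angles is unambiguous and the coarsening is well defined.
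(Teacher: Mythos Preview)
Your argument is correct and, at its core, coincides with the paper's: both directions ultimately rest on the fact that if $p$ lies in a co-landing locus then the FDL's equivalence relation is finer than $\lambda_p$, together with \cref{realized} for the backward direction. The paper's own proof is a two-line pointer: ``the same reasoning applies as for the previous proposition; mainly this follows from \cref{childlemma}.''

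The one genuine difference is in how you manufacture the common coarser FDL $\lam_0$. You build it in one shot, by saturating the classes of $\lam_1$ under $\lambda_p$, and then take on the burden of re-verifying all seven FDL axioms and, as you rightly flag, the ``same level'' condition. The paper instead applies \cref{childlemma} \emph{iteratively}: start at the root $P$ of the pullback tree (whose co-landing locus contains $p$ since $\lam_1\supset P$), and apply \cref{childlemma} $n$ times with the fixed polynomial $p$. Each application hands you a child that is again an FDL containing $p$ in its co-landing locus, so after $n$ steps you are automatically on level $n$ with an FDL $\lam_0$ whose classes are the $\lambda_p$-classes of the relevant preimage vertices. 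Since $\lam_1$ and $\lam_2$ live on the same level of the same tree, they have the same vertex set as $\lam_0$, and both are finer than $\lambda_p$; hence $\lam_0$ is coarser than each. This route dissolves your ``main obstacle'' entirely---the level is correct by construction, and FDL-hood is inherited at every step from \cref{childlemma} rather than rechecked at the end. Your direct-saturation construction produces the same $\lam_0$, so nothing is wrong; the inductive packaging is just cleaner.
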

\begin{proof}
	The same reasoning applies as for previous proposition. Mainly this follows from \cref{childlemma}.
\end{proof}

This justifies an interest in the finer relation on the FDL on the same level of the pullback tree. The finer relation might be a bit messy in higher degree, so make things easier to draw, we add a notion, which hopefully adds information.
\begin{definition}[trapped criticality]
	The \emph{trapped criticality} of an FDL is the total degree of all the polygons minus the number of polygons.
\end{definition}

\begin{definition}
	The \emph{free criticality} of an FDL is the number of critical polygons, with multiplicity, that fit outside the polygons of the FDL.
\end{definition}

By \cref{freecriticality}, the free criticality and the trapped criticality add to $d-1$.

\begin{conj}
	The complex dimension of the co-landing locus in the set of affine conjugacy classes of degree d polynomials is equal to the free criticality of the FDL.
\end{conj}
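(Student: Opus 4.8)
The plan is to read this conjecture as the parameter-space shadow of the counting identity in \cref{freecriticality}: the trapped criticality $T:=\sum_i(d_i-1)$ of the FDL $\lam'$ should measure the complex \emph{codimension} of the co-landing locus inside the space $\mathcal{M}_d$ of affine conjugacy classes of degree $d$ polynomials, a variety of complex dimension $d-1$, while the free criticality $F:=(d-1)-T$ (equal by \cref{freecriticality}) measures what dimension is left. I would first put coordinates on $\mathcal{M}_d$ adapted to critical data: away from a proper subvariety a polynomial is determined by, and locally coordinatized by, the positions of its $d-1$ critical points together with their critical values (more invariantly, by the critical-value map into the appropriate symmetric product). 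By \cref{realized} the co-landing locus $\mathcal{C}$ of $\lam'$ is nonempty, every $f\in\mathcal{C}$ has connected Julia set, and every leaf of $\lam'$ is a leaf of the $q$-lamination of $f$; at a generic point of any component of $\mathcal{C}$ all the cycles carrying the periodic polygons of $\lam'$ are repelling, neutral landing points occurring only on lower-dimensional subsets and so not affecting the dimension count.

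Next I would extract the structural decomposition. Applying \cref{siblinginvariant} and \cref{freecriticality} to the $q$-lamination of a generic $f\in\mathcal{C}$, its $d-1$ critical points split into $F$ \emph{free} ones -- those accounted for by critical polygons (or critical Fatou gaps) lying in the round gaps of $\lam'$ -- and, for each critical polygon $P_i$ of $\lam'$ of degree $d_i$, a block of $d_i-1$ critical points \emph{subordinate to $P_i$}, whose forward orbits are captured into the finite (pre)periodic nest of gaps that $\lam'$ attaches to $P_i$. The key local claim is that near such an $f$, membership in $\mathcal{C}$ is equivalent to the conjunction of exactly $T$ holomorphic ``capture'' conditions, one for each subordinate critical point: there is an $N\ge 0$ with $f^N$ of that critical point lying on the finite set of repelling (pre)periodic points that $\lam'$ pins down. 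Once these $T$ conditions hold and $f$ is near the chosen base point, all remaining co-landings demanded by $\lam'$ follow automatically, since every leaf of $\lam'$ is obtained by pulling back, under finitely many holomorphically varying branches of $f^{-1}$, structure already present at the base point and stable under perturbation (the repelling cycles, their multipliers, and the rational rays landing at them move holomorphically because $f$ has no parabolic or Siegel cycles -- here \cref{criticalsides} and the argument of \cref{realized} are what exclude the Siegel/rotation gaps). Thus, locally, $\mathcal{C}=\{\Phi=0\}$ for a holomorphic map $\Phi=(\Phi_1,\dots,\Phi_T)\colon\mathcal{M}_d\to\mathbb{C}^{T}$.

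It then remains to show $\{\Phi=0\}$ has the expected dimension $F$ at every point of $\mathcal{C}$. The inequality $\dim_{\mathbb{C}}\mathcal{C}\ge F$ is soft: from a base point one perturbs only within the codimension-$\le T$ locus keeping the subordinate critical points captured, and by the stability just invoked the co-landings persist, so that locus lies in $\mathcal{C}$ and has dimension $\ge(d-1)-T=F$. The reverse inequality $\dim_{\mathbb{C}}\mathcal{C}\le F$, equivalently the independence of $\Phi_1,\dots,\Phi_T$ (that $d\Phi$ has rank $T$ along $\mathcal{C}$), is the real content and the main obstacle; it is a Thurston-rigidity-type transversality statement, of the same flavor as the smoothness of the ``critical-orbit-relation'' curves and surfaces in Milnor's study of cubic polynomials and in the postcritically finite theory. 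I would attack it the standard way: a nonzero kernel vector of $d\Phi$ at $f\in\mathcal{C}$ yields a nontrivial infinitesimal, hence after integration a genuine quasiconformal, deformation of $f$ preserving every subordinate critical orbit and every co-landing of $\lam'$; pushing the associated Beltrami form onto the Julia set and iterating produces an $f$-invariant line field supported on a set built from the postcritical data, which is impossible because $f$ has no Siegel disks (\cref{criticalsides}), no Cremer points, and is locally connected at the relevant repelling cycles -- the usual ``no invariant line fields'' contradiction of Thurston rigidity. Granting this, every component of $\mathcal{C}$ has complex dimension exactly $(d-1)-T=F$.

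The decisive step, to reiterate, is this transversality: upgrading ``$\mathcal{C}$ is cut out by $T$ equations'' to ``$\mathcal{C}$ is cut out by $T$ \emph{independent} equations'' is where one must invoke quasiconformal-rigidity machinery rather than pure combinatorics, and it is exactly the point at which all the earlier hypotheses (no Siegel or Cremer cycles, local connectivity, connected Julia set from \cref{realized}) are genuinely used. A secondary technical point is to make the local identification $\mathcal{C}=\{\Phi=0\}$ rigorous in the presence of Fatou components attached to the free critical points -- that is, to verify those free critical points really do enjoy the claimed $F$ dimensions of freedom and impose no hidden relations -- which should follow once more from stability theory under the no-parabolic/no-Siegel hypotheses.
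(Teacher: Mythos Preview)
The paper does not prove this statement: it is explicitly labeled a \emph{conjecture} and no proof is offered. There is therefore nothing in the paper to compare your argument against.

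As for your proposal on its own merits, it is a strategy sketch rather than a proof, and you yourself identify the gap. Two points deserve emphasis. First, the step where you assert that membership in $\mathcal{C}$ is locally equivalent to the vanishing of $T$ \emph{holomorphic} functions $\Phi_1,\dots,\Phi_T$ is not justified: co-landing of external rays is a topological condition, and while landing points of repelling periodic rays do move holomorphically on the hyperbolic-like locus, turning ``these rays co-land'' into ``this holomorphic function vanishes'' requires an argument you have not supplied, especially since the co-landing locus need not be contained in any hyperbolic component and may meet loci with parabolic or Cremer cycles. Second, and more seriously, the transversality step --- that $d\Phi$ has full rank $T$ along $\mathcal{C}$ --- is exactly where your argument becomes aspirational. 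The ``no invariant line fields'' machinery you invoke is a theorem in specific settings (postcritically finite maps, Latt\`es exclusions, certain structurally stable families), not a general principle one can call on whenever a deformation preserves some combinatorial data; the passage from ``nonzero kernel vector of $d\Phi$'' to ``invariant line field on the Julia set'' is precisely the content of such theorems, not a routine consequence. Absent a precise rigidity theorem tailored to this situation, the inequality $\dim_{\mathbb{C}}\mathcal{C}\le F$ remains open, which is presumably why the author left the statement as a conjecture.
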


\begin{definition}[generational FDL graph]
	The \emph{generational FDL graph} is a directed graph whose vertices are all the FDL from one level of one pullback tree such that there is an edge from $a$ to $b$ iff
	\begin{enumerate}
		\item the trapped criticality of $b$ is one greater than that of $a$ and
		\item the equivalence relation $a$ is finer than the equivalence relation of $b$.
	\end{enumerate}
\end{definition}

See \cref{5thgeneration}.

\begin{cor}
	The transitive closure of the generational FDL graph is the same as the proper subset relation on the co-landing loci of the FDL in the graph.
\end{cor}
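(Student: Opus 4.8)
The plan is to first translate both sides of the claimed identity into the \emph{strictly finer} relation on the finitely many FDL that label the vertices of the graph, and then to show that this relation is exactly the transitive closure of the edge relation.

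\emph{The dictionary.} By the proposition relating a polynomial's lamination to the co-landing locus of an FDL, a polynomial lies in the co-landing locus of an FDL $\mathcal{L}'$ precisely when the equivalence relation of its lamination is finer than that of $\mathcal{L}'$; hence ``$a$ finer than $b$'' immediately gives $\mathrm{coland}(a)\subseteq\mathrm{coland}(b)$. For the converse I would invoke \cref{realized}: it furnishes, for any FDL $a$, a polynomial $p$ whose ray-landing pattern on the (necessarily rational) angles occurring in $a$ is exactly the equivalence relation of $a$, so $p\in\mathrm{coland}(a)$; if $\mathrm{coland}(a)\subseteq\mathrm{coland}(b)$ then $p\in\mathrm{coland}(b)$, which forces every $b$-class to be a union of $a$-classes, i.e.\ $a$ is finer than $b$. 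Applying the same construction to $b$ when $a$ is \emph{strictly} finer than $b$ produces a polynomial realizing $b$ that cannot lie in $\mathrm{coland}(a)$, so the inclusion of co-landing loci is proper. Thus $\mathrm{coland}(a)\subsetneq\mathrm{coland}(b)$ holds if and only if the equivalence relation of $a$ is strictly finer than that of $b$.

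\emph{Edges go from finer to coarser.} All FDL on a fixed level of a given pullback tree share the same periodic part $P$ and, by condition 5 of the FDL definition together with \cref{covering}, the same set of vertices down to the common preperiodic depth; the only freedom is how those vertices are grouped into polygons (the sibling-portrait choices). Consequently passing from such an FDL to a coarser one on the same level amounts to merging families of sibling polygons (polygons sharing a common image), and merging $m\ge 2$ of them raises the trapped criticality by exactly $m-1$, by the bookkeeping in \cref{freecriticality}. In particular an edge $a\to b$ forces $a$ finer than $b$ with distinct trapped criticalities, hence $a$ strictly finer than $b$; since the strictly-finer relation is a strict partial order it is its own transitive closure, so every directed path $a\to\cdots\to b$ yields $a$ strictly finer than $b$, and by the dictionary $\mathrm{coland}(a)\subsetneq\mathrm{coland}(b)$. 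This is the easy inclusion.

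\emph{Filling in paths, and the main obstacle.} For the reverse inclusion, suppose $\mathrm{coland}(a)\subsetneq\mathrm{coland}(b)$, so $a$ is strictly finer than $b$; then $b$ is obtained from $a$ by a collection of sibling-merges, and it suffices to realize a single merge of a sibling family of size $m$ as a chain of $m-1$ merges of two \emph{consecutive} members at a time, since each such two-polygon merge raises trapped criticality by exactly one and makes the equivalence relation strictly coarser, i.e.\ is an edge of the generational FDL graph; concatenating these edges over all the merges carrying $a$ to $b$ gives the required directed path. The delicate point — and the step I expect to be the main obstacle — is verifying that each intermediate, partially-merged collection is again an FDL of the same level: one must check that merging two consecutive polygons of a sibling portrait always yields a bona fide positively oriented polygon (using the ordering and orientation facts of \cref{covering} and \cref{iffpositive}, and noting that such a merge never forces an identification outside the level since the common image is already present), that the result still forms sibling portraits in every round gap, and that the remaining FDL conditions are inherited, invoking \cref{existsinvariant} and \cref{addsiblings} as needed. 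A secondary subtlety, needed so that the interpolating chain has exactly the right length to be read off the trapped-criticality grading of the graph, is the observation above that FDL on a fixed level share a vertex set, so a nontrivial merge strictly increases trapped criticality rather than merely absorbing degenerate classes.
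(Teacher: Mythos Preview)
The paper's own proof is a single sentence: ``Follows from the definitions and \cref{realized}.'' It invokes \cref{realized} for exactly the dictionary you set up --- $\mathrm{coland}(a)\subsetneq\mathrm{coland}(b)$ iff $a$ is strictly finer than $b$ --- and treats the remaining combinatorics as self-evident. Your dictionary argument and your forward inclusion are correct and match the paper's intent; indeed you are more careful than the paper in spelling out why \cref{realized} gives the converse direction of the dictionary and why FDL on a fixed level share a vertex set.

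Where you go beyond the paper is the reverse inclusion, and here your ``sibling-merge'' framing needs one more idea. Your argument is clean when $\sigma_d(a)=\sigma_d(b)$: then every $b$-polygon that is not an $a$-polygon is a union of genuine $a$-siblings, and merging two consecutive ones raises trapped criticality by exactly one, as you say. But when $\sigma_d(a)\neq\sigma_d(b)$, the depth-$n$ polygons of $a$ that must coalesce inside a single depth-$n$ polygon of $b$ need not be $a$-siblings at all --- their images can be different polygons of $\sigma_d(a)$ that only become equal after the lower-depth merges have been performed. Worse, merging two siblings at depth $k<n$ removes boundary leaves of the merged polygon, and forward invariance (condition~3) together with condition~6 then forces merges among their depth-$(k{+}1)$ preimages; a single ``step'' at depth $k$ is therefore not a single edge of the graph. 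The fix is to run your argument inductively on depth (equivalently on level): first interpolate from $\sigma_d(a)$ to $\sigma_d(b)$ at level $n{-}1$, then at each stage lift to level $n$ and perform the now-genuine sibling merges at depth $n$. You already flag ``verifying that each intermediate is again an FDL of the same level'' as the main obstacle, and this depth-induction is precisely where that verification lives; the paper simply does not address it.
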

\begin{proof}
	Follows from the definitions and \cref{realized}.
\end{proof}

\begin{figure}[ht]
	\includegraphics[width=0.48\linewidth]{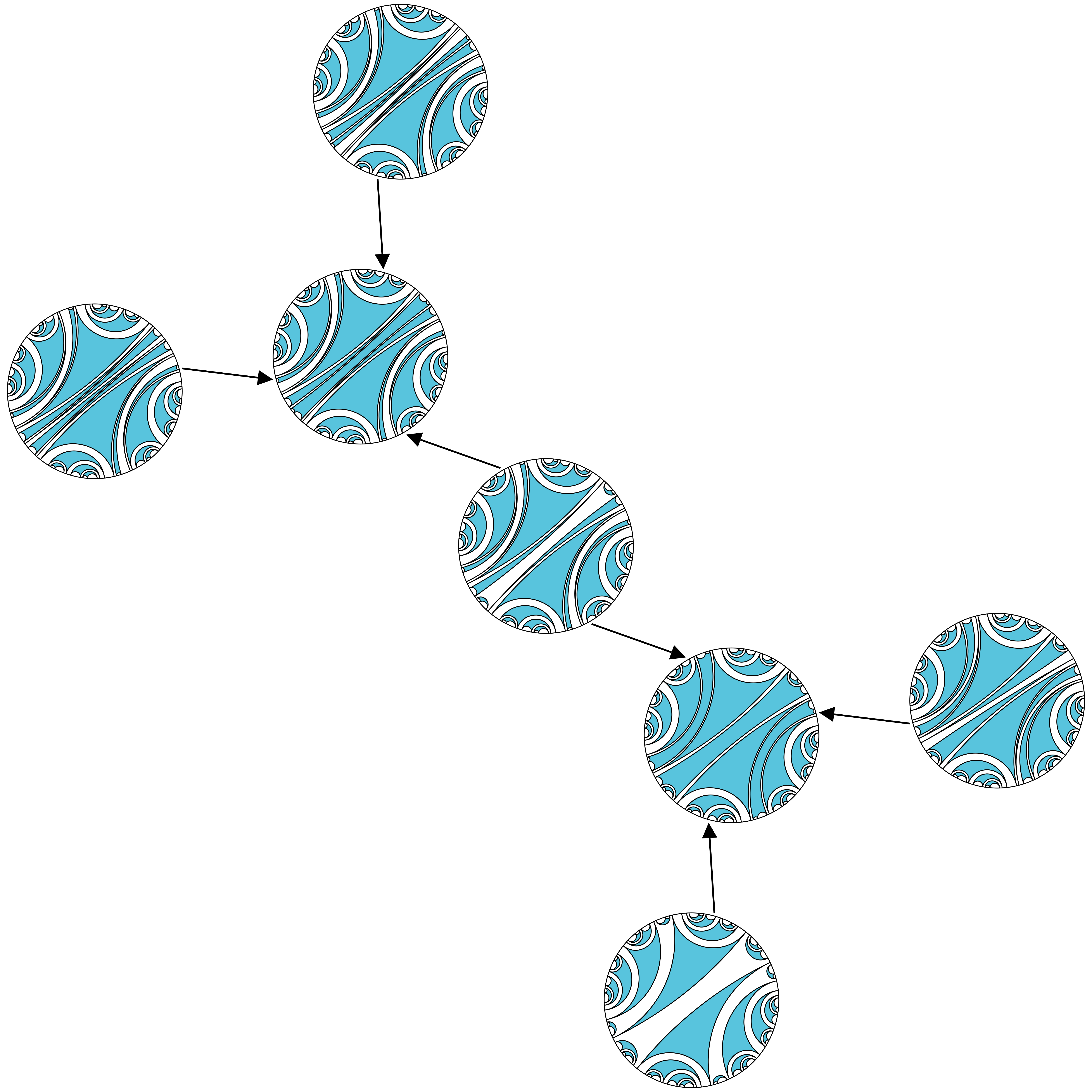}
	\includegraphics[width=0.48\linewidth]{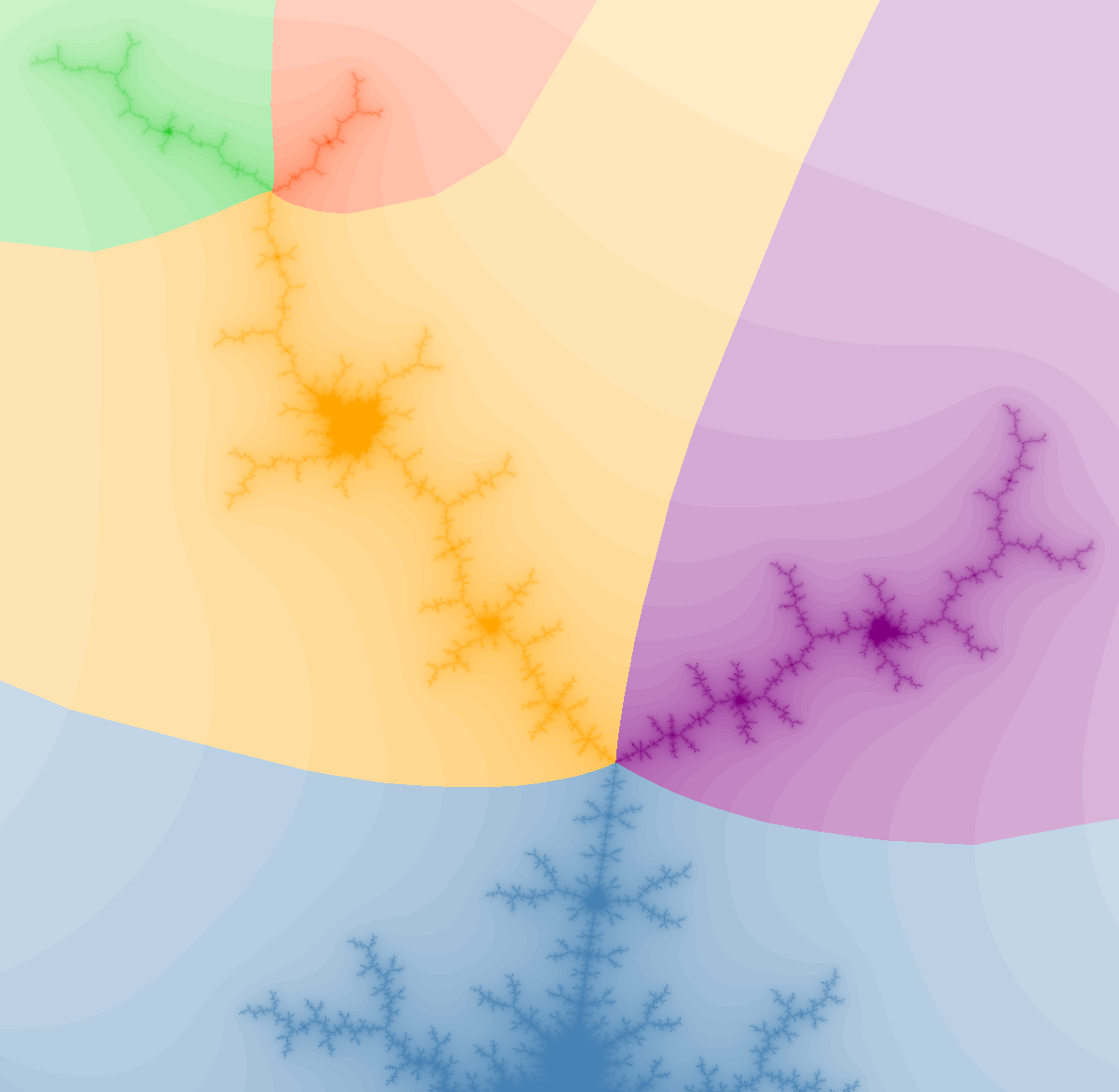} \\
	\caption{Pictured is the 5th generation of the quadratic rabbit, and a coloring of the co-landing loci.}
	\label{5thgeneration}
\end{figure}

\section{Future Directions of Research}

We list a number of questions and conjectures approximately ordered by how combinatorial they are. We start with the most combinatorial topics and end with the most analytic.
\begin{enumerate}
	\item The number of FDL on level $n$ for any pullback tree forms a sequence that, to the best of the author's knowledge, does not appear in The On-Line Encyclopedia of Integer Sequences (OEIS), with one exception.
	      \begin{conj}
		      Consider the quadratic pullback tree starting with the lamination containing only the leaf from $\_01$ to $\_10$. The number of FDL on each level of the tree is A152046 in \cite{OEIS}.
	      \end{conj}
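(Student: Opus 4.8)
The plan is to make the pullback operation at this root fully explicit, to extract from it a recursion --- equivalently, a functional equation for the generating function $A(x)=\sum_n a_n x^n$, where $a_n$ is the number of FDL on level $n$ --- and then to match that recursion with the one OEIS records for A152046. Write $m$ for the root leaf from $\_01$ to $\_10$; these two angles form a $\sigma_2$-cycle and $\sigma_2$ fixes the leaf $m$, so the complement of the root lamination is one degree-$1$ round gap together with one degree-$2$ round gap $V$, the degree-$1$ gap mapping onto $V$ and $V$ mapping onto the whole disk. By \cref{freecriticality}, at every level there is exactly one critical gap and it has degree $2$ (it may be a round gap, or a polygon whose image is the leaf $m$), while every degree-$1$ gap pulls back in a unique way, so no branching of the pullback tree comes from degree-$1$ gaps. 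In particular all new leaves of a level-$1$ FDL sit over $V$, and the direct check that among the four preimage leaves of $m$ there are exactly two ways to pick two disjoint siblings --- in one of which \cref{covering}'s closure requirement forces the collection to complete to a quadrilateral gap --- already gives $a_0=1$ and $a_1=2$.

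First I would give a purely combinatorial description of a level-$n$ FDL through the bookkeeping of condition (4): a leaf $\ell$ must be pulled back exactly when $\sigma_2^{\,n-1}(\ell)$ is periodic, after which condition (6) forces the convex hulls of the resulting equivalence classes. Grading the leaves by \emph{depth} (the least $k$ with $\sigma_2^{\,k}(\ell)$ periodic), the level-$(n+1)$ FDL refining a fixed level-$n$ FDL are exactly the results of choosing, independently for each depth-$n$ leaf that still requires a preimage, a complete sibling collection compatible with the leaves already present, and then adjoining the condition-(6)-forced leaves. The aim of this step is to show that, after organizing everything by the forward orbit of the critical gap, the number of such refinements depends only on a bounded amount of \emph{state} carried by the FDL --- essentially the critical gap together with its itinerary back to $m$ --- so that $a_n$ obeys a finite, state-indexed recursion.

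Next I would solve that recursion by conditioning on the first pullback step, which in degree $2$ presents exactly the dichotomy already visible at level $1$: either one adjoins $m$ together with its sibling that is unlinked with $m$, pushing a degree-$2$ round gap one level deeper, or one is forced into the quadrilateral, producing a degree-$2$ polygon whose image is $m$; an analogous choice recurs at each later step. Translating this into generating functions --- $A(x)$ enumerating FDL and auxiliary series recording the quadrilateral/critical-polygon states --- the two branches should yield a coupled system of polynomial identities whose elimination produces a single algebraic equation for $A(x)$. I would then identify this equation (or the recurrence it encodes) with the one listed for A152046, using $a_0=1$, $a_1=2$, and the next few coefficients as a consistency check. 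Injectivity throughout is read off the quotient: the equivalence relation of an FDL records its polygons, hence the sibling collections chosen, so distinct admissible choices give distinct FDL.

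The hard part will be controlling the quadrilateral state and, recursively, the critical polygons it spawns: one must show that pulling back a degree-$2$ polygon whose image is a leaf --- together with the leaves condition (6) then forces around it --- is governed by the same small set of states, and that these forced leaves never merge two equivalence classes in a way that destroys an otherwise-available choice. A related difficulty is verifying that the recursion genuinely \emph{closes}: preperiodic leaves created at level $k$ keep spawning preimages at every later level, scattered across several degree-$1$ gaps as well as the critical gap, and one must confirm that this scattering always contributes only the unique forced pullback and never extra branching, so that $a_{n+1}$ really is a function of the current state alone. Once the functional equation for $A(x)$ is established, matching it to A152046 should be routine.
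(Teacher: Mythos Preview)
The statement you are trying to prove is presented in the paper as an open \emph{conjecture} in the section on future directions of research; the paper offers no proof whatsoever, only the remark that ``FDL are varied in their number of children and their children are varied in how many subsequent generations until the next branch point,'' together with the observation that in the quadratic case ``pulling back short gives the longest delay between branch points, and it also increases the order of the next branch point.'' There is therefore no argument in the paper to compare yours against; what you have written is a plan of attack on an open problem.

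As a strategy your outline is sensible: isolate where the branching of the pullback tree actually occurs, encode it as a recursion for $a_n$, and match that recursion against what OEIS records for A152046. Your base cases $a_0=1$, $a_1=2$ are correct. One small technical slip: at level $0$ the long round gap $V$ is not a degree-$2$ gap in the sense of \cref{degree} but what the Appendix calls \emph{partly critical}, since \cref{freecriticality}(iii) requires every polygon already to sit in a full sibling portrait, which only holds from level $1$ on. This does not damage your plan but does make the first step slightly exceptional.

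The substantive gap is precisely the step you yourself label as hard. You assert that the number of children of a level-$n$ FDL depends only on ``a bounded amount of state,'' essentially the itinerary of the critical gap, but you give no argument for this, and the paper's own warning points the other way: both the delay until the next branch point and the order of that branch vary with earlier choices, and can grow. A naive finite-state model is therefore unlikely to close; at minimum you would need to name the states explicitly, prove the list is finite (or exhibit a graded family of states with a tractable transfer rule), write down the transitions, and only then compare with A152046. Until that is carried out, the proposal remains a plausible programme rather than a proof.
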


	      Anyone attempting to work on this should look to \cite{ManimHilton} to for instructions on obtaining pictures of other pullback trees. Moreover, you should be warned of two things in any pullback tree, FDL are varied in their number of children and their children are varied in how many subsequent generations until the next branch point. One observation in the quadratic case is that pulling back short gives the longest delay between branch points, and it also increases the order of the next branch point. The reason is that it puts multiple polygons in the image of the critical round gap.

	\item
	      \begin{conj}
		      The co-landing locus of any FDL intersected with the connectedness locus is closed and perhaps connected.
	      \end{conj}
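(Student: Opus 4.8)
\medskip
\noindent The plan is to prove the closedness assertion unconditionally and to regard the connectedness assertion as genuinely open; the latter is where I expect the real difficulty to lie. Fix an FDL $\lam'$. By the definition of its co-landing locus (equivalently, by the proposition expressing membership via the finer relation), a polynomial $p$ belongs to it precisely when, for every pair $\theta_1,\theta_2$ of angles lying in a common equivalence class of $\lam'$, the dynamic rays $R_{\theta_1}$ and $R_{\theta_2}$ of $p$ land at one and the same point. Since $\lam'$ has finitely many leaves, it has finitely many non-degenerate classes, each finite, so this is a finite conjunction of conditions indexed by unordered pairs of $\lam'$-identified angles. A finite intersection of closed sets is closed, and the connectedness locus of degree-$d$ polynomials is compact, so it suffices to show that for each fixed pair of \emph{rational} angles $\theta_1,\theta_2$ the set
\[
Z(\theta_1,\theta_2)=\{\,p\text{ in the connectedness locus}:R_{\theta_1}(p)\text{ and }R_{\theta_2}(p)\text{ land at the same point}\,\}
\]
is closed in the connectedness locus. (Rationality is automatic, since every endpoint of every leaf of an FDL is periodic or preperiodic.)

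\medskip
\noindent For closedness of $Z(\theta_1,\theta_2)$, take $p_n\to p$ with $p_n\in Z(\theta_1,\theta_2)$. The connectedness locus is closed, so $p$ has connected Julia set, and rational rays land for such $p$ by \cite{Orsay}, at repelling or parabolic (pre)periodic points. Let $z_n$ be the common landing point of the two rays for $p_n$; the $z_n$ lie in uniformly bounded filled Julia sets, so after passing to a subsequence $z_n\to z$. By the classical upper semicontinuity of prime-end impressions of a polynomial as a function of the polynomial, $z\in\operatorname{Imp}_p(\theta_1)\cap\operatorname{Imp}_p(\theta_2)$. Now the impression of a rational ray of $p$ is a single point, namely its landing point: this is classical for preperiodic rays, and for periodic rays it is the substance of Kiwi's landing machinery (cf.\ \cite{KIWI2004207} and the discussion preceding \cref{realized}). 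Hence $z$ equals both landing points, so $p\in Z(\theta_1,\theta_2)$, completing the proof that the co-landing locus intersected with the connectedness locus is closed. The one technical point to pin down carefully is the point-impression statement for \emph{periodic} rational rays at repelling cycles in the presence of an irrationally neutral cycle; I would either cite this from the landing-theorem literature or restrict to the sublocus where it is already available, since the co-landing locus of a given FDL contains polynomials of every such type.

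\medskip
\noindent The connectedness assertion is the main obstacle, and I would not try to settle it with the combinatorial tools of this paper. One line of attack is to decompose the co-landing locus (inside the connectedness locus) as a finite intersection of closures of ``combinatorial pieces''---wakes cut out by pairs of parameter rays at the angles occurring in $\lam'$---each of which is connected, and to argue that the intersection is nested enough to remain connected; in degree $2$ this is close to the theory of wakes and sub-wakes of the Mandelbrot set, but the higher-degree analogue in the generality needed is not available. A second, more analytic route, in the spirit of \cref{realized} and \cref{childlemma}, is to join any two polynomials realizing $\lam'$ by an explicit path that stays in the co-landing locus, built by quasiconformal/parabolic surgery: one deforms through maps whose laminational relations are all finer than $\lam'$, using \cref{childlemma} to keep producing compatible refinements along the way. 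Making such a deformation genuinely continuous, keeping it inside the connectedness locus, and never destroying a co-landing identification is the crux, and this appears to require substantially more input (puzzle and para-puzzle techniques, pinching deformations) than is developed here.
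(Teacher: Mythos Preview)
The paper does not prove this statement: it appears verbatim as an open conjecture in the Future Directions section, with no argument offered for either half. So there is no ``paper's own proof'' to compare against; you are attempting more than the author does.

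On your closedness argument: the outline via upper semicontinuity of impressions is the natural one, but the step you yourself flag is a genuine gap, not a routine citation. For a polynomial in the connectedness locus with a Cremer cycle, the Julia set is not locally connected, and it is \emph{not} known in general that a rational external ray has singleton impression; Kiwi's realization theorem, which the paper invokes in \cref{realized}, explicitly excludes irrationally neutral cycles for exactly this reason. Your fallback of ``restricting to the sublocus where it is already available'' then no longer proves closedness in the full connectedness locus, which is what the conjecture asks. An alternative route through continuity of landing points of rational rays runs into the same wall from a different side: landing points of periodic rays can jump at parabolic parameters, so one must control both rays simultaneously across such bifurcations, and that control is precisely what is missing. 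In short, the closedness half is not settled by your argument, and the paper is right to leave it conjectural.

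Your assessment of the connectedness half is accurate and matches the paper's stance: it is the harder part, it lies outside the combinatorial toolkit developed here, and the two strategies you sketch (wake decompositions, surgery paths) are reasonable programs rather than proofs.
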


	      Regarding the closed part, Proposition 5.1 \cite{schleicher1998} states that the landing point of a rational ray of a quadratic polynomial will very continuously with the parameter. Hopefully, this can be generalized.

\end{enumerate}

\section{Appendix}

The following proof is vestigial and perhaps meritless. It has been provided as a demonstration that the notion of canonical pullback laminations can be developed without the use of critical chords.

\begin{proposition}\label{vestigial}
	Suppose there is an FDL, $L$, where all round gaps have a degree and at least one round gap has a degree of more than one. Then there exists a lamination in the base of $L$'s branch of the pullback tree with a hyperbolic gap inside each of the round gaps of $L$ such that the hyperbolic gap has the same degree as the round gap.
\end{proposition}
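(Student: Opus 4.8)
The quickest route to the statement exactly as written is to notice that its hypothesis is vacuous for a non-empty lamination, so there is nothing to prove. Indeed, $\sigma_d$ sends a round gap to a round gap (circle arcs map to circle arcs, and if they wrap all the way the image is not a proper gap), so if every round gap of $L$ has a degree then $\sigma_d$ permutes the finitely many round gaps of $L$ into eventually periodic orbits. On a periodic cycle $R_1\to\cdots\to R_p\to R_1$ of total degree $\delta=\delta_1\cdots\delta_p$, comparing total circle-arc lengths gives $d\,\lambda_i=\delta_i\,\lambda_{i+1}$, hence $\delta=d^p$; since each $\delta_i\le d$ this forces every $\delta_i=d$, and then counting boundary leaves --- each leaf of $\partial R_{i+1}$ has $d$ preimage leaves on $\partial R_i$, so $n_i=d\,n_{i+1}$ --- forces $n_i=0$, i.e.\ each $R_i$ is the whole disk, impossible unless $L$ is empty. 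So $L$ has no periodic round gap, hence no round gap of degree $>1$ at all (a preperiodic one would have nowhere eventually periodic to land, since periodic round gaps do not exist and a round gap cannot map to a polygon). This makes the proposition vacuously true.

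What the proposition is really after is the non-vacuous reading in which ``round gap of degree $>1$'' means ``critical round gap'' --- a round gap whose basis maps non-injectively, for instance the big round gap of the rabbit-triangle root. For that version the plan is to grow, inside each critical round gap $R$ of $L$, a canonical Fatou gap by an explicit infinite pullback. First reduce to one $R$: by the way sibling portraits are inserted (\cref{sibling}, \cref{freecriticality}) a pullback into one round gap changes only that gap's interior, so the pullbacks in distinct $\sigma_d$-orbits of round gaps are independent and can be prescribed separately; and since every round gap is eventually carried into a periodic structure, it is enough to fix the choices on that periodic part and then pull back along the finitely many preperiodic tails. Inside the periodic structure carrying $R$, fix a periodic leaf $\ell$ (either a periodic boundary leaf of $R$ already present in $L$ --- as with the rabbit-triangle edge on the boundary of its critical gap --- or one that first appears on the boundary of the critical region at the initial pullback step); $\ell$ will be the major of the gap we grow.

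Now perform the ``canonical (short) pullback'': at each stage, into every critical round gap descending from $R$, insert the sibling portrait of $\ell$ (or of its relevant iterate) obtained from the ``all preimage vertices'' portrait --- which exists by the remark following \cref{covering} --- by cutting it into its one-to-one constituents and arranging them, the only genuine choice, so that exactly one newly created round gap is again critical of the same type and still carries (the relevant image of) $\ell$ on its boundary. Keeping the criticality concentrated this way produces a strictly nested sequence of critical round gaps $R=S_0\supset S_1\supset S_2\supset\cdots$, all with the major $\ell$ on their boundary and with $\sigma_d^{p}(S_{k+1})=S_k$ for the appropriate $p$. Carrying out the forced pullbacks everywhere else so that the FDL axioms on preimages (and on degrees, where applicable) are respected, every lamination of the sequence is an FDL, each is the $\sigma_d$-image of the next, and the first is the root; so it is a sequence to the end of $L$'s branch. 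By \cref{limit contains} it converges to a lamination $\mathcal L_\infty$ in the base of that branch, which by \cref{siblinginvariant,proper,realized} is realised by a polynomial, and $F:=\bigcap_k S_k$ is a gap of $\mathcal L_\infty$ contained in $R$, carrying $\ell$ on its boundary, with $\sigma_d^{p}(F)=F$ and $\sigma_d^{p}|_{\partial F}$ of the same critical type as $R$; the images of $F$ around the cycle and its preimages along the tails cover the other critical round gaps.

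The one delicate point --- and where I expect the real work --- is that $\partial F$ contains no arc of $\mathbb S$, i.e.\ that $F$ is a genuine hyperbolic gap rather than merely a smaller round gap. The combinatorial argument is a geometric-decay estimate: $\sigma_d$ expands arc-length, while the ``short'' portrait deletes a definite proportion of each $S_k$'s circle arcs, so the total arc-length of $S_k$ tends to $0$; one must check that this proportion stays bounded below (immediate in degree $2$, a short estimate in general, and this is exactly what the ``pull back short'' normalisation is designed to make available). Alternatively one can sidestep the estimate by invoking \cref{realized}: in the realising polynomial, $F$ is a periodic Fatou component whose first return is a branched cover of degree $\ge 2$ and there are no neutral cycles, so the external angles landing on $\partial F$ form a Cantor set and contain no arc. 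Either way, doing this inside every critical round gap exhibits a single lamination in the base of $L$'s branch with a hyperbolic gap of the prescribed type inside each, which is the content the proposition intends.
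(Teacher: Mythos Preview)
Your vacuity argument is wrong, and the error is in the very first move: ``$\sigma_d$ permutes the finitely many round gaps of $L$''. It does not. If $R$ is a round gap of $L$ with a degree, then $\sigma_d(R)$ is the region bounded by $\sigma_d(\partial R)$, which consists of leaves of $L$ and arcs --- but that region will in general properly contain polygons of $L$, so it is \emph{not} a gap of $L$. What is true (and what the paper's proof uses) is that $\sigma_d(R)$ is a gap of $\sigma_d(L)=L_{n-1}$, the parent of $L$ in the pullback tree. Your length-and-leaf-count bookkeeping therefore never gets off the ground.

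A concrete witness that the hypothesis is genuinely satisfiable: take $L$ to be any level-$1$ FDL in the rabbit pullback tree, i.e.\ the rabbit triangle together with its sibling triangle. By \cref{freecriticality}(iii) every gap of $L$ has a degree, both triangles have degree $1$, and by the formula $\sum(d_i-1)=d-1=1$ exactly one round gap has degree $2$. (That degree-$2$ round gap maps $2$--to--$1$ onto the \emph{partly critical} round gap of $L_0$, which is \emph{not} a gap of $L$.) So the proposition is not vacuous.

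Relatedly, your ``reinterpretation'' misreads the definitions. The big round gap of the bare rabbit triangle is not a round gap of degree $>1$; it has no degree at all in the sense of \cref{degree}, because its boundary does not map as a covering (the paper calls such gaps \emph{partly critical}). A round gap of degree $>1$ already \emph{is} a critical round gap that covers its image; the hypothesis ``all round gaps have a degree'' is exactly what places $L$ past the root so that such gaps exist.

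Your second half has the right shape --- nest critical round gaps along a sequence to the end and intersect --- and is close in spirit to the paper's argument, but since it is aimed at the wrong statement it does not prove the proposition. The paper organises the construction differently: it defines a self-map $f_i$ on the set of critical round gaps of $L_i$ (recording which critical gap of $L_i$ the image of $R$ eventually feeds into), pulls back so that each critical round gap acquires a critical sub-gap of the same degree, and then argues directly that the nested intersection meets the circle in uncountably many points by tracking evenly spaced preimages under the degree-$w$ return. Your alternative of invoking \cref{realized} to rule out boundary arcs would be circular here, since the paper presents this proposition precisely as a critical-chord-free route to the canonical hyperbolic lamination.
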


\begin{proof}
	We form a sequence to the end of the pullback tree, $L \in \{L_i\}$, by describing the sibling portrait in each round gap, but first we must grasp all the round gaps involved. $L_0$ is the set of periodic leaves in $L$. Each round gap of $L_0$ is either degree 1 or partly critical, here partly critical means a round gap whose image contains the circle, but does not map as a cover. Thus, each round gap of $L_0$ is either partly critical or carries homeomorphically onto a partly critical round gap. Each critical gap of $L_{i>0}$ must be a subset of a partly critical round gap of $L_0$. By \cref{freecriticality}, all round gaps of $L_{i>0}$ have a degree. Thus, if $R$ is a round gap with degree $d$ of $L_{i+1}$, then $R$ maps as a degree $d$ covering map onto a gap of $L_i$.

	Let $L=L_n$, and fix $i\geq n$. Every round gap of $L_i$ is either critical, homeomorphically carries onto a critical round gap, onto a partly critical gap of $L_0$. Considering the last case, the partly critical round gap of $L_0$ may contain a critical round gap of $L$, or it may contain only critical polygons of $L$. In the second case the round gaps of $L$ in the partly critical gap are degree 1. But since $\sigma_d$ is expanding and round gaps contain circle arcs on their boundary, eventually some iterate of any round gap will contain a critical round gap of $L$. Thus starting with any degree 1 round gap of $L$, some subset of that gap will carry homeomorphically onto a critical round gap of $L$. The leaves on the boundary of this subset are compatible with all laminations $L_i$ because they are pullbacks of the leaves of critical polygons of $L$, and they are forced to be pulled back in that way by the chained homeomorphisms.

	Form a function, $f_i$, from the set of critical round gaps of $L_i$ into the set of critical round gaps of $L_i$ such that $\sigma_d(R)$ has a subset that is equal to or carries homeomorphically onto $f(R)$. Form $L_{i+1}$ as follows: if a round gap has degree 1, then there is no choice. If a round gap, $R$ has a degree, $d$, then choose the round gap in its image that contains a subset that carries homeomorphically onto $f(R)$, and then pull back that gap $d$ to 1 into $R$. This process puts a round gap of the same degree inside each round gap. For consistency, we form $f_{i+1}$ based by substituting each round gap with the critical round gap inside of it in $L_{i+1}$. It is easy enough to see that this process is continuable and forms a sequence to the end of the pullback tree. We attempt to discern that the limit has the desired properties.

	Form a sequence, $\{R_i\}$ starting with a critical round gap of $L=L_n$ that is cyclic under $f_n$. Find each subsequent term of the sequence and at each term taking the critical round gap found inside of it. Since the change from $f_i$ to $f_{i+1}$ merely narrows each element of the domain and range, there is some number of steps $m$ such that $\forall i, \sigma^m_d(R_{i+m}) = R_i$, and $\sigma^m_d$ maps $R_i$ forward with some fixed degree $w$. Since the gaps are nested, they converge to a continuum in the disk. Its boundary must be the limit of the boundary's of the round gaps. Take a point from a circle arc of $R_i$. Take its $w$ preimages in the boundary of $R_{i+m}$. Note that since $\sigma_d^m$ maps these gaps as a covering map, the preimages are evenly spaced in the boundary of $R_{i+m}$. As we do this repeatedly, we find that the continuum is a gap of the end lamination having uncountably many points of the circle. Since all the original critical round gaps are cyclic under $f_n$, it is clear that they also become hyperbolic.

	If it does not seem obvious that this hyperbolic gap is critical, consider that each of the round gaps in the sequence is critical, say of degree $x$. And since the boundaries limit to the boundary, for all leaf or degenerate leaf in the boundary of the end gap, and for all $\epsilon$ there exists a leaf or degenerate leaf in the boundary a round gap within $\epsilon$. That point has siblings in the round gap. Thus, we can form a sequence of sets of $x$ sibling leaves in round gaps. It is clear from the increasing number of leaves in the boundary of the round gaps that these sets are of disjoint, far-apart leaves. The sequence of images of these leaves and degenerate leaves converges to the image of the chosen leaf or degenerate leaf in the boundary of the end gap. Since the preimage function of $\sigma_d$ is continuous, and since these sets of leaves and degenerate leaves remain far apart, the chosen leaf or degenerate leaf in the boundary of the end gap has a collection of $x$ leaves or degenerate leaves in the boundary of that gap.

\end{proof}
\bibliographystyle{ieeetran}
\bibliography{IEEEabrv,./fdl.bib}

% Generated by IEEEtran.bst, version: 1.14 (2015/08/26)
\begin{thebibliography}{10}
\providecommand{\url}[1]{#1}
\csname url@samestyle\endcsname
\providecommand{\newblock}{\relax}
\providecommand{\bibinfo}[2]{#2}
\providecommand{\BIBentrySTDinterwordspacing}{\spaceskip=0pt\relax}
\providecommand{\BIBentryALTinterwordstretchfactor}{4}
\providecommand{\BIBentryALTinterwordspacing}{\spaceskip=\fontdimen2\font plus
\BIBentryALTinterwordstretchfactor\fontdimen3\font minus
  \fontdimen4\font\relax}
\providecommand{\BIBforeignlanguage}[2]{{%
\expandafter\ifx\csname l@#1\endcsname\relax
\typeout{** WARNING: IEEEtran.bst: No hyphenation pattern has been}%
\typeout{** loaded for the language `#1'. Using the pattern for}%
\typeout{** the default language instead.}%
\else
\language=\csname l@#1\endcsname
\fi
#2}}
\providecommand{\BIBdecl}{\relax}
\BIBdecl

\bibitem{blokh2012laminationslanguageleaves}
\BIBentryALTinterwordspacing
A.~M. Blokh, D.~Mimbs, L.~G. Oversteegen, and K.~I. Valkenburg, ``Laminations
  in the language of leaves,'' Apr 2013. [Online]. Available:
  \url{https://arxiv.org/abs/1106.0273}
\BIBentrySTDinterwordspacing

\bibitem{Hilton_Sirna_2023}
\BIBentryALTinterwordspacing
F.~Hilton and T.~Sirna, ``Crash course in laminations of the unit disk
  \#some3,'' Aug 2023. [Online]. Available:
  \url{https://www.youtube.com/watch?v=KQwyqBw6JXo}
\BIBentrySTDinterwordspacing

\bibitem{Orsay}
\BIBentryALTinterwordspacing
A.~Douady and J.~H. Hubbard, \emph{\'Etude dynamique des polyn\^omes complexes.
  {P}artie {II} (aka. Exploring the Mandelbrot set. The Orsay Notes.)}, ser.
  Publications Math\'ematiques d'Orsay [Mathematical Publications of
  Orsay].\hskip 1em plus 0.5em minus 0.4em\relax Universit\'e{} de Paris-Sud,
  D\'epartement de Math\'ematiques, Orsay, 1985, vol. 85-4, with the
  collaboration of P. Lavaurs, Tan Lei and P. Sentenac. [Online]. Available:
  \url{https://pi.math.cornell.edu/~hubbard/OrsayEnglish.pdf}
\BIBentrySTDinterwordspacing

\bibitem{thu09}
W.~Thurston, ``Polynomial dynamics from combinatorics to topology,'' in
  \emph{Complex Dynamics: Families and Friends}, D.~Schleicher, Ed.\hskip 1em
  plus 0.5em minus 0.4em\relax Wellesley, MA: A K Peters, 2009, pp. 1--109.

\bibitem{alma994567553503176}
A.~F. Beardon, \emph{\BIBforeignlanguage{eng}{Iteration of rational
  functions}}, ser. Graduate texts in mathematics ; 132.\hskip 1em plus 0.5em
  minus 0.4em\relax New York: Springer-Verlag, 1991.

\bibitem{rusu2021raney}
\BIBentryALTinterwordspacing
I.~Rusu, ``Raney numbers, threshold sequences and motzkin-like paths,''
  \emph{Discrete Mathematics}, vol. 345, no.~11, p. 113065, 2022. [Online].
  Available:
  \url{https://www.sciencedirect.com/science/article/pii/S0012365X22002710}
\BIBentrySTDinterwordspacing

\bibitem{OEIS}
O.~F. Inc., ``The on-line encyclopedia of integer sequences,''
  \url{https://oeis.org}, 2024.

\bibitem{blokh2015combinatorialmandelbrotsetquotient}
\BIBentryALTinterwordspacing
A.~Blokh, L.~Oversteegen, R.~Ptacek, and V.~Timorin, ``The combinatorial
  mandelbrot set as the quotient of the space of geolaminations,''
  \emph{Dynamics and Numbers}, p. 37–62, 2016. [Online]. Available:
  \url{https://arxiv.org/abs/1503.00351}
\BIBentrySTDinterwordspacing

\bibitem{aziz2023fixedflowers}
\BIBentryALTinterwordspacing
M.~A. Aziz, B.~Burdette, and J.~Mayer, ``Fixed flowers,'' 2023. [Online].
  Available: \url{https://arxiv.org/abs/2307.15794}
\BIBentrySTDinterwordspacing

\bibitem{Falcione}
\BIBentryALTinterwordspacing
C.~Falcione, ``Csfalcione/lamination-builder: Web-based visualizer for building
  pullback laminations.'' [Online]. Available:
  \url{https://github.com/csfalcione/lamination-builder}
\BIBentrySTDinterwordspacing

\bibitem{bhattacharya2021unicriticallaminations}
\BIBentryALTinterwordspacing
S.~Bhattacharya, A.~Blokh, and D.~Schleicher, ``Unicritical laminations,''
  \emph{Fundamenta Mathematicae}, vol. 258, no.~1, p. 25–63, 07 2022, please
  refer to the UAB website for the correct version. [Online]. Available:
  \url{https://people.cas.uab.edu/~ablokh/uniclam-02-22-22-final.pdf}
\BIBentrySTDinterwordspacing

\bibitem{KIWI2004207}
\BIBentryALTinterwordspacing
J.~Kiwi, ``Real laminations and the topological dynamics of complex
  polynomials,'' \emph{Advances in Mathematics}, vol. 184, no.~2, pp. 207--267,
  2004. [Online]. Available:
  \url{https://www.sciencedirect.com/science/article/pii/S0001870803001440}
\BIBentrySTDinterwordspacing

\bibitem{ManimHilton}
\BIBentryALTinterwordspacing
F.~Hilton, ``Manim lamination builder.'' [Online]. Available:
  \url{https://github.com/ForrestHilton/manim-lamination-builder}
\BIBentrySTDinterwordspacing

\bibitem{schleicher1998}
\BIBentryALTinterwordspacing
D.~Schleicher, ``Rational parameter rays of the mandelbrot set,'' 1998.
  [Online]. Available: \url{https://arxiv.org/abs/math/9711213}
\BIBentrySTDinterwordspacing

\end{thebibliography}


% Generated by IEEEtran.bst, version: 1.14 (2015/08/26)
\begin{thebibliography}{1}
\providecommand{\url}[1]{#1}
\csname url@samestyle\endcsname
\providecommand{\newblock}{\relax}
\providecommand{\bibinfo}[2]{#2}
\providecommand{\BIBentrySTDinterwordspacing}{\spaceskip=0pt\relax}
\providecommand{\BIBentryALTinterwordstretchfactor}{4}
\providecommand{\BIBentryALTinterwordspacing}{\spaceskip=\fontdimen2\font plus
\BIBentryALTinterwordstretchfactor\fontdimen3\font minus \fontdimen4\font\relax}
\providecommand{\BIBforeignlanguage}[2]{{%
\expandafter\ifx\csname l@#1\endcsname\relax
\typeout{** WARNING: IEEEtran.bst: No hyphenation pattern has been}%
\typeout{** loaded for the language `#1'. Using the pattern for}%
\typeout{** the default language instead.}%
\else
\language=\csname l@#1\endcsname
\fi
#2}}
\providecommand{\BIBdecl}{\relax}
\BIBdecl

\bibitem{thu09}
W.~Thurston, ``Polynomial dynamics from combinatorics to topology,'' in \emph{Complex Dynamics: Families and Friends}, D.~Schleicher, Ed.\hskip 1em plus 0.5em minus 0.4em\relax Wellesley, MA: A K Peters, 2009, pp. 1--109.

\bibitem{FDL}
\BIBentryALTinterwordspacing
F.~Hilton, ``Finite dynamical laminations,'' 2024. [Online]. Available: \url{https://arxiv.org/abs/2408.01353}
\BIBentrySTDinterwordspacing

\bibitem{blokh2012laminationslanguageleaves}
\BIBentryALTinterwordspacing
A.~M. Blokh, D.~Mimbs, L.~G. Oversteegen, and K.~I. Valkenburg, ``Laminations in the language of leaves,'' Apr 2013. [Online]. Available: \url{https://arxiv.org/abs/1106.0273}
\BIBentrySTDinterwordspacing

\bibitem{alma994567553503176}
A.~F. Beardon, \emph{\BIBforeignlanguage{eng}{Iteration of rational functions}}, ser. Graduate texts in mathematics ; 132.\hskip 1em plus 0.5em minus 0.4em\relax New York: Springer-Verlag, 1991.

\end{thebibliography}
\end{document}